\documentclass[11pt,english,reqno]{amsart}

\usepackage{amsbsy}
\usepackage{amstext}
\usepackage{amsthm}
\usepackage{amsmath}
\setcounter{tocdepth}{2}
\synctex=-1
\usepackage{color}
\usepackage{upref}
\usepackage[numbers, sort&compress]{natbib} 
\usepackage{babel}
\usepackage{verbatim}
\usepackage{refstyle}
\usepackage{mathrsfs}
\usepackage{mathtools}
\usepackage{enumitem}
\usepackage[unicode=true,pdfusetitle, bookmarks=true,bookmarksnumbered=false,bookmarksopen=false, breaklinks=true,pdfborder={0 0 0}, pdfborderstyle={},backref=false,colorlinks=false]{hyperref}

\usepackage[capitalize]{cleveref}
\crefformat{equation}{(#2#1#3)}



\numberwithin{equation}{section}
\numberwithin{figure}{section}
\theoremstyle{plain}
\newtheorem{thm}{Theorem}[section]
\crefname{thm}{Theorem}{Theorems}
\Crefname{thm}{Theorem}{Theorems}
\theoremstyle{plain}
\newtheorem{conjecture}[thm]{Conjecture}
\theoremstyle{definition}

\theoremstyle{remark}
\newtheorem{rem}[thm]{Remark}
\theoremstyle{plain}
\newtheorem{lem}[thm]{Lemma}
\theoremstyle{plain}
\newtheorem{prop}[thm]{Proposition}
\theoremstyle{plain}
\newtheorem{cor}[thm]{Corollary}

\urlstyle{same}
\hypersetup{final}
\usepackage{mathrsfs}
\DeclareMathAlphabet{\mathscrbf}{OMS}{mdugm}{b}{n}

\begin{document}
\global\long\def\divg{\operatorname{div}}

\global\long\def\Hess{\operatorname{Hess}}

\global\long\def\Law{\operatorname{Law}}

\global\long\def\supp{\operatorname{supp}}

\global\long\def\spn{\operatorname{span}}

\global\long\def\tr{\operatorname{tr}}

\global\long\def\Re{\operatorname{Re}}

\global\long\def\Lip{\operatorname{Lip}}

\global\long\def\Im{\operatorname{Im}}

\global\long\def\dif{\mathrm{d}}

\global\long\def\e{\mathrm{e}}

\global\long\def\ii{\mathrm{i}}

\global\long\def\Dif{\mathrm{D}}

\global\long\def\eps{\varepsilon}

\global\long\def\Cov{\operatorname{Cov}}

\global\long\def\Var{\operatorname{Var}}

\global\long\def\sgn{\operatorname{sgn}}

\global\long\def\He{\operatorname{He}}

\title[Solution of the $(2+1)$-dimensional KPZ equation]{Constructing a solution of the $(2+1)$-dimensional KPZ equation}

\author{Sourav Chatterjee}

\address{\newline Department of Statistics \newline Stanford University\newline Sequoia Hall, 390 Serra Mall \newline Stanford, CA 94305\newline  \textup{\tt souravc@stanford.edu}}

\author{Alexander Dunlap}

\address{\newline Department of Mathematics \newline Stanford University\newline 450 Serra Mall, Building 380 \newline Stanford, CA 94305\newline \textup{\tt ajdunl2@stanford.edu}}

\thanks{S.~C.~was partially supported by NSF grant DMS-1608249}
\thanks{A.~D.~was partially supported by an NSF Graduate Research Fellowship.}
\keywords{KPZ equation, Cole--Hopf transformation, renormalization}
\subjclass[2010]{60H15, 81T15, 35R60}

\begin{abstract}
  The $(d+1)$-dimensional KPZ equation is the canonical model for the growth of rough $d$-dimensional random surfaces. A deep mathematical understanding of the KPZ equation for $d=1$ has been achieved in recent years, and the case $d\ge 3$ has also seen some progress. The most physically relevant case of $d=2$, however, is not very well-understood mathematically, largely due to the renormalization that is required: in the language of renormalization group analysis, the $d=2$ case is neither ultraviolet superrenormalizable like the $d=1$ case nor infrared superrenormalizable like the $d\ge 3$ case. Moreover, unlike in $d=1$, the Cole--Hopf transform is not directly usable in $d=2$ because solutions to the multiplicative stochastic heat equation are distributions rather than functions. In this article we show the existence of subsequential scaling limits as $\eps\to0$ of Cole--Hopf solutions of the $(2+1)$-dimensional KPZ equation with white noise mollified to spatial scale $\eps$ and nonlinearity multiplied by the vanishing factor $|\log\eps|^{-\frac{1}{2}}$. We also show that the scaling limits obtained in this way do not coincide with solutions to the linearized equation, meaning that the nonlinearity has a non-vanishing effect. We thus propose our scaling limit as a  notion of KPZ evolution in $2+1$ dimensions.
\end{abstract}

\maketitle

\section{Introduction}
\subsection{Main results}
We are interested in the \emph{space-time $(2+1)$-dimensional KPZ equation on the torus},
formally given by the stochastic PDE
\begin{equation}\label{eq:formalKPZ}
\partial_{t}h=\nu\Delta h+\frac{\lambda}{2}|\nabla h|^{2}+\sqrt{D}\dot{W},
\end{equation}
where $\nu$, $\lambda$
and $D$ are strictly positive parameters and $\dot{W}$ denotes a standard space-time white noise on the two-dimensional
torus~$\mathbf{T}^{2}=\mathbf{R}^{2}/\mathbf{Z}^{2}$. More precisely, we define $W$ to be a cylindrical Wiener process on $L^2(\mathbf{T}^2)$ whose covariance operator is the identity, as in \cite{DPZ92} or \cite{BC95}, and then $\dot{W}$ is its (distributional) derivative in time. Thus, formally we have
\[
 \mathbf{E}\dot{W}(t,x)\dot{W}(t',x')=\delta(t-t')\delta(x-x').
\]
(Throughout this manuscript, we will assume that all random variables are defined on some common probability space $(\Omega, \mathcal{F}, \mathbf{P})$, and $\mathbf{E}$ will denote expectation with respect to $\mathbf{P}$.) We will use notation of the form 
\[
\int_0^t \int f(s,y)W(\dif s\,\dif y)
\] 
for $f$ integrated against the white noise. Here and throughout, an integral without a specified domain of integration will denote integration over $\mathbf{T}^2$. 

This model of interface growth was originally
introduced by Kardar, Parisi and Zhang in~\cite{KPZ86} and has subsequently
 been the subject of intense study in the physics and mathematics
literatures, especially in $1+1$ dimensions. See \cite{QS15} for
a review of mathematical results in the $1+1$-dimensional case,~\cite{CT18} for an analysis of a related discrete model in
$2+1$ dimensions, \cite{BC95, BC98, Fen15, CSZ17, CSZ18} for results on the multiplicative
stochastic heat equation in $2+1$ dimensions with implications for
KPZ, and \cite{MU18} for some recent progress about the $(d+1)$-dimensional equation for $d\ge 3$. A more extensive discussion of the literature is given in \cref{subsec:litreview}. We discuss the paper \cite{CSZ17} further in \cref{sec:CH}.

Defining solutions to \cref{eq:formalKPZ} is a well-known challenge
in the theory of stochastic PDEs, because the roughness of the driving
noise $\dot{W}$ precludes the existence of solutions smooth enough
for the nonlinear term $|\nabla h|^{2}$ to have meaning. The usual
approach is to proceed by \emph{mollification} of the noise $\dot{W}$,
in space and sometimes also in time, and then attempting to take a
limit as the mollifier approaches a delta function. Implementing this
strategy requires some form of \emph{renormalization} --- subtracting
divergent counterterms and/or modifying the parameters of the equation --- 
in a manner that gives rise to a scaling limit as the mollifier approaches a delta function.

In this paper we propose a renormalization scheme for \cref{eq:formalKPZ}
in $2+1$ dimensions, and show that subsequential limits
of the solutions exist as the mollification is turned off. Moreover, we show that the limiting solutions are not the same as the the limiting solution to the same sequence of equations with no nonlinear term, so the nonlinearity has a non-vanishing effect.  In order
to state our main theorems, we need to introduce some notation. 
Let $\rho\in\mathcal{C}^{\infty}(\mathbf{R}^{2})$ be a positive even
function so that 
\[
\supp\rho\subset\biggl(-\frac{1}{2},\frac{1}{2}\biggr)^{2}
\]
and $\|\rho\|_{L^{1}}=1$, and define, for $\eps\in(0,1)$, $\rho^{\eps}(x)=\eps^{-2}\rho(\eps^{-1}x).$
Thus $\rho^\eps$ descends trivially to a function in $\mathcal{C}^{\infty}(\mathbf{T}^{2})$ by periodic extension,
which we will identify with $\rho^\eps$. We then define the mollified
white noise as the $\mathbf{T}^2$-convolution
\begin{equation}
W^{\eps}=\rho^{\eps}*W.\label{eq:mollification}
\end{equation}
Let $\widetilde{h}^\eps$ be a solution of \cref{eq:formalKPZ} with $\dot{W}$ replaced by $\dot{W}^\eps$. As we show in \cref{sec:CH} below, such a solution exists because of the spatial smoothness of  $W^\eps$. Our primary goal is to understand the behavior of $h^\eps$ as $\eps\to 0$. For the $(1+1)$-dimensional KPZ equation, this limiting behavior is well understood; simply subtracting off a deterministic function of $t$ and $\eps$ gives a nontrivial scaling limit. For the $(2+1)$-dimensional equation, we will show that a nontrivial scaling limit can be obtained if we renormalize the nonlinearity parameter $\lambda$  by replacing it with $\lambda |\log \eps|^{-\frac{1}{2}}$. To be precise, let $\widetilde{h}^\eps$ solve
\begin{equation}
\begin{cases}
\partial_{t}\widetilde{h}^{\eps}(t,x)=\nu\Delta\widetilde{h}^{\eps}(t,x)+\frac{1}{2}\lambda|\log\eps|^{-\frac{1}{2}}|\nabla\widetilde{h}^{\eps}(t,x)|^{2}&\\
\qquad \qquad \qquad +\sqrt{D}\dot{W}^{\eps}(t,x),& t>0,x\in\mathbf{T}^{2},\\
\widetilde{h}^{\eps}(0,x)=0, & x\in\mathbf{T}^{2},
\end{cases}\label{eq:htildePDE0}
\end{equation}
and then let 
\begin{align}\label{eq:hepsdef}
h^\eps(t,x) := \widetilde{h}^\eps(t,x) - \kappa^\eps(t),
\end{align}
where  $\kappa^\eps(t) := \mathbf{E}\widetilde{h}^\eps(t,x)$ is a deterministic quantity that depends only on $t$ and not on $x$.  We will say more about $\kappa^\eps(t)$ later.

As noted above, the roughness of the driving noise $\dot{W}$ means
that we do not expect the limits of $h^{\eps}$ as $\eps\to0$
to be smooth. In fact, unlike the one-dimensional case, the limits here are not even functions. Rather, the limits exist in spaces of distributions.
We will prove tightness in certain negative H\"older spaces which we
will introduce in \cref{sec:negholder}; see \cref{thm:maintheorem-quantitative}
in that section. Since we do not expect that the regularity we achieve there
is optimal, for now we state the following simpler corollary.
Recall the spaces of distributions
\[
\mathcal{D}'(\mathbf{T}^{2})=\mathcal{C}^{\infty}(\mathbf{T}^{2})^{*}
\]
and
\[
\mathcal{D}'(\mathbf{R}_{>0}\times\mathbf{T}^{2})=\mathcal{C}_{c}^{\infty}(\mathbf{R}_{>0}\times\mathbf{T}^{2})^{*},
\]
where the asterisks denote the Fr\'echet space duals. We note in particular
in the second definition that $\mathcal{C}_{c}^{\infty}(\mathbf{R}_{>0}\times\mathbf{T}^{2})$
means the space of smooth functions supported on a compact subset
of $\mathbf{R}_{>0}\times\mathbf{T}^{2}$, so in particular with support
bounded away from zero. The following theorem, which is our first main result,  establishes the existence of subsequential scaling limits. 
\begin{thm}
\label{thm:maintheorem0}
Let $h^\eps$ be defined as in $\cref{eq:hepsdef}$ above. There is a $\theta_{0}>0$ so that if 
\begin{equation}
 \frac{\lambda^2 D}{(2\nu)^{3}}\le \theta_0,\label{eq:condition}
\end{equation}
then the following hold. For any sequence $\eps_{n}\downarrow0$,
there is a subsequence $\eps_{k_{n}}$ and a $\mathcal{D}'(\mathbf{R}_{>0}\times\mathbf{T}^{2})$-valued random distribution $h$ such that $h^{\eps_{k_{n}}}\to h$ in law as $n\to \infty$. Moreover, for any sequence $\eps_{n}\downarrow0$ and any $t>0$, there is a subsequence $\eps_{k_{n}}$ and a 
$\mathcal{D}'(\mathbf{T}^{2})$-valued random distribution $h_t$ such that $h^{\eps_{k_n}}(t,\cdot)\to h_{t}$ in law as $n\to \infty$.
\end{thm}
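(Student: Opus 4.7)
The plan is to reduce \cref{thm:maintheorem0} to the quantitative tightness statement \cref{thm:maintheorem-quantitative}, which controls $h^\eps$ in a negative Hölder space. Since such spaces embed continuously into both $\mathcal{D}'(\mathbf{T}^{2})$ and $\mathcal{D}'(\mathbf{R}_{>0}\times\mathbf{T}^{2})$, tightness there yields subsequential convergence in law in the distributional topologies via Prokhorov's theorem. The substantive task is thus to prove tightness of $h^\eps$ (and of each $h^\eps(t,\cdot)$) in an appropriate negative Hölder space.

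The natural device is the Cole--Hopf transformation. Set $\beta^\eps := \lambda/(2\nu|\log\eps|^{1/2})$ and $u^\eps := \exp(\beta^\eps \widetilde{h}^\eps)$. An Itô computation (carried out in \cref{sec:CH}) shows that $u^\eps$ solves a linear mollified multiplicative stochastic heat equation driven by $\beta^\eps\sqrt{D}\,\dot{W}^\eps$, with a divergent Itô correction absorbed by $\kappa^\eps(t)$. Because the effective coupling $\beta^\eps\sqrt{D}$ is of order $|\log\eps|^{-1/2}$, this places the SHE in the subcritical weak-coupling regime studied by Caravenna--Sun--Zygouras~\cite{CSZ17}; condition \cref{eq:condition} is then the quantitative statement that the effective coupling lies strictly below the critical threshold identified there, and $\theta_0$ can be read off from that threshold.

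The plan then has three steps. First, derive uniform-in-$\eps$ positive and negative moment bounds, and two-point covariance estimates, for $u^\eps(t,x)$ using the Feynman--Kac representation as a renormalized polymer partition function together with the polynomial-chaos techniques of \cite{CSZ17}. Second, translate these into moment bounds on $h^\eps = (\beta^\eps)^{-1}\log u^\eps - \kappa^\eps(t)$, for instance by writing
\[
\log u^\eps - \log \mathbf{E} u^\eps = \int_0^1 \frac{u^\eps - \mathbf{E} u^\eps}{\mathbf{E} u^\eps + s(u^\eps - \mathbf{E} u^\eps)}\,\dif s
\]
and applying the negative moment control from Step~1 to bound the integrand. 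Third, combine spatial and temporal increment bounds of the form $\mathbf{E}|\langle h^\eps(t,\cdot) - h^\eps(s,\cdot),\phi\rangle|^p \le C_\phi |t-s|^\alpha$ with a Kolmogorov-type criterion to conclude tightness in negative Hölder norms.

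The main obstacle I anticipate is Step~2: the magnification factor $(\beta^\eps)^{-1} \sim |\log\eps|^{1/2}$ in front of $\log u^\eps$ means that any downward excursion of $u^\eps$ toward zero could produce uncontrollably large values of $h^\eps$. Sharp negative-moment control on $u^\eps$, and a fortiori on spatial minima of $u^\eps(t,\cdot)$ over $\mathbf{T}^{2}$, is therefore essential, and likely requires a careful analysis of the polymer partition function in the weak-disorder regime. Establishing that the fluctuations of $\log u^\eps$ genuinely sit on the scale $\beta^\eps$---so that $h^\eps$ is of order one after centering---is precisely what allows the Cole--Hopf method to yield a nontrivial nonlinear scaling limit here, rather than a trivial Gaussian object.
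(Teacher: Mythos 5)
Your reduction to a tightness statement in negative H\"older spaces, your use of the Cole--Hopf transformation and Feynman--Kac representation, and your identification of the condition \cref{eq:condition} as a weak-disorder/subcriticality threshold in the spirit of \cite{CSZ17} all match the paper's framework. However, the route you propose for the substantive moment bound has a gap at exactly the place you flag. You want to bound moments of $u^\eps$ (positive and negative) and then convert to moments of $h^\eps = (\beta^\eps)^{-1}\log u^\eps - \kappa^\eps$ via the integral-remainder identity for $\log u^\eps - \log\mathbf{E}u^\eps$; but since $(\beta^\eps)^{-1}\sim|\log\eps|^{1/2}$ diverges, you would need negative-moment control on $u^\eps$ (indeed on spatial infima of $u^\eps$) that is sharp enough to beat a diverging logarithmic prefactor. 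You correctly identify this as the main obstacle, but you do not actually overcome it, and I don't see how the polynomial-chaos estimates of \cite{CSZ17}, which control moments of $u^\eps$ itself, would supply the required lower-tail control on $\log u^\eps$ at the $|\log\eps|^{-1/2}$ scale. The paper explicitly notes that relating the \cite{CSZ17} results (which concern $u^\eps-1$) to the logarithm is unclear precisely because the limiting object is a distribution, not a function.

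The paper's proof circumvents the negative-moment issue entirely by applying the Gaussian Poincar\'e inequality directly to $\int h^\eps_\theta(t,x)\psi(x)\,\dif x$ (see \cref{lem:varcomp}). The point is that the Malliavin derivative of the logarithm of the partition function is a \emph{tilted expectation} of a bounded kernel,
\[
\Dif_{s,y}h^\eps_\theta(t,x) = \theta^{1/2}\widehat{\mathbb{E}}^{\theta,W,\eps}_{X^{t,x}}\rho^\eps(X(s)-y),
\]
with the diverging $|\log\eps|^{1/2}$ prefactor of $\log u^\eps$ exactly cancelling the $|\log\eps|^{-1/2}$ in the Feynman--Kac exponent; one never needs to control $u^\eps$ near zero. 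This reduces the variance bound to controlling $\mathbf{E}\,\widehat{\mathbb{E}}^{\theta,W,\eps}_{X_1^{t,x_1},X_2^{t,x_2}}\mathscr{I}_t^\eps[X_1,X_2]$, the expected intersection time of two Brownian motions under a randomly tilted measure. Controlling \emph{that} quantity is the technical heart of the paper (the Taylor-like expansion of \cref{sec:Taylorbound}, where iterated $\theta$-derivatives threaten an $(r!)^2$ blowup in the number of terms and are collapsed via Young's inequality into the hierarchical bound $|G'_{r,\mathscr{Q}}|\le C\,G_{r+1,|\mathscr{Q}|}$), and it has no analogue in your outline. So your proposal is a reasonable opening reduction, but it stops short at the two steps where the real work happens: avoiding the logarithm's singularity (Poincar\'e/Malliavin instead of negative moments), and summing the resulting perturbative series in $\theta$ uniformly in $\eps$.
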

\begin{rem}
 Since the space of distributions is not metrizable, the usual Portmanteau lemma for weak convergence of measures does not apply. The notion of weak convergence being used in \cref{thm:maintheorem0} is that of convergence of the expectation of every bounded continuous functional, and the $\sigma$-algebra used to define the notion of random distributions is the $\sigma$-algebra generated by the weak-* topology. However, the convergence actually takes place in a local H\"older space with negative regularity exponent, which is a Fr\'echet space. We postpone the stronger statement (\cref{thm:maintheorem-quantitative}) until after we have introduced the necessary definitions of negative H\"older regularity in \cref{sec:negholder}.
\end{rem}
\begin{rem}
The quantity appearing on the left side of \cref{eq:condition} is called the {\it effective coupling constant} in the physics literature on the renormalization group for the KPZ equation~\cite{CCDW10}. It measures how strongly the nonlinearity is coupled to the linear system. In physics  terminology, \cref{thm:maintheorem0} would be called a weak coupling result.
\end{rem}
\begin{rem}
It is important to understand whether the condition \cref{eq:condition}  is necessary in the statement of \cref{thm:maintheorem0}. In light of recent results about the $(2+1)$-dimensional stochastic heat equation with multiplicative noise~\cite{Fen15, CSZ17, CSZ18}, we believe  that there are values of $\nu$, $\lambda$ and $D$ for which \cref{thm:maintheorem0} is not valid (specifically, when the effective coupling constant is large). Understanding this is at present out of the reach of the technology developed in this paper.
\end{rem}
\begin{rem}The two subsequential convergences stated in \cref{thm:maintheorem0} could be unified into a single statement if we could show that the convergence holds in some space of continuous maps from $\mathbf{R}_{\ge 0}$ into $\mathcal{D}'(\mathbf{T}^2)$, endowed with some topology that is strong enough to at least render the pointwise evaluation maps continuous. We expect this, but are currently unable to prove it.
\end{rem}

We are at this point unable to show that the subsequential scaling limits are unique. Seeing
no reason for them not to be unique, however, we state the following conjecture.
\begin{conjecture}
Under the condition $\cref{eq:condition}$, the sequences $(h^\eps)_{\eps>0}$ and $(h^\eps(t,\cdot))_{\eps>0}$ converge in law as $\eps\to 0$.
\end{conjecture}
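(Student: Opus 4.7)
The conjecture asserts uniqueness of the subsequential limits already produced by \cref{thm:maintheorem0}. Since tightness is in hand, the task reduces to characterizing the limit. The natural first move is to exploit the Cole--Hopf transform that underlies the construction: formally, $\widetilde{u}^\eps := \exp\bigl(\tfrac{\lambda |\log\eps|^{-1/2}}{2\nu}\widetilde{h}^\eps\bigr)$ solves a mollified $(2+1)$-dimensional multiplicative stochastic heat equation with effective disorder strength tuned exactly into the subcritical intermediate-disorder regime studied in \cite{CSZ17}. In that regime, $\widetilde{u}^\eps$ (after the appropriate Wick-type normalization that is already absorbed in the subtraction of $\kappa^\eps$) is known to converge in law to a unique limiting random distribution $U$. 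So the plan is: (i) identify the joint limit of $(\widetilde{u}^\eps,h^\eps)$ up to subsequences using tightness from \cref{thm:maintheorem0} together with the $L^2$-type estimates that tightness for $\widetilde{u}^\eps$ is based on in \cite{CSZ17}; (ii) show that $h$ is measurable with respect to $U$; and (iii) conclude $\mathrm{Law}(h)$ is uniquely determined by $\mathrm{Law}(U)$, which is itself uniquely determined.

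Step (ii) is where the real work lies. Since $U$ is a genuine distribution, one cannot simply take a logarithm. Instead, I would try to recover $h$ from $U$ by a regularization--truncation procedure: for a smooth test function $\varphi$ and a small parameter $\delta>0$, approximate
\begin{equation*}
\langle h^\eps,\varphi\rangle \;\approx\; \frac{2\nu}{\lambda |\log\eps|^{-1/2}}\bigl\langle \log_\delta (\widetilde{u}^\eps *\psi_\delta),\varphi\bigr\rangle - \langle \kappa^\eps,\varphi\rangle,
\end{equation*}
where $\psi_\delta$ is a mollifier and $\log_\delta$ is a suitable regularized logarithm. The two limits $\eps\to 0$ and $\delta\to 0$ must be interchanged; controlling the error as $\delta\to 0$ requires quantitative lower bounds on $\widetilde{u}^\eps *\psi_\delta$ that survive the limit, which in the subcritical regime one may hope to extract from the lognormal-type structure of $U$ established in \cite{CSZ17, CSZ18}. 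If such a commutation diagram can be closed, then every subsequential limit $h$ is a deterministic functional of $U$, and since $\mathrm{Law}(U)$ does not depend on the subsequence, neither does $\mathrm{Law}(h)$.

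An alternative route is to characterize $h$ as the unique solution of a martingale problem: derive the limiting dynamics of $\langle h,\varphi\rangle$ with the Wick-renormalized $|\nabla h|^2$ term, interpreted by extracting its mean behaviour (which is precisely what generates the counterterm $\kappa^\eps$), and prove well-posedness of this martingale problem using paracontrolled or energy-solution techniques in the spirit of Gon\c{c}alves--Jara. This would avoid the inversion of Cole--Hopf entirely, at the cost of building an intrinsic notion of solution for the $(2+1)$-dimensional KPZ equation in the weak coupling scaling.

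The principal obstacle, in either approach, is the lack of pointwise positivity for the limiting object $U$ together with its distributional nature: standard solution theories for singular SPDEs (regularity structures, paracontrolled calculus) are not known to apply at the critical logarithmic weakening used here, and the CSZ framework controls $U$ only at the level of its joint moments, not pathwise. A successful proof would likely require either (a) pathwise strict positivity and lower-tail estimates for $\widetilde{u}^\eps$ uniform in $\eps$, or (b) a new uniqueness theorem for a singular limiting dynamics of $h$ in which the nonlinear term enters only through a renormalized Gaussian-multiplicative-chaos-type object inherited from $U$. This is presumably why the authors leave the statement as a conjecture.
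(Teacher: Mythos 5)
The statement you were asked to prove is labeled a \emph{conjecture} in the paper, and the authors say explicitly, just before stating it, ``We are at this point unable to show that the subsequential scaling limits are unique.'' There is therefore no proof in the paper to compare against, and your writeup is, correctly, not a proof either: it is a candidate strategy followed by an honest account of why it does not close. That self-assessment is accurate, and the central obstacle you name is exactly the one the paper itself identifies in its literature discussion: the solution of the $(2+1)$-dimensional multiplicative stochastic heat equation is a distribution rather than a function, so one ``does not know how to take logarithms of distributions.'' Your step (ii) --- recovering $h$ from $U$ by a mollify-then-take-a-regularized-log construction and then commuting the $\eps\to0$ and $\delta\to0$ limits --- is precisely where one would need a new idea.

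One overclaim worth flagging: you write that by \cite{CSZ17} the Cole--Hopf field $\widetilde{u}^\eps$ ``is known to converge in law to a unique limiting random distribution $U$.'' What \cite[Theorem 2.17]{CSZ17} actually establishes in the subcritical window is that $|\log\eps|^{1/2}(\widetilde{u}^\eps_\theta - 1)$, averaged against a macroscopic test function, converges to a (Gaussian) random variable; the paper you are reading discusses this at the start of \cref{sec:CH} and points out that ``because the limiting random field is a distribution rather than a function, it is not clear how to relate these results'' to $h^\eps$. So the input you want for step (i) --- a field-level unique limit $U$ with enough structure (lognormal-type lower bounds, pathwise positivity of mollifications) to survive the $\delta\to 0$ limit --- is itself not available in the cited literature. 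Your alternative route via a martingale/energy-solution formulation is also a reasonable direction to record, but well-posedness of such a formulation at the critical logarithmic attenuation is likewise open, as you note. In short: the problem is genuinely open, your outline is consistent with the reasons the authors give for leaving it open, and the one inaccuracy is the strength of what \cite{CSZ17} delivers as a black box.
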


In order to defend our choice of scaling for the nonlinearity parameter in \cref{eq:htildePDE0} as
an interesting notion of the KPZ evolution in $2+1$ dimensions, we
need to show that the limits we establish in \cref{thm:maintheorem0}
exhibit a non-vanishing effect of the nonlinearity. Indeed, \emph{a priori}, we might worry that the coefficient
$|\log\eps|^{-\frac{1}{2}}$ is going to zero so quickly
with $\eps$ that any subsequential scaling limit $h$ has the same law as a solution of the additive
stochastic heat evolution with the same noise strength and diffusivity,
given by taking $\lambda=0$ in \cref{eq:formalKPZ}. By this we mean the distribution
$v$ solving the problem
\begin{equation}
\begin{cases}
\partial_{t}v(t,x)=\nu\Delta v(t,x)+\sqrt{D}\dot{W}(t,x), & t>0,x\in\mathbf{T}^{2},\\
v(0,x)=0, & x\in\mathbf{T}^{2}.
\end{cases}\label{eq:htildePDE-10}
\end{equation}
(Solutions to \cref{eq:htildePDE-10} are given by the Green's function of the heat equation convolved with the white noise.)
In our second theorem, stated below, we prove that this does not happen. The theorem further shows that the scaling limit is not a constant field, nor is it a constant shift of the solution of \cref{eq:htildePDE-10}.
\begin{thm}
\label{thm:limitsnontrivial0}
Take any $t>0$. Suppose that $h^\eps(t,\cdot)$ converges in law to some limit $h_t$ through a subsequence as $\eps\to 0$. Let $v$ be a solution of  $($\ref{eq:htildePDE-10}$)$. Then $\int h_t(x)\,\dif x$ and  $\int v(t,x)\,\dif x$ are both non-degenerate random variables with mean zero, but their laws are different.
\end{thm}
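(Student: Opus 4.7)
The plan is to reduce both spatial integrals to explicit one-dimensional random variables and then compare them via second moments. Integrating \cref{eq:htildePDE0} over $\mathbf{T}^{2}$, using that $\int\Delta\widetilde{h}^{\eps}\,\dif x=0$ and that $\int_{\mathbf{T}^{2}}\rho^{\eps}(x-y)\,\dif x=1$ (so the spatial average of $\dot{W}^{\eps}$ coincides with that of $\dot{W}$), yields the exact identity
\begin{equation*}
\int\widetilde{h}^{\eps}(t,x)\,\dif x=M^{\eps}+\sqrt{D}\beta_{t},\qquad M^{\eps}:=\frac{\lambda}{2}|\log\eps|^{-1/2}\int_{0}^{t}\int|\nabla\widetilde{h}^{\eps}(s,x)|^{2}\,\dif x\,\dif s,
\end{equation*}
where $\beta_{t}:=\int_{0}^{t}\int W(\dif s\,\dif y)$ is a standard one-dimensional Brownian motion. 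Translation invariance of the equation in $x$ makes $\kappa^{\eps}(t)$ independent of $x$, hence $\kappa^{\eps}(t)=\mathbf{E}M^{\eps}$, and therefore
\begin{equation*}
Z^{\eps}:=\int h^{\eps}(t,x)\,\dif x=\bigl(M^{\eps}-\mathbf{E}M^{\eps}\bigr)+\sqrt{D}\beta_{t}.
\end{equation*}
The same spatial integration applied to \cref{eq:htildePDE-10} gives $\int v(t,x)\,\dif x=\sqrt{D}\beta_{t}\sim\mathcal{N}(0,Dt)$, with the \emph{same} Brownian motion $\beta_{t}$. In particular $\mathbf{E}Z^{\eps}=0$, so both $\int h_{t}\,\dif x$ and $\int v(t,\cdot)\,\dif x$ are centered.

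The next step is the second-moment computation
\begin{equation*}
\mathbf{E}|Z^{\eps}|^{2}=\Var(M^{\eps})+2\sqrt{D}\Cov(M^{\eps},\beta_{t})+Dt.
\end{equation*}
I would establish two limits: first, $\Cov(M^{\eps},\beta_{t})\to 0$; second, $\Var(M^{\eps})\to c$ for some $c=c(\lambda,\nu,D,t)>0$. Combined with uniform integrability of $(Z^{\eps})$ in $L^{2}$ (which should follow from the estimates in \cref{thm:maintheorem-quantitative} together with hypercontractivity on the Wiener-chaos decomposition of $M^{\eps}$), one obtains $\mathbf{E}|\int h_{t}\,\dif x|^{2}=c+Dt$. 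This strictly exceeds $Dt=\mathbf{E}|\int v(t,\cdot)\,\dif x|^{2}$, so the laws differ; and it is nonzero, so $\int h_{t}\,\dif x$ is non-degenerate. (Equivalently, writing $\int h_{t}\,\dif x=Y+\sqrt{D}\beta_{t}$ with $Y$ the limit of $M^{\eps}-\mathbf{E}M^{\eps}$, asymptotic orthogonality of $Y$ and $\beta_{t}$ together with $\Var(Y)=c>0$ gives both conclusions at once.)

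Both limits are accessible by linearizing around the additive SHE $v^{\eps}$ driven by $\sqrt{D}\dot{W}^{\eps}$. The linearized substitute $M^{\eps}_{\mathrm{lin}}$ is a second Wiener-chaos functional of the noise, hence exactly orthogonal to the first-chaos $\beta_{t}$; and Wick's theorem gives
\begin{equation*}
\Var(M^{\eps}_{\mathrm{lin}})=\frac{\lambda^{2}}{2|\log\eps|}\int_{[0,t]^{2}}\int_{(\mathbf{T}^{2})^{2}}\sum_{i,j=1}^{2}\bigl(\mathbf{E}\,\partial_{i}v^{\eps}(s,x)\partial_{j}v^{\eps}(s',y)\bigr)^{2}\,\dif x\,\dif y\,\dif s\,\dif s'.
\end{equation*}
A short Fourier computation on the heat kernel shows that the quadruple integral grows like $|\log\eps|$, so the $|\log\eps|^{-1}$ prefactor produces a strictly positive limit of the form $c\,\lambda^{2}D^{2}t/\nu^{3}$. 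The \emph{main obstacle} is controlling the replacement $\widetilde{h}^{\eps}\rightsquigarrow v^{\eps}$ in this Gaussian calculation: the gradients $\nabla v^{\eps}$ already have $L^{2}$-norms of order $|\log\eps|^{1/2}$, so a naive perturbation bound loses exactly the logarithm we are trying to keep. The natural workaround is the Cole--Hopf transform $u^{\eps}=\exp(\tfrac{\lambda|\log\eps|^{-1/2}}{2\nu}\widetilde{h}^{\eps})$ used elsewhere in the paper: it converts the gradient-squared expression into a functional of the linear multiplicative SHE, whose second-moment asymptotics in the subcritical regime are accessible from the same chaos expansions that underlie the proof of \cref{thm:maintheorem0}, and those asymptotics are precisely what is needed to close the computation.
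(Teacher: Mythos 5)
Your starting decomposition is essentially the paper's: writing $\int h^\eps(t,x)\,\dif x=(M^\eps-\mathbf{E}M^\eps)+\sqrt{D}\beta_t$, where $\beta_t$ is the zero Fourier mode of the noise, is the same idea as the paper's split $h^\eps_\theta=f^\eps_\theta+\widehat{v_\theta}(\cdot,0)$. One small but useful point you miss: you do not need to prove $\Cov(M^\eps,\beta_t)\to0$; it is \emph{exactly} zero. The zero mode $W_t[1]$ of the noise is constant in space, so (as the paper observes around \cref{eq:htildesplitup}--\cref{eq:ffk}) it enters $\widetilde{h}^\eps$ purely additively as a function of $t$; in particular $\nabla\widetilde{h}^\eps$, hence $M^\eps$, depends only on $\widetilde{W}=W-W_t[1]$ and is independent of $\beta_t$. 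Exact independence is what powers the paper's reduction (\cref{lem:reduction}).

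The genuine gap is in the remainder of the plan. You want to establish $\Var(M^\eps)\to c>0$ and then transfer this to the subsequential limit via uniform integrability of $|Z^\eps|^2$. Both halves are out of reach with the paper's estimates. For the first: the paper never shows that $\Var(M^\eps)$ (equivalently $\Var(\int f^\eps_\theta)$) converges — it only establishes upper bounds and a lower bound on a piece of the variance. For the second: uniform integrability of $|Z^\eps|^2$ amounts to a uniform $L^p$ bound, $p>2$, on $\int h^\eps(t,\cdot)\,\dif x$; the paper is explicitly limited to $p=2$ (this is why \cref{thm:maintheorem-quantitative} gives only regularity $-1-\delta$, and why \cref{conj:betterregularity} is a conjecture). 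Your suggestion to get it from ``hypercontractivity on the Wiener-chaos decomposition of $M^\eps$'' does not apply: hypercontractivity controls $L^p$ norms on a \emph{fixed} chaos, and $M^\eps$ is spread over all chaoses with no a priori geometric decay of the chaos $L^2$-norms. Without UI, weak convergence plus Fatou only gives $\mathbf{E}|\int h_t|^2\le\liminf\mathbf{E}|Z^\eps|^2$ — the wrong direction.

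The paper's escape is precisely calibrated to avoid needing either ingredient. It projects $\int f^\eps_\theta$ onto an explicit orthonormal family of second-chaos elements $\{W_1[\e_{k,\ell}]\overline{W_1[\e_{k,\ell}]}-1\}$, computes the coefficients $a^\eps_{\theta;k,\ell}$ via Malliavin integration by parts, and shows (using \cref{prop:simpleUB}, \cref{lem:Mp}) that the sum of their squares is bounded \emph{below} uniformly in $\eps$. The projection $A^\eps_\theta$ lives in a fixed chaos, so hypercontractivity \emph{does} give its $L^p$ bound; the complementary piece $B^\eps_\theta$ only needs an $L^2$ bound, which is available. The elementary \cref{lem:nontrivialitycondition} is then exactly tailored to these asymmetric moment hypotheses. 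Your acknowledged ``main obstacle'' — controlling the replacement $\widetilde{h}^\eps\rightsquigarrow v^\eps$ in the variance computation — is also where the paper's machinery does real work: it is handled not by a Cole--Hopf linearization heuristic but by the quantitative Taylor-in-$\theta$ bound of \cref{prop:simpleUB}, which controls the $\theta$-derivative of the relevant tilted expectations uniformly in $\eps$. As written, your proposal names the right objects but does not close either the moment-transfer step or the perturbative error estimate.
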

Since the nonlinearity has a nontrivial effect when the nonlinearity parameter is scaled like a multiple of $|\log \eps|^{-\frac{1}{2}}$, it seems unlikely and unnatural that a nontrivial scaling limit can be obtained by some other (faster or slower) scaling of the nonlinearity parameter. With this intuition in mind, we make the following conjecture.
\begin{conjecture}
The only way to obtain a nontrivial scaling limit in $\cref{eq:formalKPZ}$ with $\nu$ and $D$ fixed is to scale the nonlinearity parameter $\lambda$ like a multiple of $|\log \eps|^{-\frac{1}{2}}$, as we did in $\cref{eq:htildePDE0}$.
\end{conjecture}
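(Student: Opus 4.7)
The plan is to use the Cole--Hopf representation at the heart of the paper and reduce the question to known or conjectured behavior of the two-dimensional multiplicative stochastic heat equation (SHE), splitting into two regimes according to whether the nonlinearity parameter decays faster or slower than $|\log\eps|^{-1/2}$. Given any positive sequence $\lambda_\eps$ (possibly not tending to zero), replace $\lambda|\log\eps|^{-1/2}$ in \cref{eq:htildePDE0} by $\lambda_\eps$, set $\beta_\eps:=\lambda_\eps\sqrt{D}/(2\nu)$, and define $Z^\eps(t,x):=\exp\bigl(\tfrac{\lambda_\eps}{2\nu}\widetilde h^\eps(t,x)\bigr)$. As in \cref{sec:CH}, $Z^\eps$ is the renormalized solution of the mollified SHE with coupling $\beta_\eps$, and $h^\eps=(2\nu/\lambda_\eps)\log Z^\eps-\kappa^\eps$. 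Expanding $Z^\eps=\sum_{k\ge 0}Z^\eps_k$ in Wiener chaos, the 2d heat-kernel collision estimates of \cite{Fen15, CSZ17} give $\|Z^\eps_k\|_{L^2}\asymp (c\beta_\eps^2|\log\eps|)^{k/2}/\sqrt{k!}$, so the effective dimensionless parameter is $u_\eps:=\beta_\eps^2|\log\eps|$, of order $1$ exactly when $\lambda_\eps\asymp|\log\eps|^{-1/2}$.

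In the fast-decay regime $u_\eps\to 0$ (i.e., $\lambda_\eps=o(|\log\eps|^{-1/2})$), one has $Z^\eps\to 1$ in $L^2$, and the only term in $Z^\eps-1$ that survives after dividing by $\lambda_\eps$ is the first chaos $Z^\eps_1$, a linear Gaussian integral of $\dot W$ against the mollified heat kernel. A Taylor expansion $\log Z^\eps=(Z^\eps-1)-\tfrac12(Z^\eps-1)^2+\cdots$, combined with the quantitative bounds on higher chaoses, would show that $h^\eps\to v$ in probability in the negative H\"older topology of \cref{thm:maintheorem-quantitative}, where $v$ solves the linear equation \cref{eq:htildePDE-10}. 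By \cref{thm:limitsnontrivial0}, the limit in this regime differs in law from the nontrivial limit at the critical scaling; hence any subsequential limit obtained from $\lambda_\eps=o(|\log\eps|^{-1/2})$ is "trivial" in that the nonlinearity has vanishing effect.

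In the slow-decay regime $u_\eps\to\infty$ (including $\lambda_\eps\not\to 0$), one invokes the critical- and supercritical-window results on the 2d SHE of \cite{CSZ17, CSZ18} together with their subsequent refinements: the renormalized $Z^\eps$ is expected (and in substantial subregimes known) to collapse to $0$ in probability while $|\log Z^\eps-\mathbf{E}\log Z^\eps|$ develops fluctuations of order much larger than $\lambda_\eps^{-1}$. One-sided concentration bounds of the form $\mathbf{P}(|\log Z^\eps(t,x)|\ge M\lambda_\eps^{-1})\to 1$ for every fixed $M$ would then force $h^\eps$ to fail tightness when tested against any smooth compactly supported function, ruling out subsequential scaling limits in $\mathcal{D}'(\mathbf{R}_{>0}\times\mathbf{T}^2)$. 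Together with the fast-decay step, this would pin $|\log\eps|^{-1/2}$ (up to a multiplicative constant) as the unique scaling producing a nontrivial limit, which is the content of the conjecture.

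The main obstacle is precisely the slow-decay step. The two-dimensional SHE is only partially understood at and above the critical window, and making the heuristic "$\log Z^\eps$ has fluctuations of order much larger than $\lambda_\eps^{-1}$" rigorous for \emph{every} slower-than-critical $\lambda_\eps$ appears to require sharp two-sided control on the supercritical partition function that is not currently available; in fact such control would resolve open problems about the 2d SHE. The fast-decay step, by contrast, reduces to standard chaos estimates and should be tractable with the machinery already developed in the paper; the genuine difficulty, and the reason the statement is only conjectural, lies entirely on the supercritical side.
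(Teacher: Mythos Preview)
This statement is a \emph{conjecture} in the paper, not a theorem; the paper offers no proof. What the paper does offer is a single sentence of motivation immediately following the statement: ``There is some indirect evidence for this conjecture from the existing analysis of the $(2+1)$-dimensional stochastic heat equation with multiplicative noise.'' Your proposal is therefore not being compared against a proof but against this informal remark.

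Your outline is consistent with that remark: you invoke the Cole--Hopf link to the multiplicative SHE and the chaos expansion results of \cite{Fen15,CSZ17,CSZ18} to argue that the effective parameter $\beta_\eps^2|\log\eps|$ governs the dichotomy, which is exactly the ``indirect evidence'' the paper alludes to. The fast-decay half of your argument (limit collapses to the additive SHE solution $v$) is plausible and, as you say, should be accessible with existing chaos estimates. You also correctly identify that the slow-decay half is the genuine obstruction and that resolving it would require control on the supercritical 2d SHE that is not presently available---which is precisely why the paper states this as a conjecture rather than a theorem.

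In short: there is no proof in the paper to match, your heuristic is well aligned with the authors' stated intuition, and your own final paragraph accurately diagnoses why the argument cannot currently be completed.
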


There is some indirect evidence for this conjecture from the existing analysis of the $(2+1)$-dimensional stochastic heat equation with multiplicative noise. We discuss this and other connections with the literature below.

\begin{rem}\label{rem:otherscalings}
It is possible that more general scalings of the parameters (that is, with $\nu$ and/or $D$ allowed to vary with $\eps$) could lead  to other scaling limits. Numerical simulations such as \cite{KO11,RMO14} suggest that it may also be possible to obtain a \emph{function}-valued scaling limit by taking (in what is known in physics as the \emph{Family--Vicsek scaling}~\cite{FV91}) $\nu\sim\eps^{2-z}$, $\lambda\sim\eps^{2-z-\alpha}$, and $D\sim\eps^{2+2\alpha-z}$ for certain particular exponents $\alpha$ and $z$, which scaling arguments based on Galilean invariance~\cite{BS95} suggest should satisfy $\alpha+z=2$. If we assume this, then we obtain the scaling $\nu\sim\eps^\alpha$, $\lambda\sim 1$, and $D\sim\eps^{3\alpha}$. By an analogue of the change of variables described in \cref{sec:onepara} below, this amounts to considering \cref{eq:formalKPZ} with \emph{fixed} values of the parameters, and considering the solution multiplied by $\eps^\alpha$ on a short time scale $t\sim\eps^{\alpha}$. We do not consider this setting further in this paper.
 \end{rem}

\subsection{Comparison with the literature}\label{subsec:litreview}
The literature surrounding the KPZ equation has grown exponentially in the last few years, and keeps growing each day. It is quite impossible to review (even briefly) all of the developments within one section of a paper. Here we only survey the part of the literature that is closest to this paper, and compare our results with the existing ones. 

As mentioned before, our scaling of the nonlinearity parameter is in contrast to the 
results of \cite{BC14, BG97,ACQ11,Hai13,GP17,K17} in $1+1$ dimensions, in
which there is a diverging renormalization constant $\kappa^{\eps}(t)\sim\eps^{-1}t$,
but the parameters $\nu,\lambda, D$ are all kept fixed as $\eps\to0$. The limit object is an actual random function, not a distribution. In fact, it is the logarithm of a solution of the stochastic heat equation (SHE) with multiplicative noise. The main difficulty about extending this approach to $2+1$ dimensions is that the solutions of the $(2+1)$-dimensional SHE with multiplicative noise (which are now fairly well-understood, thanks to~\cite{BC95, BC98, Fen15, CSZ17, CSZ18}) are random distributions instead of random functions, and we do not know how to take logarithms of distributions.

The KPZ equation in the $(1+1)$-dimensional case is, in the mathematician's language, \emph{locally subcritical}~\cite{H14}, or, in the physicist's language, \emph{ultraviolet superrenormalizable}~\cite{MU18}, meaning that when a parabolic
scaling is applied to \cref{eq:formalKPZ}, the coefficient in front
of the nonlinearity disappears compared to that of the noise and the
Laplacian on very small scales.  A similar phenomenon, known as \emph{infrared superrenormalizability}~\cite{MU18}, happens under a different rescaling for the $(d+1)$-dimensional KPZ equation when $d\ge 3$. 

In contrast, the $(2+1)$-dimensional case is critical in that any rescaling leaves the nonlinearity with a non-vanishing coefficient. In physics language, the $(2+1)$-dimensional KPZ equation is neither ultraviolet nor infrared superrenormalizable. The lack of superrenormalizability means that the methods of regularity
structures~\cite{Hai13,H14}, paracontrolled distributions~\cite{GIP15} and constructive field theory~\cite{MU18}   
do not apply to the $(2+1)$-dimensional KPZ equation. Our results show that it is, however, \emph{renormalizable} if we reduce the strength of the nonlinearity logarithmically as the mollification is sent to zero.

Such a logarithmic scaling has a famous precedent in the Nobel prize-winning papers~\cite{GW73, P73}, where it was shown that for the renormalization of four-dimensional non-Abelian gauge theories, the bare coupling constant should vanish logarithmically as the ultraviolet cutoff is removed (see \cite{MW00} for a friendly explanation). In that case, as in ours,  a na\"ive dimension counting argument suggests that the coupling constant should not be scaled at all, but this is shown to be wrong by renormalization group analysis. This is very similar in spirit to our results for the $(2+1)$-dimensional KPZ equation. In fact, the scaling is also exactly the same, namely, if $\eps$ is the lattice spacing in a lattice regularized gauge theory, then the coupling constant should scale like a multiple of $|\log \eps|^{-\frac{1}{2}}$ as $\eps\to 0$. The results for gauge theories, however, have not yet been made mathematically rigorous.

Our mollification of the noise in \cref{eq:mollification} is only spatial,
so the noise remains white in time in our approximation scheme. This
mollification scheme has been used in the past when using the Cole--Hopf
transform in $1+1$ dimensions \cite{BG97,Hai13}. On the other hand,
it is certainly not the only physically relevant mollification scheme;
for example, one could use a space-time mollification as \cite{MU18}
does in three space dimensions --- see the next paragraph. Because of
the roughness of the problem, there is no reason to expect that different
approximation schemes lead to the same scaling limits. (This is true
even in stochastic ordinary differential equations, in which different
approximation schemes can lead to the difference between the It\^{o} and
Stratonovich integrals.) The recent theories of \cite{Hai13,HMW14,H14,GIP15,K16}
have made substantial progress towards interpreting solutions of (locally
subcritical) singular stochastic PDEs in such a way that the effect
of the choice of the approximation scheme on the scaling limit can
be understood. Since our present work does not even show that the
scaling limit is unique for our \emph{single} choice of mollification,
we are definitely not yet in a position to understand the effect of
the approximation on the scaling limit in $2+1$ dimensions.

We note that our situation is similar to the work \cite{MU18} in
dimensions $d\ge3$, which obtained a scaling limit with $\lambda=\eps^{\frac{d}{2}-1}$
(so also attenuating the nonlinearity as $\eps\to0$, see \cite{GRZ18}
for further discussion) and showed that it satisfies an additive
stochastic heat equation with a \emph{modified} effective diffusivity
and noise strength, that is, a different choice of $\nu$ and $D$. However,
\cite{MU18} considers noise that is mollified in both time and space,
so the correct analogy with our white-in-time case could be that only
the noise strength should be modified --- this is the situation for
the multiplicative stochastic heat equation in $d\ge3$ \cite{MSZ16,GRZ18,M17}. 

 The possibility of a Gaussian scaling limit, as in \cite{MU18}, is \emph{not} ruled out by our \cref{thm:limitsnontrivial}.  
Indeed, it is quite possible that the scaling limits we obtain in
our setting are Gaussian, especially in view of the Gaussian limits
obtained by \cite{Fen15, CSZ17, CSZ18} for the multiplicative stochastic heat equation
in $(2+1)$-dimensions, which we discuss in more detail later.

\subsection{Proof strategy}
Our proof strategy for \cref{thm:maintheorem0} is inspired by intuition from perturbative renormalization, combined with probabilistic techniques. We use the Gaussian Poincar\'e inequality, together with the Cole--Hopf transformation, the Feynman--Kac formula, and recently derived tightness criteria for negative H\"older spaces~\cite{FM17, G18}, to conclude that $(h^\eps)_{\eps>0}$ is a tight family of random distributions if the expected value of a certain intersection local time under a randomly tilted Wiener measure remains finite as $\eps\to 0$. The tilting involves the nonlinearity parameter $\lambda$, the mollification parameter $\eps$, and the white noise $\dot{W}$. To understand this  expected value, it is natural to try to expand it as a power series in $\lambda$. This resembles the expansions commonly occurring in perturbative renormalization. In fact, if $a_k$ is the coefficient of $\lambda^k$, then $a_k$ can, in principle, be written using Feynman diagrams, since we are expanding around $\lambda=0$, which corresponds to a Gaussian measure.  

However, understanding these coefficients is likely to be a very complex task, intimately tied to the complexities of the so far unsolved task of renormalizing the $(2+1)$-dimensional KPZ equation. Instead, we adopt a different strategy, which can be roughly described as follows. If $f_\eps(\lambda)$ is the original function of $\lambda$ that we are trying to bound, then we first bound it by a simpler function $g_\eps(\lambda)$. Then we exhibit a sequence of nonnegative functions $\{g_{\eps,k}\}_{k\ge 0}$, with $g_{\eps,0}=g_\eps$, such that they satisfy a hierarchical system of differential inequalities of the form $|g_{\eps,k}'(\lambda)|\le Cg_{\eps,k+1}(\lambda)$, where $C$ is some constant that does not depend on $k$ or $\eps$. An $\eps$-free bound on $f_\eps(\lambda)$ is then obtained by manipulating this hierarchical system of inequalities.

Our proof of \cref{thm:limitsnontrivial0} relies on the observation that $\int v(t,x)\,\dif x$ depends only on the spatial averages of the white noise over the entire torus. These spatial averages, having no spatial fluctuations and thus not feeling any effect of the nonlinearity, have exactly the same effect on $\int h^{\eps}(t,x)\,\dif x$ as they do on $\int v(t,x)\,\dif x$. However, we will show in \cref{sec:nontriviality} that $\int h^{\eps}(t,x)\,\dif x$, in contrast to $\int v(t,x)\,\dif x$, also feels effects of higher Fourier modes of the white noise, to an extent that does not diminish as $\eps\to0$. Calculating and understanding the higher Fourier modes involves Malliavin calculus and hypercontractivity, along with the inequalities described in the previous paragraph.


\subsection{Organization of the paper}
The rest of the paper is organized as follows. In \cref{sec:onepara} we reduce the number of parameters from three to one by a suitable rescaling of the equation. In \cref{sec:CH} we introduce the Cole--Hopf transformation and the Feynman--Kac representation of the solutions to the mollified equation. In \cref{sec:negholder} we introduce negative H\"older spaces and criteria for tightness of probability measures on such spaces. In \cref{sec:gaussian} we  recall some basic facts about Malliavin calculus that we will use.  In \cref{sec:derivatives}
we establish key derivative formulas that we will use throughout the
paper. \cref{sec:Taylorbound},
the heart of the work, proves the convergence of the infinite series mentioned above. 
We conclude the proof of \cref{thm:maintheorem0} in \cref{sec:maintheoremproof}.
Finally, in \cref{sec:nontriviality} we prove \cref{thm:limitsnontrivial0}.
Several of our estimates involve somewhat lengthy but straightforward
calculations, which we defer to \cref{sec:techproofs} to preserve the
flow of the main arguments.

\subsection{Acknowledgments}

We thank Felipe Hernandez and Lenya Ryzhik for helpful
conversations. We are also grateful to Ivan Corwin, Martin Hairer, Jean-Christophe Mourrat, and Xianliang Zhao for insightful feedback on a draft of the paper; in particular, we thank Martin Hairer for bringing to our attention the conjectures mentioned in \cref{rem:otherscalings}. Finally, the comments of two anonymous referees helped improve the presentation in numerous places.

\section{Reduction to one parameter}\label{sec:onepara}
Let $\widetilde{h}^\eps$ be a solution to \cref{eq:htildePDE0}. Let us rescale $\widetilde{h}^\eps$ by defining a new process
\[
\widetilde{g}^\eps(t,x) := \lambda(2\nu)^{-1}\widetilde{h}^\eps((2\nu)^{-1} t, x).
\]
An easy verification shows that $\widetilde{g}^\eps$ satisfies the equation
\begin{align*}
\partial_t \widetilde{g}^\eps(t,x) &= \frac{1}{2}\Delta \widetilde{g}^\eps(t,x) + \frac{1}{2}|\log \eps|^{-\frac{1}{2}} |\nabla \widetilde{g}^\eps(t,x)|^2 \\
&\qquad + \lambda(2\nu)^{-2}\sqrt{D}\dot{W}^\eps((2\nu)^{-1}t,x),
\end{align*}
with $\widetilde{g}^\eps(0,x)=0$. 
Now let
\[
B(t,x) := (2\nu)^{1/2}W((2\nu)^{-1}t,x).
\]
Then $\dot{B}$ is again a standard space-time white noise. Let $B^\eps := \rho^\eps* B$. Since there is no scaling in space in the definition of $B$, it follows that
\[
B^\eps(t,x) := (2\nu)^{1/2}W^\eps((2\nu)^{-1}t,x).
\]
Thus, $\widetilde{g}^\eps$ satisfies the equation
\begin{align*}
\partial_t \widetilde{g}^\eps(t,x) &= \frac{1}{2}\Delta \widetilde{g}^\eps(t,x) + \frac{1}{2}|\log \eps|^{-\frac{1}{2}} |\nabla \widetilde{g}^\eps(t,x)|^2 +\sqrt{\theta}\dot{B}^\eps(t,x),
\end{align*}
where
\begin{equation}\label{eq:thetadef}
\theta := \frac{\lambda^2 D}{(2\nu)^3}.
\end{equation}
Therefore, to study \cref{eq:htildePDE0}, it suffices to study the following  stochastic PDE, which involves only one positive parameter $\theta$:
\begin{equation}
\begin{cases}
\partial_{t}\widetilde{h}_{\theta}^{\eps}(t,x)=\frac{1}{2}\Delta\widetilde{h}_{\theta}^{\eps}(t,x)+\frac{1}{2}|\log\eps|^{-\frac{1}{2}}|\nabla\widetilde{h}_{\theta}^{\eps}(t,x)|^{2}&\\
\qquad \qquad \qquad +\sqrt{\theta}\dot{W}^{\eps}(t,x),& t>0,x\in\mathbf{T}^{2},\\
\widetilde{h}_{\theta}^{\eps}(0,x)=0, & x\in\mathbf{T}^{2}.
\end{cases}\label{eq:htildePDE}
\end{equation}
Further, we define
\begin{equation}
h_{\theta}^{\eps}(t,x)=\widetilde{h}_{\theta}^{\eps}(t,x)-\kappa_{\theta}^{\eps}(t),\label{eq:renormalization}
\end{equation}
where
\begin{equation}
\kappa^{\eps}_\theta(t)=\mathbf{E}\widetilde{h}_{\theta}^{\eps}(t,x).\label{eq:renormalizationconstantexpdef}
\end{equation}
With the above definitions, the relation \cref{eq:thetadef} shows that \cref{thm:maintheorem0} is equivalent to the following theorem about $h^\eps_\theta$.
\begin{thm}
\label{thm:maintheorem}There is a $\theta_{0}>0$ so that if $\theta\le\theta_0$, then the following hold. For any sequence $\eps_{n}\downarrow0$,
there is a subsequence $\eps_{k_{n}}$ and a $\mathcal{D}'(\mathbf{R}_{>0}\times\mathbf{T}^{2})$-valued random distribution $h_\theta$ such that $h^{\eps_{k_{n}}}_\theta \to h_\theta$ in law as $n\to \infty$. Moreover, for any sequence $\eps_{n}\downarrow0$ and any $t>0$, there is a subsequence $\eps_{k_{n}}$ and a 
$\mathcal{D}'(\mathbf{T}^{2})$-valued random distribution $h_{t;\theta}$ such that $h^{\eps_{k_n}}_\theta(t,\cdot)\to h_{t;\theta}$ in law as $n\to \infty$.
\end{thm}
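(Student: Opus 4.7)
The plan is to prove tightness of the family $(h^\eps_\theta)_{\eps>0}$ in a suitable negative Hölder space on $\mathbf{T}^2$ (uniformly on compact time intervals bounded away from zero), and then invoke the tightness criteria of \cite{FM17, G18} to extract subsequential limits in $\mathcal{D}'$. The starting point is to remove the nonlinearity via the Cole--Hopf transformation: set $u^\eps = \exp\bigl(|\log\eps|^{-1/2} \widetilde{h}^\eps_\theta\bigr)$, so that (modulo an Itô correction absorbed into $\kappa^\eps_\theta$) the field $u^\eps$ solves a linear multiplicative stochastic heat equation driven by $\dot{W}^\eps$. The Feynman--Kac formula then expresses $u^\eps(t,x)$ as an expectation over Brownian paths started at $x$, tilted by a stochastic exponential in $W^\eps$. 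This gives a workable representation of $h^\eps_\theta$ that avoids ever needing to make sense of $|\nabla \widetilde h^\eps_\theta|^2$ as $\eps\to0$.

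To establish the tightness, the task reduces to controlling, uniformly in $\eps$, moments of pairings $\langle h^\eps_\theta(t,\cdot), \varphi \rangle$ for appropriate test functions (localized bumps at small scales). Because $h^\eps_\theta$ is a deterministic functional of the Gaussian noise $W$, the Gaussian Poincaré inequality lets one bound the variance of such pairings by an $L^2$ norm of the Malliavin derivative. Differentiating the Cole--Hopf/Feynman--Kac representation in the direction of $W$ converts this Malliavin derivative into a Brownian functional involving the heat kernel evaluated along two independent Brownian trajectories, and the resulting quantity to bound is essentially the expectation of an \emph{intersection local time} of two independent 2D Brownian bridges (suitably smoothed at scale $\eps$) under a Wiener measure tilted by the nonlinearity. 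In two dimensions this intersection local time diverges like $|\log\eps|$, which is precisely what the $|\log\eps|^{-1/2}$ scaling of the nonlinearity is designed to compensate for.

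The main obstacle, and the technical heart of the argument, is showing that the tilted expectation of the intersection local time stays bounded as $\eps\to 0$. Naively expanding in $\lambda$ would produce a Feynman-diagrammatic series whose term-by-term analysis looks forbidding. Instead, I would follow the hierarchical strategy sketched in the introduction: dominate the target $f_\eps(\lambda)$ by a simpler nonnegative function $g_\eps(\lambda)=g_{\eps,0}(\lambda)$ and construct auxiliary functions $g_{\eps,k}(\lambda)$ (capturing higher intersection-local-time moments) such that $|g_{\eps,k}'(\lambda)|\le C\,g_{\eps,k+1}(\lambda)$ with $C$ independent of $k$ and $\eps$. The differentiation here is made rigorous through Gaussian integration by parts / Malliavin calculus, and hypercontractivity on the Wiener chaos controls the growth of $g_{\eps,k}$ in $k$. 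Iterating the hierarchy and choosing $\theta\le\theta_0$ small enough that the resulting series converges absolutely yields the required $\eps$-independent bound on $f_\eps(\lambda)$. Once this is established, the moment bounds feed into the negative-Hölder tightness criteria, yielding a tight family in $\mathcal{C}^{-\alpha}$; by Prokhorov, a subsequence converges in law, giving both the space-time limit $h_\theta$ and, by a diagonal extraction over test functions applied at a fixed time $t$, the fixed-time limit $h_{t;\theta}$.
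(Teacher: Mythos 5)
Your proposal tracks the paper's proof essentially step for step: Cole--Hopf to linearize, Feynman--Kac to represent the solution as a tilted Wiener expectation, Gaussian Poincar\'e plus Malliavin calculus to reduce the variance of test-function pairings to the tilted expectation of a smoothed intersection local time, the hierarchical system $|g_{\eps,k}'|\le C\,g_{\eps,k+1}$ to collapse what would be a factorially-growing diagrammatic expansion to an exponentially-growing one, and the Furlan--Mourrat negative-H\"older tightness criteria plus Prokhorov to extract subsequential limits. The one substantive slip is the claim that hypercontractivity on the Wiener chaos controls the growth of $g_{\eps,k}$ in $k$. That is not what the paper does, and it is not clear it could: the quantities $g_{\eps,k}$ are (at $\theta=0$) expectations of powers of the smoothed intersection local time of independent Brownian motions on $\mathbf{T}^2$, not chaos norms of the driving noise. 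The paper instead controls these directly with the Brownian estimates of \cref{lem:logupperbound,lem:X0Yest}, which give $\mathbb{E}\,\mathscr{I}_t^\eps[X,Y]^r\le C^r r!\,|\log\eps|^r$; the $|\log\eps|^r$ is cancelled by the $|\log\eps|^{-r}$ normalization built into $G_{r,\mathscr{Q}}$, and the $r!$ is cancelled by the $1/j!$ in the Taylor-type expansion, which is precisely why smallness of $\theta$ closes the argument. Hypercontractivity does appear in the paper, but only in \cref{sec:nontriviality} to show the limit is non-Gaussian, not in the tightness proof. Modulo this misattribution and two cosmetic inaccuracies (the trajectories are Brownian motions, not bridges, and the fixed-time statement comes from a separate fixed-time tightness criterion rather than diagonal extraction from the space-time one), the proposal is the paper's argument.
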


We give a formula for $\kappa_{\theta}^{\eps}(t)$ in \cref{lem:expcomp},
and we  obtain the first-order asymptotics
\begin{equation}
\kappa_{\theta}^{\eps}(t)=\frac{1}{2}|\log\eps|^{-\frac{1}{2}}\frac{\theta t}{\eps^{2}}\|\rho\|_{L^{2}}^{2}+O(|\log\eps|^{\frac{1}{2}})\label{eq:kappaasympt}
\end{equation}
as $\eps\to0$ for fixed $\theta$ sufficiently small and fixed  $t$ in 
 \cref{lem:kappaasympt}. We note that the big-$O$ term in~\cref{eq:kappaasympt}
is still diverging as $\eps\to0$; understanding more precise asymptotics
of $\kappa_{\theta}^{\eps}(t)$ remains an open problem.
Next, let $v_\theta$ be a solution to the stochastic heat equation 
\begin{equation}
\begin{cases}
\partial_{t}v_{\theta}(t,x)=\frac{1}{2}\Delta v_{\theta}(t,x)+\sqrt{\theta}\dot{W}(t,x) & t>0,x\in\mathbf{T}^{2}\\
v_{\theta}(0,x)=0. & x\in\mathbf{T}^{2}
\end{cases}\label{eq:htildePDE-1}
\end{equation}
The following theorem is equivalent to \cref{thm:limitsnontrivial0}. 
\begin{thm}
\label{thm:limitsnontrivial}
Take any $t>0$. Suppose that $h^\eps_\theta(t,\cdot)$ converges in law to some limit $h_{t;\theta}$ through a subsequence as $\eps\to 0$. Let $v_\theta$ be a solution of $($\ref{eq:htildePDE-1}$)$. Then $\int h_{t;\theta}(x)\,\dif x$ and  $\int v_\theta(t,x)\,\dif x$ are both non-degenerate random variables with mean zero, but their laws are different.
\end{thm}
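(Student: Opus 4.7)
\emph{Decomposition and reduction to a variance lower bound.} Integrating $\cref{eq:htildePDE}$ and $\cref{eq:htildePDE-1}$ over $\mathbf{T}^2$ annihilates the Laplacian and turns $\sqrt\theta\,\dot W^\eps$ into the standard one-dimensional Brownian motion $W_0(t):=\int_0^t\!\!\int_{\mathbf{T}^2} W(\dif s,\dif y)$ (since $\int\rho^\eps=1$, the mollification has no effect after spatial integration). Thus $\int v_\theta(t,x)\,\dif x=\sqrt\theta\,W_0(t)$ is a nondegenerate mean-zero Gaussian of variance $\theta t$. For the KPZ solution, decompose $\widetilde h^\eps_\theta=\bar h^\eps(t)+u^\eps(t,x)$ with $u^\eps$ of zero spatial mean. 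Because $|\nabla\widetilde h^\eps_\theta|^2=|\nabla u^\eps|^2$ does not involve $\bar h^\eps$, the equation for $u^\eps$ closes and is driven only by $\dot W^\eps_\perp$, the projection of $\dot W^\eps$ onto nonconstant Fourier modes; as the zeroth Fourier mode of white noise is independent of the other modes, $u^\eps\perp W_0$. Subtracting $\kappa^\eps_\theta(t)=\mathbf E\bar h^\eps(t)$ gives
\[
\int h^\eps_\theta(t,x)\,\dif x=\sqrt\theta\,W_0(t)+Y^\eps_t,
\]
where $Y^\eps_t:=\tfrac12|\log\eps|^{-\frac12}\bigl(\int_0^t\!\!\int|\nabla u^\eps|^2\,\dif x\,\dif s-\mathbf E\int_0^t\!\!\int|\nabla u^\eps|^2\,\dif x\,\dif s\bigr)$ is centered and independent of $W_0(t)$. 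Pairing against the constant $1$ is continuous on $\mathcal D'(\mathbf{T}^2)$, so along the given subsequence (after passing to a further subsequence along which $Y^\eps_t$ also converges) we obtain $(W_0(t),Y^\eps_t)\to(W_0(t),Y_t)$ jointly in law, with independence preserved and $\mathbf E Y_t=0$ (using uniform integrability from the moment bounds developed for \cref{thm:maintheorem}). The mean-zero and nondegeneracy claims are now automatic, and the laws of $\sqrt\theta\,W_0(t)+Y_t$ and $\sqrt\theta\,W_0(t)$ coincide if and only if $Y_t\equiv 0$. The theorem therefore reduces to proving $Y_t\not\equiv 0$.

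\emph{Variance lower bound via Wiener chaos.} I would show $\liminf_{\eps\to 0}\Var(Y^\eps_t)>0$; combined with uniform integrability of $(Y^\eps_t)^2$ this yields $\Var(Y_t)>0$, hence $Y_t\not\equiv 0$. Since
\[
\Var(Y^\eps_t)=\tfrac14|\log\eps|^{-1}\Var\Bigl(\int_0^t\!\!\int|\nabla u^\eps|^2\,\dif x\,\dif s\Bigr),
\]
the task is to show the right-hand variance grows at least like $|\log\eps|$. Expand $u^\eps=v^\eps_\perp+|\log\eps|^{-\frac12}u^\eps_1+O(|\log\eps|^{-1})$, where $v^\eps_\perp$ solves the linearized mollified SHE driven by $\dot W^\eps_\perp$. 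The leading centered contribution $\int_0^t\!\!\int(|\nabla v^\eps_\perp|^2-\mathbf E|\nabla v^\eps_\perp|^2)\,\dif x\,\dif s$ lies in the second Wiener chaos, and its variance can be computed explicitly in Fourier via Wick's formula and Parseval: it is comparable to $\theta^2 t\sum_{k\neq 0}|\hat\rho(\eps k)|^4/|k|^2$, a logarithmically divergent two-dimensional sum (the hallmark of the critical dimension), giving $\sim c\,\theta^2 t\,|\log\eps|$ as $\eps\downarrow 0$. Combined with the prefactor $\tfrac14|\log\eps|^{-1}$ this produces a strictly positive limiting variance for the leading-order piece of $Y^\eps_t$.

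\emph{Main obstacle.} The main technical step is to justify replacing $u^\eps$ by its linearization $v^\eps_\perp$, i.e., to show that the higher corrections $u^\eps_k$ for $k\ge 1$ contribute to $\Var(Y^\eps_t)$ only at subleading order (each comes with an extra $|\log\eps|^{-\frac12}$, but the variances one must control are nonlinear functionals of the noise). By Gaussian hypercontractivity the control reduces, on each fixed Wiener chaos, to $L^2$ estimates; these in turn require differentiating the Feynman--Kac representation of the Cole--Hopf transform $Z^\eps_\theta$ in the Malliavin sense and bounding the resulting derivatives uniformly in $\eps$ via the weak-coupling hierarchical differential inequalities established earlier in the paper, where the condition $\theta\le\theta_0$ enters essentially. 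A smaller but necessary ingredient is the uniform integrability of $(Y^\eps_t)^2$ allowing the variance lower bound to pass through the weak subsequential limit; this follows from the same uniform moment bounds that drive the tightness argument for \cref{thm:maintheorem}.
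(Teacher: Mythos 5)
Your decomposition $\int h^\eps_\theta(t,x)\,\dif x = \sqrt{\theta}\,W_0(t) + Y^\eps_t$ with $Y^\eps_t\perp W_0(t)$, the reduction to showing $Y_t\not\equiv 0$ for the limit, and the identification of the logarithmically divergent second-chaos Fourier sum $\sum_{k\ne 0}|\widehat\rho(\eps k)|^4/|k|^2$ as the mechanism, all match the paper (cf.\ \cref{lem:reduction} and \cref{lem:abound}). But the step where you pass the variance lower bound through the subsequential limit is a genuine gap: you invoke uniform integrability of $(Y^\eps_t)^2$, claiming it ``follows from the same uniform moment bounds that drive the tightness argument.'' Those bounds are $L^2$ bounds only --- the paper is explicit that it controls only the $p=2$ case, which is precisely why \cref{thm:maintheorem-quantitative} lands in $\mathcal{C}^{-1-\delta}$ rather than a better space. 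Boundedness of $\mathbf{E}(Y^\eps_t)^2$ does not give uniform integrability of $(Y^\eps_t)^2$; for that one needs a uniform $p>2$ moment bound on the whole of $Y^\eps_t$, which is not available.

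The paper avoids this by not trying to pass the full variance to the limit at all. Instead it projects $\int f^\eps_\theta(1,\cdot)\,\dif x$ onto the Wick squares $\{W_1[\e_{k,\ell}]\overline{W_1[\e_{k,\ell}]}-1\}$, obtaining a piece $A^\eps_\theta$ that lies in a fixed Wiener chaos, so Gaussian hypercontractivity \emph{does} give uniform $L^p$ bounds on $A^\eps_\theta$ for $p>2$, while the orthogonal remainder $B^\eps_\theta$ is only controlled in $L^2$. The elementary \cref{lem:nontrivialitycondition} then shows directly --- from $\mathbf{E}(A^\eps_\theta)^2\ge c$, $\mathbf{E}A^\eps_\theta B^\eps_\theta=0$, $\sup_\eps\mathbf{E}|A^\eps_\theta|^p<\infty$, and $\sup_\eps\mathbf{E}|B^\eps_\theta|^2<\infty$ --- that $A^\eps_\theta+B^\eps_\theta$ cannot tend to $0$ in probability, with no uniform integrability assumption on the full second moment. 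A secondary difference: rather than a formal power series for $u^\eps$ in $|\log\eps|^{-1/2}$ (whose infinitely many terms you would still have to sum), the paper computes the chaos-2 coefficient $a^\eps_{\theta;k,\ell}$ exactly via two Gaussian integrations by parts and the Feynman--Kac formula, then bounds its $\theta$-derivative using \cref{prop:simpleUB} and the mean-value theorem; the $\theta=0$ evaluation is the ``linearized'' object you have in mind, but the error is controlled by a single $\theta$-derivative rather than an infinite $|\log\eps|^{-1/2}$-expansion. To repair your proposal you should replace the uniform-integrability argument with the projection-plus-\cref{lem:nontrivialitycondition} device, and replace the $|\log\eps|^{-1/2}$-expansion with the finite $\theta$-expansion of the coefficients.
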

Throughout the rest of the paper, we will work with the processes $\widetilde{h}^\eps_\theta$ and $h^\eps_\theta$ defined here instead of the processes $\widetilde{h}^\eps$ and $h^\eps$ defined earlier. We will prove  \cref{thm:maintheorem} and \cref{thm:limitsnontrivial} instead of \cref{thm:maintheorem0} and \cref{thm:limitsnontrivial0}.


\section{The Feynman--Kac formula\label{sec:CH}}

We construct solutions to the approximating problems \cref{eq:htildePDE}
by using the Cole--Hopf transform \cite{Hop50,Col51} to transform
the equation into a multiplicative stochastic heat equation, and then
the Feynman--Kac formula to represent the solutions to the multiplicative
stochastic heat equation in terms of the expectation of a functional
of a Brownian motion. This Feynman--Kac representation will then
form the basis for our analysis throughout the paper.

The Cole--Hopf transform of $\widetilde{h}_{\theta}^{\eps}$ is defined as
\begin{equation}
\widetilde{u}_{\theta}^{\eps}=\exp\{|\log\eps|^{-\frac{1}{2}}\widetilde{h}_{\theta}^{\eps}\}.\label{eq:uexph}
\end{equation}
Using It\^{o}'s formula, it is easy to verify that this function solves the multiplicative stochastic heat equation
\begin{equation}
\begin{cases}
\partial_{t}\widetilde{u}_{\theta}^{\eps}(t,x)=\frac{1}{2}\Delta\widetilde{u}_{\theta}^{\eps}(t,x)&\\
\qquad \qquad \qquad +\frac{1}{2}|\log\eps|^{-\frac{1}{2}}\sqrt{\theta}\widetilde{u}_{\theta}^{\eps}(t,x)\dot{W}^{\eps}(t,x)& \\
\qquad \qquad \qquad +\frac{\theta}{2}|\log\eps|^{-1}\eps^{-2}\|\rho\|_{L^{2}}^{2}\widetilde{u}_{\theta}^{\eps}(t,x), & t>0,x\in\mathbf{T}^{2},\\
\widetilde{u}_{\theta}^{\eps}(0,x)=1, & x\in\mathbf{T}^{2},
\end{cases}\label{eq:MSHE}
\end{equation}
where the last term comes from the It\^{o} correction. Here, we see that
the noise has been attenuated by the same factor $|\log\eps|^{-\frac{1}{2}}$
that multiplied the nonlinearity in the KPZ equation. 

The multiplicative stochastic heat equation in $2+1$ dimensions with this noise strength has been studied in the paper \cite{CSZ17}, which showed among many other things that, for $\theta$ below a critical value, $|\log\eps|^{\frac{1}{2}}(\widetilde{u}_{\theta}^{\eps}(t,x)-1)$, averaged over a macroscopic scale, converges to a nontrivial Gaussian random variable (see \cite[Theorem 2.17]{CSZ17}). This is reminiscent of the setting of \cref{thm:maintheorem}, except that instead of subtracting $1$, we take a logarithm before multiplying $\widetilde{u}_{\theta}^{\eps}(t,x)$ by $|\log\eps|^{\frac{1}{2}}$. Because the limiting random field is a distribution rather than a function, it is not clear how to relate these results. Also, much earlier, \cite{BC98} considered a version of
\cref{eq:MSHE} with (in our notation) a very specific tuning of $\theta$
around the critical value, and showed the existence of a limit of
the covariance structure. See also \cite[Remark 2.19]{CSZ17} for
a more detailed discussion of \cite{BC98}.

Here and throughout the rest of the paper, let $\mathbb{E}_{X^{t,x}}$ denote expectation with respect to a Brownian motion on the torus, running \emph{backwards} in time, starting at position $x\in\mathbf{T}^{2}$ at time $t\ge0$.  By the generalized Feynman--Kac formula proved in \cite{BC95}, the solution to \cref{eq:MSHE} can be written as
\begin{equation}
\widetilde{u}_{\theta}^{\eps}(t,x)=\mathbb{E}_{X^{t,x}}\exp\left\{ \theta^{\frac{1}{2}}|\log\eps|^{-\frac{1}{2}}\int_{0}^{t}\int\rho^{\eps}(X(s)-y)W(\dif y\,\dif s)\right\} .\label{eq:FK1}
\end{equation}
The proof of \cref{eq:FK1} given in \cite[(3.22)]{BC95} is for the $(1+1)$-dimensional case on the whole space; however, no part of their proof is specific to one space dimension, and replacing the white noise in \cite{BC95} with a spatially-periodic white noise (which is equivalent to working on the torus) requires no modification. The computation
of \cref{eq:FK1} previously appeared in \cite[Remark 2.16]{CSZ17};
see also the $(d+1)$-dimensional case, $d\ge3$, in \cite{MSZ16}.
Using \cref{eq:uexph}, we thus get the formula
\begin{align}
\widetilde{h}_{\theta}^{\eps}(t,x)&=|\log\eps|^{\frac{1}{2}}\log\mathbb{E}_{X^{t,x}}\exp\biggl\{ \theta^{\frac{1}{2}}|\log\eps|^{-\frac{1}{2}}\notag\\
&\qquad \qquad \qquad \qquad \cdot\int_{0}^{t}\int\rho^{\eps}(X(s)-y)W(\dif y\,\dif s)\biggr\},\label{eq:fkform}
\end{align}
and hence, by \cref{eq:renormalization},
\begin{equation}
h_{\theta}^{\eps}(t,x)=|\log\eps|^{\frac{1}{2}}\log\mathbb{E}_{X^{t,x}}\mathscr{E}^\eps_{t,\theta}[W,X],\label{eq:MainFeynmanKacFormula}
\end{equation}
where
\begin{align*}
\mathscr{E}^\eps_{t,\theta}[W,X]&=\exp\biggl\{ \theta^{\frac{1}{2}}|\log\eps|^{-\frac{1}{2}}\int_{0}^{t}\int\rho^{\eps}(X(s)-y)W(\dif y\,\dif s)\notag \\
&\qquad \qquad \qquad -|\log\eps|^{-\frac{1}{2}}\kappa_{\theta}^{\eps}(t)\biggr\},
\end{align*}
and $\kappa_\theta^\eps(t)$ is the function defined in \cref{eq:renormalizationconstantexpdef}.

The above formulas show that our model is very closely related to the directed polymer model. Indeed,  if the Brownian  motion is replaced by a random walk on a lattice and the white noise by a collection of i.i.d.~Gaussian random variables on the lattice, then the expectation in \cref{eq:MainFeynmanKacFormula} is proportional to the partition function of the directed polymer in $2+1$ dimensions. In fact, the analysis performed in this paper could equally well be done in that discrete setting, with minimal modifications to account
for the discretization. See \cite{comets17, Fen15} for other recent results
about the $(2+1)$-dimensional directed polymer model.

\section{A criterion for tightness\label{sec:negholder}}
In this section we introduce the negative H\"older spaces which we use to state \cref{thm:maintheorem-quantitative}, a stronger version of \cref{thm:maintheorem}.  
In our proof of \cref{thm:maintheorem-quantitative}, we will use the tightness criterion
for random distributions given in~\cite{FM17}. Since we will be partially
working in the parabolic setting, we will use the easy adaptation
of their results to the parabolic scaling. This adaptation was previously
stated and used in \cite{G18}. Here we only state the
results that we use in this paper.

Throughout this paper we will let $\mathfrak{s}=(2,1,1)$, corresponding
to the parabolic scaling of space-time. We first recall the definition
of negative H\"older spaces that we will use. These spaces are spaces of distributions: unlike functions, they do not take values at points, but yield values when averaged against test functions. Of course, if a test function is scaled so as to approach a delta function, the value of a distribution averaged against the test function is liable to blow up.  Unlike the spaces $\mathcal{D}'$ of distributions defined for the statement of \cref{thm:maintheorem0}, negative H\"older spaces include information about the quantitative rate of blowup as a test function is scaled as to approach a delta. See for example \cite{FM17}, \cite{H14}, or \cite{HL15} for more details on these spaces. 

Note, however, that we define the \emph{separable} versions of these spaces below, which are slightly different from the more common definitions given in \cite{H14,HL15}. The difference is that the separable versions of the spaces are the closure of $\mathcal{C}^\infty$ in the relevant norm, whereas the usual definition is simply all distributions for which the norm is finite. In our context, the distinction is not very material, because the non-separable spaces embed into the separable versions with any strictly smaller regularity exponent. However, because we are establishing a tightness result, we will want to work in separable spaces so that Prokhorov's theorem applies.

We recall the definition, for $r\in\mathbf{Z}_{\ge0}$, of $\mathcal{C}^r(U)$ to be the space of $r$-times differentiable functions on a space $U$, with the norm given by the sum of the $\mathcal{L}^\infty$ norms of the function and its derivatives up to order $r$. 
Now let $\alpha<0$, $r_{0}=-\lfloor\alpha\rfloor$. First, we will define the relevant H\"older space for functions on $\mathbf{T}^2$, which in our setting will represent the evolution at a fixed time. Let $B(0,1/2)=\{x\in\mathbf{R}^2:|x|<1/2\}$. For $\eta:\mathbf{R}^2\to\mathbf{R}$ with support contained in $B(0,1/2)$, define
\[
 \mathcal{S}^\lambda\eta(x)=\lambda^{-2}\eta(\lambda^{-1}x).
\]
Interpret $\mathcal{S}^\lambda\eta$ as a function on $\mathbf{T}^2$ by periodization. Let $\mathcal{C}^{\alpha}(\mathbf{T}^{2})$ be the completion of $\mathcal{C}^{\infty}(\mathbf{T}^{2})$ under the norm
\begin{align*}
\|f\|_{\mathcal{C}^{\alpha}(\mathbf{T}^{2})}&=\sup\biggl\{\lambda^{-\alpha}\int f(x)\mathcal{S}^\lambda\eta(x-y)\,\dif x: 0<\lambda<1,\\
&\qquad  \qquad \qquad y\in\mathbf{T}^{2}, \, \eta\in\mathcal{C}_{c}^{r_{0}}(B(0,1/2)), \, \|\eta\|_{\mathcal{C}^{r_{0}}}\le1\biggr\}.
\end{align*}
Next, let us define the relevant H\"older space with parabolic scaling. Here, let $B(0,1/2)=\{(t,x)\in\mathbf{R}\times\mathbf{R}^{2}:|t|+|x|<1/2\}$. For $\eta:\mathbf{R}\times \mathbf{R}^2\to\mathbf{R}$ with support contained in $B(0,1/2)$, define
\[
 \mathcal{S}^\lambda_{\mathfrak{s}}\eta(x)=\lambda^{-4}\eta(\lambda^{-2}t,\lambda^{-1}x).
\]
As before, $\mathcal{S}^\lambda_{\mathfrak{s}}\eta$ can be interpreted as a function on $\mathbf{R} \times \mathbf{T}^2$ by spatial periodization. 
Then define $\mathcal{C}_{\mathfrak{s}}^{\alpha}(\mathbf{R}\times\mathbf{T}^{2})$
to be the completion of $\mathcal{C}_{\mathrm{c}}^{\infty}(\mathbf{R}\times\mathbf{T}^{2})$
under the norm
\begin{align*}
\|f\|_{\mathcal{C}_{\mathfrak{s}}^{\alpha}(\mathbf{R}\times\mathbf{T}^{2})}&=\sup\biggl\{\lambda^{-\alpha}\int_{-\infty}^{\infty}\int f(t,x)\mathcal{S}^\lambda_\mathfrak{s}\eta(t-s,x-y)\,\dif x\,\dif t: \\
&\qquad \qquad \qquad  0<\lambda<1,\, (s,y)\in\mathbf{R}\times\mathbf{T}^{2}, \\
&\qquad \qquad \qquad   \eta\in\mathcal{C}_{c}^{r_{0}}(B(0,1/2)), \, \|\eta\|_{\mathcal{C}^{r_{0}}}\le1\biggr\}.
\end{align*}
Furthermore, define $\mathcal{C}_{\mathfrak{s};\mathrm{loc}}^{\alpha}(\mathbf{R}_{> 0}\times\mathbf{T}^{2})$
to be the completion of $\mathcal{C}_{\mathrm{c}}^{\infty}(\mathbf{R}\times\mathbf{T}^{2})$
under the family of seminorms indexed by $\chi\in\mathcal{C}_{c}^{\infty}(\mathbf{R}_{> 0}\times\mathbf{T}^{2})$
(in particular, supported on a compact set that does not intersect
$\{t=0\}$) given by 
\[
f\mapsto\|\chi f\|_{\mathcal{C}_{\mathfrak{s}}^{\alpha}(\mathbf{R}\times\mathbf{T}^{2})}. 
\]
Now we quote the key result of \cite{FM17}, specialized in two different ways to our setting. In the following, a random distribution $f$ is called `translation-invariant' if $f(\cdot +x_0)$ has the  same law as $f(\cdot)$ for any fixed $x_0\in \mathbf{T}^2$. 
\begin{thm}[Fixed-time version; \cite{FM17}]
\label{thm:tightness-fixedtime} Suppose that $p\in[1,\infty)$, $r\in\mathbb{N}$, and
\[
-r<\alpha<\beta-\frac{2}{p}<\beta<0.
\]
Then there exists a function $\phi\in\mathcal{C}_c^r((-1/2,1/2)^2)$ and a finite set
$\Psi\subset\mathcal{C}^r_{c}((-1/2,1/2)^{2})$  so that the
following holds. Let $\{f_{m}\}_{m\ge 1}$ be a 
family of translation-invariant random elements of $\mathcal{C}^r_{c}(\mathbf{T}^{2})^{*}$ such that for some constant $C<\infty$, we have 
\[
\sup_{m\ge 1}\mathbf{E}\left|\int f_{m}(y)\phi(y)\,\dif y\right|^{p}\le C,
\]
and, for all $n\ge1$,
\[
\sup_{m\ge 1}\sup_{\psi\in\Psi}\mathbf{E}\left|\int f_{m}(y)\mathcal{S}^{2^{-n}}\psi(y)\,\dif y\right|^{p}\le C\cdot2^{-np\beta}.
\]
Then $\{f_{m}\}_{m\ge 1}$ is tight in $\mathcal{C}^{\alpha}(\mathbf{T}^{2})$.
\end{thm}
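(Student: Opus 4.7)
The plan is to prove the tightness via a wavelet decomposition, which is the standard route to Besov-H\"older tightness of random distributions and is the approach taken in \cite{FM17}. First I would fix a Daubechies wavelet basis of $L^{2}(\mathbf{T}^{2})$ of regularity at least $r$: a father wavelet $\phi$ and a finite collection $\Psi=\{\psi^{(1)},\psi^{(2)},\psi^{(3)}\}$ of mother wavelets, all compactly supported in $(-1/2,1/2)^{2}$ and with sufficiently many vanishing moments. After periodization, $\{\phi(\cdot-j)\}_{j}\cup\{\mathcal{S}^{2^{-n}}\psi^{(k)}(\cdot-2^{-n}j)\}_{n\ge 0,k,j}$ is an orthonormal basis of $L^{2}(\mathbf{T}^{2})$, and these are exactly the test functions that appear in the hypotheses.

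Next I would invoke the wavelet characterization of $\mathcal{C}^{\alpha}(\mathbf{T}^{2})$: for $\alpha\in(-r,0)$, the norm in the paper is equivalent to
\begin{equation*}
\|f\|_{\mathrm{wav};\alpha}:=\sup_{j}|\langle f,\phi(\cdot-j)\rangle|+\sup_{n\ge 0,\,\psi\in\Psi,\,j}2^{n\alpha}\,|\langle f,\mathcal{S}^{2^{-n}}\psi(\cdot-2^{-n}j)\rangle|.
\end{equation*}
One direction is immediate because $\phi$ and each $\mathcal{S}^{2^{-n}}\psi$ belong (up to a fixed normalizing constant) to the admissible test-function class. The reverse direction uses that an arbitrary test function $\mathcal{S}^{\lambda}\eta$ with $\|\eta\|_{\mathcal{C}^{r_{0}}}\le 1$ can be expanded in the wavelet basis, and the expansion coefficients decay at the right rate uniformly in $\lambda$: splitting the sum into the regimes $2^{-n}\le\lambda$ (use vanishing moments) and $2^{-n}>\lambda$ (use $\mathcal{C}^{r_{0}}$ smoothness of $\eta$) gives the required bound.

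Then I would combine the moment hypotheses with translation invariance of $f_{m}$. Translation invariance means the hypothesis bound controls $\mathbf{E}|\langle f_{m},\mathcal{S}^{2^{-n}}\psi(\cdot-y)\rangle|^{p}$ uniformly in $y\in\mathbf{T}^{2}$. At scale $n$ there are $\sim 2^{2n}$ dyadic translates $j$, so a union bound gives
\begin{equation*}
\mathbf{E}\Bigl(\sup_{\psi\in\Psi,\,j}|\langle f_{m},\mathcal{S}^{2^{-n}}\psi(\cdot-2^{-n}j)\rangle|\Bigr)^{p}\le C\cdot 2^{2n}\cdot 2^{-np\beta}.
\end{equation*}
Picking any $\alpha'$ with $\alpha<\alpha'<\beta-2/p$ and summing,
\begin{equation*}
\sup_{m}\mathbf{E}\|f_{m}\|_{\mathrm{wav};\alpha'}^{p}\lesssim C+\sum_{n\ge 0}2^{n(p\alpha'+2-p\beta)}<\infty,
\end{equation*}
because the exponent is negative. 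By the norm equivalence above, this gives $\sup_{m}\mathbf{E}\|f_{m}\|_{\mathcal{C}^{\alpha'}(\mathbf{T}^{2})}^{p}<\infty$. Tightness in $\mathcal{C}^{\alpha}(\mathbf{T}^{2})$ then follows from the compact embedding $\mathcal{C}^{\alpha'}(\mathbf{T}^{2})\hookrightarrow\mathcal{C}^{\alpha}(\mathbf{T}^{2})$ on the separable spaces (for $\alpha<\alpha'$), together with Markov's inequality applied to the uniform $\mathcal{C}^{\alpha'}$-moment bound.

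The main obstacle is the two-sided wavelet equivalence of $\|\cdot\|_{\mathcal{C}^{\alpha}(\mathbf{T}^{2})}$ and $\|\cdot\|_{\mathrm{wav};\alpha}$: showing it with constants independent of $\lambda\in(0,1)$ requires the careful bookkeeping described above, matching the test-function scale $\lambda$ to the wavelet scale $2^{-n}$ and exploiting both the vanishing moments of $\Psi$ and the $\mathcal{C}^{r_{0}}$-regularity constraint on $\eta$ in the definition of the norm. This is the technical core of the argument, and is handled in \cite{FM17}; once it is in hand, the union bound and Markov-plus-compact-embedding steps are routine.
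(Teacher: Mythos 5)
The paper does not prove \cref{thm:tightness-fixedtime} itself: it quotes it directly from \cite{FM17} (Theorem~2.30 there), noting only that the passage from $\mathbf{R}^d$ to the torus (and, for the companion statement, to the parabolic scaling $\mathfrak{s}$) requires no real change. Your reconstruction --- Daubechies wavelet decomposition, the wavelet characterization of the $\mathcal{C}^{\alpha}$ norm, a union bound over the $\sim 2^{2n}$ translates at each dyadic scale, summing the resulting geometric series to bound $\mathbf{E}\|f_m\|_{\mathcal{C}^{\alpha'}}^p$ for $\alpha<\alpha'<\beta-2/p$, and then Markov plus the compact embedding $\mathcal{C}^{\alpha'}(\mathbf{T}^2)\hookrightarrow\mathcal{C}^{\alpha}(\mathbf{T}^2)$ --- is precisely the argument of \cite{FM17}, so it matches the route the paper is implicitly relying on.
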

\begin{thm}[In the parabolic scaling; \cite{FM17,G18}]\label{thm:tightness-parabolic}
Suppose that $p\in[1,\infty)$, $r\in\mathbb{N}$, and 
\[
-r<\alpha<\beta-\frac{4}{p}<\beta<0.
\]
Then there is a function $\phi\in\mathcal{C}^r_{c}((0,1)\times(-1/2,1/2)^{2})$
and a finite set $\Psi\subset\mathcal{C}^r_{c}((0,1)\times(-1/2,1/2)^{2})$
so that the following holds. 
Let $\{f_{m}\}_{m\ge 1}$ be a 
family of space-translation-invariant random elements of $\mathcal{C}^r_{c}(\mathbf{R}_{>0}\times\mathbf{T}^{2})^{*}$  so that,
for each $k\ge 1$, there is a constant $C(k)<\infty$ such
that
\[
\sup_{m\ge 1}\sup_{t\in [2^{-2k+1},k]}\mathbf{E}\left|\int_{0}^{\infty}\int f_{m}(s,y)\mathcal{S}^{2^{-k}}_{\mathfrak{s}}\phi(t-s,y)\,\dif y\,\dif s\right|^{p}\le C(k),
\]
and, for all $n\ge k$,
\begin{align*}
&\sup_{m\ge 1}\sup_{\psi\in\Psi}\sup_{{t\in [2^{-2k+1},k]}}
\mathbf{E}\left|\int_{0}^{\infty}\int f_{m}(s,y)\mathcal{S}^{2^{-n}}_{\mathfrak{s}}\psi(t-s,y)\,\dif y\,\dif s\right|^{p} \\
&\quad \qquad\le C(k)\cdot2^{-np\beta}.
\end{align*}
Then $\{f_{m}\}_{m\ge 1}$ is tight in $\mathcal{C}_{\mathfrak{s};\mathrm{loc}}^{\alpha}(\mathbf{R}_{>0}\times\mathbf{T}^{2})$.
\end{thm}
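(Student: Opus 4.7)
The plan is to adapt the wavelet-based Kolmogorov criterion of Furlan--Mourrat \cite{FM17} to the parabolic setting, as previously carried out in \cite{G18}. The first step is to construct an anisotropic wavelet basis of $L^2(\mathbf{R} \times \mathbf{R}^2)$ adapted to the scaling $\mathfrak{s}=(2,1,1)$: choose $\phi$ as a father wavelet and $\Psi$ as a finite collection of mother wavelets, all $\mathcal{C}^r$, compactly supported in $(0,1) \times (-1/2, 1/2)^2$, with enough vanishing moments that translates $\phi(\cdot - z)$ and parabolically rescaled translates $\mathcal{S}^{2^{-n}}_\mathfrak{s} \psi(\cdot - z)$ over an appropriate dyadic lattice form a Riesz basis. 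In this basis, the parabolic Besov norm of regularity $\alpha'$ is equivalent to a weighted sum of $p$-th powers of wavelet coefficients, and the parabolic Besov space embeds compactly (locally, after applying a cutoff) into $\mathcal{C}^{\alpha}_{\mathfrak{s};\mathrm{loc}}$ whenever $\alpha' - 4/p > \alpha$, where $4 = 2+1+1$ is the effective dimension of $(2+1)$-spacetime under parabolic scaling.

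The second step is to estimate the $p$-th moment of the Besov norm of $\chi f_m$ for a localizing cutoff $\chi \in \mathcal{C}^\infty_c(\mathbf{R}_{>0} \times \mathbf{T}^2)$. The support of $\chi$ restricts the relevant lattice translates to a bounded region with time coordinate bounded away from zero, so only finitely many coarse-scale ($\phi$-type) terms contribute, and the fine-scale ($\psi$-type) sum runs over $n \ge k$ for some $k$ determined by the support of $\chi$. Space-translation invariance of $f_m$ implies that the $p$-th moment of $\int f_m(s,y) \mathcal{S}^{2^{-n}}_\mathfrak{s} \psi(t-s, y-z) \, \dif y \, \dif s$ depends on $z$ only through its time coordinate, so the single-spatial-point hypothesis controls all spatial translates. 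Substituting the bound $C(k) \cdot 2^{-np\beta}$, multiplying by the $\sim 2^{4n}$ lattice translates per unit parabolic volume, and weighting by $2^{np\alpha'}$ from the Besov norm gives a geometric series with ratio $2^{p(\alpha' + 4/p - \beta)}$. Choosing $\alpha < \alpha' < \beta - 4/p$, which is possible by hypothesis, makes this ratio less than one and yields a uniform-in-$m$ bound on $\mathbf{E}\|\chi f_m\|^p$.

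Markov's inequality combined with the compact Besov embedding then gives tightness in $\mathcal{C}^{\alpha}_{\mathfrak{s};\mathrm{loc}}$ via Prokhorov's theorem. The main obstacle is the careful construction of an anisotropic wavelet basis with the stated support and regularity properties and the identification of the specific functions $\phi$ and $\Psi$ appearing in the statement, together with the bookkeeping to align normalizations and dyadic scales between wavelet theory and the test-function conditions in the hypothesis; once this scaffolding is in place, the moment summation and the tightness argument proceed by standard estimates.
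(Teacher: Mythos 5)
Your proposal follows essentially the same route the paper relies on: the paper does not reprove this result but simply cites Theorem~2.30 of Furlan--Mourrat together with Garban's parabolic adaptation, noting that one must (i) replace $\mathbf{R}^2$ by the torus via periodization and (ii) rerun the wavelet argument with the parabolic scaling $\mathfrak{s}$, which is exactly the structure you outline. The only detail you elide is the passage from $\mathbf{R}^2$ to $\mathbf{T}^2$ (identifying torus distributions with $\mathbf{Z}^2$-periodic ones and checking that local $\mathcal{C}^\alpha$ convergence transfers), which the paper flags explicitly but which is routine.
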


There are two differences between Theorem 2.30 of \cite{FM17} and \cref{thm:tightness-fixedtime,thm:tightness-parabolic} 
as we have stated them. The first is that \cite[Theorem~2.30]{FM17} is stated for the case of subsets of $\mathbf{R}^d$ rather than for the torus. This is no obstacle at all, because we can identify functions on $\mathbf{T}^2$ with $\mathbf{Z}^2$-periodic functions on $\mathbf{R}^2$, and it is easy to check that convergence in $\mathcal{C}^\alpha_\mathrm{loc}$ of a periodic sequence of distributions on $\mathbf{R}^2$ is the same as convergence in $\mathcal{C}^\alpha$ of the corresponding sequence of distributions on $\mathbf{T}^2$.

The second difference is that \cite[Theorem~2.30]{FM17} is stated for the case where all of the coordinates of $\mathbf{R}^d$ are scaled uniformly. It is more natural in our space-time setting to use the parabolic scaling $\mathfrak{s}$, since this scaling leaves the Laplacian invariant. As previously observed in \cite[proof of Theorem~3.10 on p.~26]{G18}, going through the proof of \cite[Theorem~2.30]{FM17}, but using the scaling framework described in \cite[Sections~3 and~10]{H14}, yields \cref{thm:tightness-parabolic}. (In our case, this means scaling time by twice the scaling of space in all places in the argument.)

We note that \cref{thm:tightness-parabolic}, in the language of \cite{FM17}, corresponds to choosing the ``spanning set''  $\{(K_k,k)\}$ with $K_k = [2^{-2k+1},k]\times\mathbf{T}^2$. Of course, the upper bound $k$  is quite arbitary: any function $f(k)$ satisfying $\lim_{k\to\infty}f(k)=\infty$ would do. The lower bound, of course, is required to be greater than $2^{-2k}$ so that the functions $f_m$ are only integrated over positive values. 

\begin{rem}
\label{rem:prodstruct}As pointed out in the discussion following
\cite[Theorem 2.7]{FM17}, the functions $\phi$ and $\psi\in\Psi$
in \cref{thm:tightness-fixedtime,thm:tightness-parabolic} 
  can be taken to be products of univariate functions of
each coordinate. (They are the wavelets of \cite{Dau88}.) We will use 
the product structure of the wavelets to simplify the proof of \cref{thm:maintheorem-quantitative}, stated below.
\end{rem}

Convergence in local negative H\"older spaces means convergence when integrated against a test function, \emph{locally uniformly} in the choice of sufficiently smooth test function up to the rate of blowup as the test functions are scaled. In particular, the topology of a negative H\"older space is stronger than the topology of $\mathcal{D}'$, so convergence in a negative H\"older space implies convergence in $\mathcal{D}'$. Moreover, the spaces $\mathcal{C}^\alpha(\mathbf{T}^2)$ and $\mathcal{C}^\alpha_{\mathfrak{s};\mathrm{loc}}(\mathbf{R}_{>0}\times\mathbf{T}^2)$ are both Polish spaces \cite[Remarks~2.4 and~2.20]{FM17}. Thus, in light of Prokhorov's theorem, the following theorem is a more quantitative version of  \cref{thm:maintheorem}. 

\begin{thm}
\label{thm:maintheorem-quantitative}There is a $\theta_{0}>0$ such 
that if $\theta\in[0,\theta_{0}]$, then for any $\delta > 0$, the family $(h^\eps_\theta)_{\eps>0}$ is a tight family of random distributions in $\mathcal{C}_{\mathfrak{s};\mathrm{loc}}^{-2-\delta}(\mathbf{R}_{>0}\times\mathbf{T}^{2})$, and for any $t>0$, the family $(h^\eps_\theta(t,\cdot))_{\eps>0}$ is  a tight family of random distributions in $\mathcal{C}^{-1-\delta}(\mathbf{T}^{2})$.
\end{thm}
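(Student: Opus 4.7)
The plan is to verify the hypotheses of the tightness criteria \cref{thm:tightness-fixedtime,thm:tightness-parabolic} for the families $(h^\eps_\theta(t,\cdot))_{\eps>0}$ and $(h^\eps_\theta)_{\eps>0}$ respectively. Both families are space-translation invariant because $\dot W^\eps$ is, and by \cref{rem:prodstruct} the test functions may be taken to be tensor products. Taking $p$ large enough that $2/p<\delta$ (resp.\ $4/p<\delta$), the remaining task is to show, for some $\beta\in(-1-\delta+2/p,0)$ (resp.\ $\beta\in(-2-\delta+4/p,0)$), an $\eps$-uniform moment bound
\[
\mathbf{E}\biggl|\int h^\eps_\theta(t,y)\,\mathcal{S}^{2^{-n}}\psi(y)\,\dif y\biggr|^p \le C\cdot 2^{-np\beta}
\]
together with its space-time analogue, for all $n\ge 1$ (resp.\ all $n\ge k$).

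To prove these moment bounds I would combine the Feynman--Kac representation \cref{eq:MainFeynmanKacFormula} with Gaussian concentration on Wiener space. Since $\mathbf{E}h^\eps_\theta(t,y)=0$, Meyer-type inequalities on the Wiener chaos (reviewed in \cref{sec:gaussian}) reduce centered $L^p$ moments to $L^p$ norms of iterated Malliavin derivatives of $h^\eps_\theta$. Using the derivative formulas of \cref{sec:derivatives} to differentiate \cref{eq:MainFeynmanKacFormula} in $W$, the $k$-th Malliavin derivative of $h^\eps_\theta(t,x)$ is, up to a single $|\log\eps|^{\frac12}$ prefactor, a combination of expectations under the Wiener measure tilted by $\mathscr E^\eps_{t,\theta}[W,X]$ of products of $k$ factors of the form $\theta^{\frac12}|\log\eps|^{-\frac12}\rho^\eps(X(s_i)-z_i)$. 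Squaring a first Malliavin derivative and integrating in $(s,z)$ yields the smeared intersection local time $\theta\int_0^t(\rho^\eps*\rho^\eps)(X(s)-Y(s))\,\dif s$ of two independent tilted Brownian paths; higher Malliavin derivatives produce further tilted paths and further such factors.

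Two dimensions is critical here: the smeared intersection local time of two 2D Brownians is of order $|\log\eps|$, exactly balancing the two $|\log\eps|^{-\frac12}$ prefactors but leaving no safety margin. Worse, the expectation is taken under a tilt by $\mathscr E^\eps_{t,\theta}$ whose exponent is itself on the edge of integrability, so naive bounds fail. The technical heart of the argument, and the content of \cref{sec:Taylorbound}, is then to carry out the hierarchical strategy sketched after \cref{thm:limitsnontrivial}: dominate the tilted expectation $f_\eps(\theta)$ by a simpler nonnegative $g_{\eps,0}(\theta)$, then inductively construct nonnegative $g_{\eps,k}(\theta)$ satisfying $|g'_{\eps,k}(\theta)|\le C\,g_{\eps,k+1}(\theta)$ with $C$ independent of both $k$ and $\eps$. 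Each $\theta$-derivative pulls down one factor of $\int_0^t\int\rho^\eps(X(s)-y)\,W(\dif y\,\dif s)$; Gaussian integration by parts converts this into a fresh intersection-local-time contribution whose $|\log\eps|$ divergence is absorbed by the corresponding $|\log\eps|^{-1}$ prefactor. The condition $\theta\le\theta_0$ ensures that iterating the hierarchy yields a convergent Taylor-type expansion and hence an $\eps$-uniform bound on $f_\eps(\theta)$.

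Finally, the $\lambda=2^{-n}$ decay in the moment bound comes from the smoothing effect of the backward Brownian motion: swapping the integration against $\mathcal{S}^\lambda\psi$ with the Brownian expectation $\mathbb{E}_{X^{t,y}}$ translates the scale $\lambda$ of the test function into a corresponding localization of the starting point of $X$, and standard heat-kernel estimates then produce the required $\lambda^{-p\beta}$ decay (with an additional scaling in the time variable contributing the extra factor needed in the parabolic case). Combining with the $\eps$-uniform bound from the hierarchy verifies \cref{thm:tightness-fixedtime,thm:tightness-parabolic}. The principal obstacle is precisely the hierarchical estimate: the constant $C$ must genuinely be $k$- and $\eps$-independent, a property resting on the exact borderline cancellation between the $|\log\eps|^{-\frac12}$ nonlinearity scaling and the 2D logarithmic intersection local time, and which is also what forces the smallness condition \cref{eq:condition}.
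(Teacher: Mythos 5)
You have the correct skeleton (Feynman--Kac, Malliavin differentiation producing tilted intersection local times, the criticality of the 2D $|\log\eps|$ balance, and the hierarchical Taylor expansion of \cref{sec:Taylorbound}), but the plan has a genuine gap at the very first decision point. You propose to take $p$ large enough that $2/p<\delta$ and then control $p$-th centered moments via ``Meyer-type inequalities on the Wiener chaos (reviewed in \cref{sec:gaussian}).'' This is not what \cref{sec:gaussian} contains: it records only the Gaussian integration-by-parts formula \cref{eq:GIP} and the Gaussian Poincar\'e inequality \cref{eq:GPI}, the latter being a strictly $L^2$ statement bounding the \emph{variance} by the squared \emph{first} Malliavin derivative. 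There is no Meyer/hypercontractivity machinery available here, and $h^\eps_\theta(t,x)$ is not confined to a fixed Wiener chaos, so hypercontractivity does not apply directly either. Controlling $p$-th moments for $p>2$ is in fact exactly what the paper cannot do: the remark immediately after the theorem says the limited regularities arise ``because we are only able to control the $p=2$ case,'' and a successful $p$-th moment argument would prove \cref{conj:betterregularity}, which is left open.

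The good news is that your ambition is unnecessary for the stated theorem. With $p=2$ you only need $-r<\alpha<\beta-1<\beta<0$ (fixed-time) and $-r<\alpha<\beta-2<\beta<0$ (parabolic); choosing $\beta$ close to $0$ makes $\alpha=-1-\delta$ and $\alpha=-2-\delta$ admissible. The actual proof therefore only needs \cref{lem:varcomp}: the Gaussian Poincar\'e inequality turns the variance of $\int h^\eps_\theta(t,x)\psi(x)\,\dif x$ into $\theta\iint\psi(x_1)\psi(x_2)\,\mathbf{E}\widehat{\mathbb{E}}^{\theta,W,\eps}_{X_1^{t,x_1},X_2^{t,x_2}}\mathscr{I}^\eps_t[X_1,X_2]\,\dif x_1\,\dif x_2$, a single tilted intersection local time. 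That tilted expectation is then bounded uniformly in $\eps$ by $C(t+1+\log|x_1-x_2|_{\mathbf{T}^2}^{-2})$ via the Taylor hierarchy together with \cref{lem:logupperbound,lem:X0Yest}. The $2^{-2n\beta}$ decay in the tightness criterion does not come from ``localization of the starting point and heat-kernel estimates'' as you sketch; it comes from the fact that pairing the logarithmic singularity $\log|x_1-x_2|^{-2}$ against the rescaled test functions produces growth of order $n$, which is $o(2^{-2n\beta})$ for \emph{every} $\beta<0$. Heat-kernel estimates do enter, but only inside \cref{lem:X0Yest} to produce the logarithm in the first place. So: replace the $p$-large/Meyer step with $p=2$ and the Gaussian Poincar\'e inequality, and correct the scaling mechanism, and your outline becomes the paper's proof.
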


The limited regularities $-2-\delta$ and $-1-\delta$ in the statement
of \cref{thm:maintheorem-quantitative} arise because we are only able
to control the $p=2$ case of the bounds required by \cref{thm:tightness-fixedtime,thm:tightness-parabolic}. 
We expect that higher moments should be bounded similarly, and thus
we make the following conjecture.
\begin{conjecture}
\label{conj:betterregularity}
For any $\delta>0$, the tightness statements in Theorem~\ref{thm:maintheorem-quantitative} hold in the spaces  $\mathcal{C}_{\mathfrak{s};\mathrm{loc}}^{-\delta}(\mathbf{R}_{>0}\times\mathbf{T}^{2})$
and $\mathcal{C}^{-\delta}(\mathbf{T}^{2})$, respectively.
\end{conjecture}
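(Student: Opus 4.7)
The plan is to upgrade the $L^2$ moment bounds underlying \cref{thm:maintheorem-quantitative} to $L^p$ bounds for every $p\ge 2$, and then apply \cref{thm:tightness-fixedtime,thm:tightness-parabolic} with $p$ taken arbitrarily large and $\beta$ arbitrarily close to $0^-$. The constraints $-r<\alpha<\beta-2/p<\beta<0$ and $-r<\alpha<\beta-4/p<\beta<0$ both allow the regularity exponent $\alpha$ to approach $0^-$ in that limit, so once we have uniform-in-$\eps$ control of $\mathbf{E}\bigl|\int h^{\eps}_\theta(t,x)\phi(x)\,\dif x\bigr|^p$ and of its space-time analogue for every $p\in[2,\infty)$, \cref{conj:betterregularity} follows immediately via Prokhorov's theorem in the same way as \cref{thm:maintheorem-quantitative} does.

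To obtain such higher-moment bounds, I would replace the Gaussian Poincar\'e inequality --- the starting point of the $p=2$ analysis advertised in the proof strategy --- by its $L^p$ analogue
\[
\|F-\mathbf{E}F\|_{L^p}\le c_p\,\bigl\||DF|_H\bigr\|_{L^p},
\]
which follows from Meyer's inequalities, or equivalently from Nelson hypercontractivity combined with the Wiener chaos decomposition. Applied to $F=\int h^{\eps}_\theta(t,x)\phi(x)\,\dif x$, this reduces the task to $L^p$ control of $|DF|_H$. The Feynman--Kac representation \cref{eq:MainFeynmanKacFormula}, together with the derivative formulas to be established in \cref{sec:derivatives}, expresses $DF$ explicitly in terms of the tilted backwards Brownian motion of \cref{sec:CH}, so that $\mathbf{E}\|DF\|_H^p$ can be recast as an expectation involving $p$ independent backwards Brownian motions simultaneously tilted by the same noise $W$, with their pairwise interactions governed by intersection local-time functionals that generalize the single two-path local time already appearing at $p=2$.

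The main obstacle is to push the hierarchical differential-inequality machinery of \cref{sec:Taylorbound} through to this multi-path setting. For $p=2$ the authors produce functions $g_{\eps,k}(\theta)$ satisfying $|g_{\eps,k}'|\le C\,g_{\eps,k+1}$ with $C$ independent of $k$ and $\eps$; the natural analogue for $p>2$ would be a family $g_{\eps,k,p}$ of tilted expectations of $p$-fold intersection local-time functionals obeying $|g_{\eps,k,p}'|\le C_p\,g_{\eps,k+1,p}$, with $C_p$ permitted to grow with $p$ (harmless, as $p$ is fixed before $\eps\to 0$). I would construct these by iterating the Gaussian integration by parts used in \cref{sec:Taylorbound} and organizing the resulting sums over pairings of the $p$ Brownian paths. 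The genuine difficulty is the combinatorial proliferation and borderline integrability of intersection local times for $p>2$ planar Brownian paths: each successive Malliavin derivative introduces new pairings, and closing the hierarchy uniformly in $\eps$ appears to demand a careful accounting of cancellations that goes well beyond the weak-coupling input $\theta\le\theta_0$ which suffices at $p=2$.
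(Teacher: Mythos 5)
The statement you have attempted to prove is stated in the paper as a \emph{conjecture} (\cref{conj:betterregularity}), and the paper gives no proof of it; the authors say immediately beforehand that the regularities $-2-\delta$ and $-1-\delta$ in \cref{thm:maintheorem-quantitative} arise because they can only control the $p=2$ case of the moment bounds required by \cref{thm:tightness-fixedtime,thm:tightness-parabolic}, and that they expect higher moments to be controllable similarly. So there is no proof to compare yours against; what you have written is a sketch of the strategy the authors themselves anticipate, not an execution of it.

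Your outline is sensible at a high level --- replace the Gaussian Poincar\'e inequality with an $L^p$ Poincar\'e/Meyer-type inequality, express $\mathbf{E}\|DF\|_H^p$ as a tilted expectation over $p$ backward Brownian paths sharing the common noise $W$, and close a hierarchical system of differential inequalities as in \cref{sec:Taylorbound}. But it stops exactly at the hard step, which you yourself flag as the genuine difficulty: closing the hierarchy uniformly in $\eps$ in the multi-path setting. At $p=2$ the crucial device is \cref{prop:frderivbound-1} together with \cref{cor:ginfseriesbound}: the $\theta$-derivative of a tilted expectation over $J$ paths is collapsed, via Young's inequality, into a tilted expectation of the same form over a \emph{constant} number ($3J$) of paths, which keeps the number of terms growing only geometrically and lets the weak-coupling condition $\theta\le\theta_0$ close the Taylor series. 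For $p>2$ one would need an analogous collapse together with multi-path analogues of the intersection-local-time estimates \cref{lem:logupperbound,lem:X0Yest}, and one would have to verify that the resulting Taylor-like series is summable uniformly in $\eps$. You have neither written these down nor checked that they close. Identifying the obstruction is honest, but it is not a proof: the proposal is a plan for attacking an open problem, not a solution to it.
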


\section{Malliavin calculus}\label{sec:gaussian}
We will use several elementary aspects of the Malliavin calculus in the proofs of our theorems. In this section we recall only the facts that we will use. We refer the reader to Chapter~1 of \cite{Nua06} for an introduction to the Malliavin calculus.

For a random variable $Y$ of the form
\[
 Y = f\left(\int_0^t \int \mathbf{g}(u,x)W(\dif x\,\dif u)\right)
\]
with $f:\mathbf{R}^J\to\mathbf{R}$ smooth and $\mathbf{g}:[0,t]\times\mathbf{T}^2\to\mathbf{R}^J$, we recall that the Malliavin derivative of $Y$ is given by, for $s\in[0,t]$ and $y\in\mathbf{T}^2$,
\begin{equation}
 \Dif_{s,y}Y =\mathbf{g}(s,y)\cdot \nabla f\left(\int_0^t \int \mathbf{g}(u,x)W(\dif x\,\dif u)\right).\label{eq:malldef}
\end{equation}
(Of course, the Malliavin derivative can be defined for more general random variables, but for simplicity we specialize to the case we will use.) The Malliavin derivative satisfies the chain rule
\[
 \Dif_{s,y} h(Y) = h'(Y)\Dif_{s,y}Y,
\]
and the product rule
\[
 \Dif_{s,y} (YZ) = Y\Dif_{s,y}Z +  Z\Dif_{s,y}Y.
\]
We will use two key facts about Malliavin derivatives in our computations, which we state in the following two propositions. In each statement $Y$ is as above.

\begin{prop}[Gaussian integration by parts]
 We have
\begin{equation}
\mathbf{E} \left(Y \int_0^t \int \xi(s,y)W(\dif y\,\dif s)\right) = \mathbf{E}\left(\int_0^t \int \xi(s,y)\Dif_{s,y}Y\,\dif y\,\dif s\right).\label{eq:GIP}
\end{equation}
\end{prop}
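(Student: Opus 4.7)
The plan is to reduce the stated identity to the classical Stein-type Gaussian integration by parts formula for a finite-dimensional Gaussian vector, after which everything is essentially a matter of matching indices via the explicit formula \cref{eq:malldef} for $\Dif_{s,y}Y$.

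First I would write the inputs on which $Y$ depends as a finite-dimensional Gaussian vector. By assumption, $Y=f(Z_1,\ldots,Z_J)$ where $Z_j:=\int_0^t\int \mathbf{g}_j(u,x)W(\dif x\,\dif u)$ and $\mathbf{g}=(\mathbf{g}_1,\ldots,\mathbf{g}_J)$. Set $W_\xi:=\int_0^t\int\xi(s,y)W(\dif y\,\dif s)$. Then $(W_\xi,Z_1,\ldots,Z_J)$ is a centered jointly Gaussian vector, and the It\^{o} isometry gives the crucial covariances
\[
\mathbf{E}[W_\xi Z_j]=\int_0^t\int\xi(s,y)\mathbf{g}_j(s,y)\,\dif y\,\dif s.
\]
This identifies the scalar product between the test integrand $\xi$ and $\mathbf{g}_j$ with a covariance of the Gaussian vector, which is exactly the bridge between the two sides of \cref{eq:GIP}.

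Next I would invoke the classical Gaussian integration by parts identity: for a centered jointly Gaussian vector $(U,V_1,\ldots,V_J)$ and a smooth $f$ of sufficient decay,
\[
\mathbf{E}\bigl[U\,f(V_1,\ldots,V_J)\bigr]=\sum_{j=1}^J\mathbf{E}[UV_j]\,\mathbf{E}\bigl[\partial_j f(V_1,\ldots,V_J)\bigr].
\]
This is proved in one line either by conditioning $U$ on $V$ and using the Gaussian regression representation $U=\sum_j c_j V_j+U'$ with $U'$ independent of $V$, or by differentiating the joint characteristic function and reading off the relation. Applying this with $U=W_\xi$ and $V_j=Z_j$ yields
\[
\mathbf{E}[YW_\xi]=\sum_{j=1}^J\left(\int_0^t\int\xi(s,y)\mathbf{g}_j(s,y)\,\dif y\,\dif s\right)\mathbf{E}\bigl[\partial_j f(Z_1,\ldots,Z_J)\bigr].
\]

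Finally I would match this with the right-hand side of \cref{eq:GIP}. By \cref{eq:malldef}, $\Dif_{s,y}Y=\sum_{j=1}^J\mathbf{g}_j(s,y)\,\partial_j f(Z_1,\ldots,Z_J)$. Multiplying by $\xi(s,y)$, integrating in $(s,y)$, taking expectation, and swapping the order of expectation and integration by Fubini, I obtain exactly the expression on the previous display, which closes the argument.

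There is no real obstacle here: the only issues are mild regularity/integrability conditions needed to justify both Stein's identity and the use of Fubini, and these are automatic from the smoothness of $f$ together with the Gaussian tails of $Z_1,\ldots,Z_J$ (one would either assume polynomial growth of $f$ and its derivatives, or approximate and pass to the limit). The core of the proof is the observation that the Malliavin derivative in this setting is nothing but the directional derivative of $f$ paired against the coefficient $\mathbf{g}(s,y)$, and Stein's identity turns covariances with $W_\xi$ into $L^2$ inner products of $\xi$ with those same coefficients.
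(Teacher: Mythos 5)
Your argument is correct. The paper itself does not give a proof of this proposition; it simply cites Lemma~1.2.1 of Nualart, which establishes the general duality between the Malliavin derivative and the Skorokhod integral. Your proof is a self-contained derivation of the special cylindrical case that the paper actually needs, and it is exactly the right thing: identify $Y=f(Z_1,\ldots,Z_J)$, compute the covariances $\mathbf{E}[W_\xi Z_j]$ by the It\^{o} isometry, invoke the finite-dimensional Gaussian (Stein) integration-by-parts identity, and then recognize via \cref{eq:malldef} that the resulting sum is precisely $\mathbf{E}\int_0^t\int \xi\,\Dif_{s,y}Y$. This is in fact the first step of Nualart's own proof, which proceeds by establishing the identity for cylindrical functionals and then extending by density; since the paper explicitly restricts to the cylindrical form, no density argument is needed. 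The only caveats you raise (polynomial growth of $f$ and its gradient, or approximation, to justify Stein's identity and Fubini) are the correct ones and are indeed automatic in the paper's applications.
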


For the proof, see \cite[Lemma 1.2.1]{Nua06}.

\begin{prop}[Gaussian Poincar\'e inequality]
We have
\begin{equation}
 \Var Y\le\int_0^t\int \mathbf{E}(\Dif_{s,y} Y)^2\,\dif y\,\dif s.\label{eq:GPI}
\end{equation}
\end{prop}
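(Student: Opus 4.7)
The plan is to reduce the claim to the classical finite-dimensional Gaussian Poincar\'e inequality via the explicit finite-dimensional representation of $Y$ assumed in this section. Writing $X=(X_1,\ldots,X_J)$ for the centered Gaussian vector with components $X_j=\int_0^t\int g_j(u,x)\,W(\dif x\,\dif u)$ and covariance matrix $\Sigma_{ij}=\int_0^t\int g_i(u,x)\,g_j(u,x)\,\dif x\,\dif u$, we have $Y=f(X)$. The formula \cref{eq:malldef} gives $\Dif_{s,y}Y=\mathbf{g}(s,y)\cdot\nabla f(X)$, so expanding the square and integrating $y,s$ yields
\[
\int_0^t\int \mathbf{E}(\Dif_{s,y}Y)^2\,\dif y\,\dif s = \mathbf{E}\bigl(\nabla f(X)^{\top}\,\Sigma\,\nabla f(X)\bigr).
\]
Hence the inequality is equivalent to the Poincar\'e bound $\Var(f(X))\le \mathbf{E}(\nabla f(X)^{\top}\Sigma\nabla f(X))$. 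Writing $X=\Sigma^{1/2}Z$ with $Z\sim N(0,I_J)$ and setting $g=f\circ\Sigma^{1/2}$, the chain rule gives $|\nabla g(Z)|^2=\nabla f(X)^{\top}\Sigma\nabla f(X)$, which reduces the claim to the standard form $\Var(g(Z))\le\mathbf{E}|\nabla g(Z)|^2$ for smooth $g:\mathbf{R}^J\to\mathbf{R}$.

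For the standard form I would use the Ornstein--Uhlenbeck semigroup $P_r g(x)=\mathbf{E}\,g\bigl(e^{-r}x+\sqrt{1-e^{-2r}}\,Z'\bigr)$ with $Z'\sim N(0,I_J)$ independent of $x$. The standard Gaussian measure is invariant under $P_r$ and $P_rg(Z)\to \mathbf{E}g(Z)$ in $L^2$ as $r\to\infty$, so
\[
\Var(g(Z))=-\int_0^\infty\frac{\dif}{\dif r}\mathbf{E}[(P_rg)(Z)^2]\,\dif r.
\]
Gaussian integration by parts applied to the generator $L=\Delta-x\cdot\nabla$ yields the Dirichlet form identity $\mathbf{E}[(P_rg)(Z)\,L(P_rg)(Z)]=-\mathbf{E}|\nabla P_rg(Z)|^2$, so the integrand is $2\mathbf{E}|\nabla P_rg(Z)|^2$. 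The commutation relation $\nabla P_rg=e^{-r}P_r\nabla g$ (differentiating under the expectation), Jensen's inequality $|P_r\nabla g|^2\le P_r|\nabla g|^2$, and invariance combine to give $\mathbf{E}|\nabla P_rg(Z)|^2\le e^{-2r}\mathbf{E}|\nabla g(Z)|^2$, and the geometric series in $r$ closes the bound.

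The main point requiring any attention is not a real obstacle: the derivative formulas and integration by parts need $g$ smooth with moderate growth, which is automatic given the assumption on $f$, and the limit $r\to\infty$ requires only dominated convergence against Gaussian tails. Since the classical finite-dimensional Gaussian Poincar\'e inequality is textbook material, I would expect a rigorous writeup to simply invoke a standard reference (e.g., the treatment of the Ornstein--Uhlenbeck operator in \cite{Nua06}) rather than reprove it, and deduce the stated version as the straightforward translation above.
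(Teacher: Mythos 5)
Your proof is correct, but it is worth noting that the paper does not actually supply a proof of this proposition; it simply cites \cite{HPA95} for the original result and \cite{NPR09} for the statement in the Wiener-space setting being used. So there is no ``paper's proof'' to compare against step by step. What you have done is supply a self-contained argument. Your reduction is appropriate here because the paper has deliberately restricted $Y$ to the finite-dimensional form $Y=f\bigl(\int_0^t\int\mathbf{g}\,W(\dif x\,\dif u)\bigr)$, so the infinite-dimensional machinery of \cite{HPA95} is not needed: the Gaussian vector $X$ with covariance $\Sigma$, the identity $\int_0^t\int\mathbf{E}(\Dif_{s,y}Y)^2\,\dif y\,\dif s=\mathbf{E}\bigl(\nabla f(X)^{\top}\Sigma\,\nabla f(X)\bigr)$, and the substitution $X=\Sigma^{1/2}Z$ reduce everything to the classical isotropic Poincar\'e inequality $\Var(g(Z))\le\mathbf{E}|\nabla g(Z)|^2$, which your Ornstein--Uhlenbeck semigroup argument establishes cleanly (the commutation $\nabla P_r=e^{-r}P_r\nabla$, Jensen, and invariance are exactly the standard ingredients). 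The trade-off is the usual one: the paper's citation-only approach keeps the exposition short and invokes a result valid for general Malliavin-differentiable $Y$, while your argument is elementary, self-contained, and transparent about why the inequality holds in the restricted class actually used, at the cost of a longer exposition and a formally narrower scope. Both are legitimate; for a paper of this length the citation is the more economical choice, and you yourself note that a rigorous writeup would likely just cite a standard reference.
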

This was proved in \cite{HPA95}; the statement in our setting was given in  \cite{NPR09}.

\section{Preliminary computations\label{sec:derivatives}}
Having introduced all necessary notation and results from the literature, we are now ready to begin our proofs. Throughout the rest of the paper, we will use $C$ to denote arbitrary universal constants, whose values may change from line to line. Sometimes $C'$, $C_1$ and $C_2$ will be used for the same purpose. As mentioned before, any integral without a specified domain of integration will denote integration over $\mathbf{T}^2$. Unless otherwise mentioned, $L^p$ norms will stand for $L^p$ norms over $\mathbf{T}^2$. We will assume that $\eps<1$ throughout, and sometimes even smaller. 
We will also frequently interchange expectations and integrals, and will move Malliavin derivatives inside integrals and expectations. Since the integrations take place over finite measure spaces and the functions under consideration are smooth in the variables that are differentiated, these manipulations are easily justified.
\subsection{Derivatives}

In this section we derive compute several quantities that appear in expressions for the moments
appear in the hypotheses of \cref{thm:tightness-fixedtime,thm:tightness-parabolic}, as well as in the derivatives of these moments. In \cref{sec:Taylorbound}, we will show how to use the derivatives to control Taylor-like expansions of the moments, while in \cref{sec:maintheoremproof}, we will show how to use these bounds to estimate the moments appearing in  \cref{thm:tightness-fixedtime,thm:tightness-parabolic} and thus prove \cref{thm:maintheorem-quantitative}. The reader may at this point wish to flip forward to \cref{lem:varcomp} to see how the expressions in this subsection appear in the variance bound.

In
order to
write our statements, we first need to introduce some notation. We
define the tilted probability measure $\widehat{\mathbb{P}}_{X^{t,x}}^{\eps, W, \theta}$
according to the Radon--Nikodym derivative
\begin{equation}
\frac{\dif\widehat{\mathbb{P}}_{X^{t,x}}^{\eps,W,\theta}}{\dif\mathbb{P}_{X^{t,x}}}=\frac{\mathscr{E}_{\theta,t}^{\eps}[W,X]}{\mathbb{E}_{X^{t,x}}\mathscr{E}_{\theta,t}^{\eps}[W,X]}\label{eq:RNderiv-1}
\end{equation}
and let $\widehat{\mathbb{E}}_{X^{t,x}}^{\eps,W,\theta}$ denote expectation
with respect to this measure. (Here, $\mathbb{P}_{X^{t,x}}$ is the
measure corresponding to $\mathbb{E}_{X^{t,x}}$ defined in \cref{sec:CH}.)
Note that the Radon--Nikodym derivative \cref{eq:RNderiv-1} is random,
as it depends on the noise.

Our derivative computations will involve functions of multiple Brownian
paths. If $\mathbf{x}=(x_{1},\ldots,x_{J})$ and $\mathbf{X}=(X_{1},\ldots,X_{J})$,
we will frequently use the shorthand notations
\[
\mathbb{P}_{\mathbf{X}^{t,\mathbf{x}}}=\mathbb{P}_{X_{1}^{t,x_{1}},\ldots,X_{J}^{t,x_{J}}}:=\mathbb{P}_{X_{1}^{t,x_{1}}}\otimes\cdots\otimes\mathbb{P}_{X_{J}^{t,x_{J}}}
\]
and
\[
\widehat{\mathbb{P}}_{\mathbf{X}^{t,\mathbf{x}}}^{\theta,W,\eps}=\widehat{\mathbb{P}}_{X_{1}^{t,x_{1}},\ldots,X_{J}^{t,x_{J}}}^{\theta,W,\eps}:=\widehat{\mathbb{P}}_{X_{1}^{t,x_{1}}}^{\theta,W,\eps}\otimes\cdots\otimes\widehat{\mathbb{P}}_{X_{J}^{t,x_{J}}}^{\theta,W,\eps}.
\]
We also define
\[
\mathscrbf{E}_{\theta,t}^{\eps}[W,\mathbf{X}]=\prod_{j=1}^{J}\mathscr{E}_{\theta,t}^{\eps}[W,X_{j}],
\]
so that
\[
\frac{\dif\widehat{\mathbb{P}}_{\mathbf{X}^{t,\mathbf{x}}}^{\eps,W,\theta}}{\dif\mathbb{P}_{\mathbf{X}^{t,\mathbf{x}}}}=\frac{\mathscrbf{E}_{\theta,t}^{\eps}[W,\mathbf{X}]}{\mathbb{E}_{\mathbf{X}^{t,\mathbf{x}}}\mathscrbf{E}_{\theta,t}^{\eps}[W,\mathbf{X}]}.
\]
The product measures defined above will often be used in the following way. Suppose that we want to evaluate $(\widehat{\mathbb{E}}_{X^{t,x}}^{\eps,W,\theta}\mathscr{Q}(X))^2$ for some functional $\mathscr{Q}$. Then we will use the representation
\[
(\widehat{\mathbb{E}}_{X^{t,x}}^{\eps,W,\theta}\mathscr{Q}(X))^2 = \widehat{\mathbb{E}}_{X_1^{t,x}, X_2^{t,x}}^{\eps,W,\theta}(\mathscr{Q}(X_1)\mathscr{Q}(X_2)),
\]
which conveniently allows exchange of expectations and integrals in many places, which we would not be able to achieve with the expectation squared.

Next, the intersection time of two paths $X_{1},X_{2}$ is defined as 
\[
\mathscr{I}_{t}^{\eps}[X_{1},X_{2}]=\int_{0}^{t}R^{\eps}(X_{1}(s)-X_{2}(s))\,\dif s,
\]
where $R^\eps$ is defined by the $\mathbf{T}^2$-convolution
\[
R^{\eps}=\rho^{\eps}*\rho^{\eps}.
\]
Since $\rho$ is an even function, note that
\begin{align*}
R^\eps(0) &= \int \rho^\eps(x)^2 \, \dif x\\
&= \int_{\mathbf{R}^2}\eps^{-4}\rho(\eps^{-1}x)^2 \, \dif x = \int_{\mathbf{R}^2} \eps^{-2} \rho(y)^2 \, \dif y. 
\end{align*}
Thus, for any path $X$,
\begin{equation}\label{eq:itsame}
\mathscr{I}_t^\eps[X, X] = \frac{t}{\eps^2} \|\rho\|_{L^2}^2. 
\end{equation}
We will have an important use for the above identity later. 
We now proceed with our derivative computations. We first note that
\begin{align}
\frac{\partial}{\partial\theta}\mathscr{E}_{\theta,t}^{\eps}[W,X]&=\biggl(\frac{1}{2}(\theta|\log\eps|)^{-\frac{1}{2}}\int_{0}^{t}\int \rho^{\eps}(X(s)-y)W(\dif y\,\dif s)\notag\\
&\qquad \qquad -|\log\eps|^{-\frac{1}{2}}\frac{\partial\kappa_{\theta}^{\eps}(t)}{\partial\theta}\biggr)\mathscr{E}_{\theta,t}^{\eps}[W,X].\label{eq:dbetaFunnyE}
\end{align}
We can also compute the Malliavin derivative $\Dif_{s,y}$  with respect
to the white noise. A simple calculation using \cref{eq:malldef} gives
\begin{equation}
\Dif_{s,y}\mathscr{E}_{\theta,t}^{\eps}[W,X]=\theta^{\frac{1}{2}}|\log\eps|^{-\frac{1}{2}}\rho^{\eps}(X(s)-y)\mathscr{E}_{\theta,t}^{\eps}[W,X].\label{eq:detaE}
\end{equation}
The following lemma is a more involved derivative computation. Let
$\mathbf{X}=(X_{1},\ldots,X_{J})$, $\widetilde{\mathbf{X}}=(\widetilde{X}_{1},\ldots,\widetilde{X}_{J})$,
and $\mathbf{x}=(x_{1},\ldots,x_{J})$. Suppose that $\mathscr{Q}:\mathcal{C}([0,t])^{J}\to\mathbf{R}$
is measurable.
\begin{lem}
\label{lem:Pderivs}We have the derivative formula 
\begin{align}
&\frac{\partial}{\partial\theta}\widehat{\mathbb{E}}_{\mathbf{X}^{t,\mathbf{x}}}^{\theta,W,\eps}\mathscr{Q}[\mathbf{X}]\notag\\
&=\frac{1}{2}(\theta|\log\eps|)^{-\frac{1}{2}}\int_{0}^{t}\int\sum_{k=1}^{J}\widehat{\mathbb{E}}_{\mathbf{X}^{t,\mathbf{x}},\widetilde{\mathbf{X}}^{t,\mathbf{x}}}^{\theta,W,\eps}\biggl(\mathscr{Q}[\mathbf{X}](\rho^{\eps}(X_{k}(s)-y)\notag \\
&\qquad \qquad \qquad\qquad \qquad -\rho^{\eps}(\widetilde{X}_{k}(s)-y))\biggr)W(\dif y\,\dif s).\label{eq:ddtheta}
\end{align}
Moreover, whenever $s\in[0,t]$ we have
\begin{align}
&\Dif_{s,y}\frac{\mathscrbf{E}_{\theta,t}^{\eps}[W,\mathbf{X}]}{\mathbb{E}_{\mathbf{X}^{t,\mathbf{x}}}\mathscrbf{E}_{\theta,t}^{\eps}[W,\mathbf{X}]}\notag\\
&=\theta^{\frac{1}{2}}|\log\eps|^{-\frac{1}{2}}\sum_{k=1}^{J}\biggl[\frac{\rho^{\eps}(X_{k}(s)-y)\mathscrbf{E}_{\theta,t}^{\eps}[W,\mathbf{X}]}{\mathbb{E}_{\mathbf{X}^{t,\mathbf{x}}}\mathscrbf{E}_{\theta,t}^{\eps}[W,\mathbf{X}]}\notag\\
&\qquad \qquad-\frac{\mathscrbf{E}_{\theta,t}^{\eps}[W,\mathbf{X}]\mathbb{E}_{\mathbf{X}^{t,\mathbf{x}}}(\rho^{\eps}(X_{k}(s)-y)\mathscrbf{E}_{\theta,t}^{\eps}[W,\mathbf{X}])}{(\mathbb{E}_{\mathbf{X}^{t,\mathbf{x}}}\mathscrbf{E}_{\theta,t}^{\eps}[W,\mathbf{X}])^{2}}\biggr],\label{eq:malliavinderiv-noE}
\end{align}
and
\begin{align}
&\Dif_{s,y}\widehat{\mathbb{E}}_{\mathbf{X}^{t,\mathbf{x}}}^{\theta,W,\eps}\mathscr{Q}[\mathbf{X}]\notag\\
&=\sum_{k=1}^{J}\theta^{\frac{1}{2}}|\log\eps|^{-\frac{1}{2}}\widehat{\mathbb{E}}_{\mathbf{X}^{t,\mathbf{x}},\widetilde{\mathbf{X}}^{t,\mathbf{x}}}^{\theta,W,\eps}\mathscr{Q}[\mathbf{X}][\rho^{\eps}(X_{k}(s)-y)-\rho^{\eps}(\widetilde{X}_{k}(s)-y)].\label{eq:malliavinderiv}
\end{align}
\end{lem}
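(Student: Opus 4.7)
The plan is to prove all three identities by direct calculation, using the product rule, the quotient rule, the scalar formulas \cref{eq:dbetaFunnyE,eq:detaE}, and the multiplicative structure $\mathscrbf{E}^\eps_{\theta,t}[W,\mathbf{X}]=\prod_{j=1}^J\mathscr{E}^\eps_{\theta,t}[W,X_j]$. The single recurring trick is to recognize that $(\mathbb{E}_{\mathbf{X}^{t,\mathbf{x}}}\mathscrbf{E}^\eps_{\theta,t})^{-1}\mathbb{E}_{\mathbf{X}^{t,\mathbf{x}}}(F[\mathbf{X}]\mathscrbf{E}^\eps_{\theta,t})$ equals $\widehat{\mathbb{E}}^{\theta,W,\eps}_{\mathbf{X}^{t,\mathbf{x}}}F[\mathbf{X}]$, and that a product of two such tilted expectations factors as a single expectation over an independent copy $\widetilde{\mathbf{X}}$ of $\mathbf{X}$ under the product measure $\widehat{\mathbb{P}}^{\theta,W,\eps}_{\mathbf{X}^{t,\mathbf{x}},\widetilde{\mathbf{X}}^{t,\mathbf{x}}}$.

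For \cref{eq:ddtheta}, I will write $\widehat{\mathbb{E}}^{\theta,W,\eps}_{\mathbf{X}^{t,\mathbf{x}}}\mathscr{Q}[\mathbf{X}]$ as the ratio $\mathbb{E}_{\mathbf{X}^{t,\mathbf{x}}}(\mathscr{Q}[\mathbf{X}]\mathscrbf{E}^\eps_{\theta,t})/\mathbb{E}_{\mathbf{X}^{t,\mathbf{x}}}\mathscrbf{E}^\eps_{\theta,t}$ and differentiate in $\theta$ by the quotient rule. By Leibniz combined with \cref{eq:dbetaFunnyE},
\[
\partial_\theta\mathscrbf{E}^\eps_{\theta,t}[W,\mathbf{X}]=\mathscrbf{E}^\eps_{\theta,t}[W,\mathbf{X}]\sum_{k=1}^J\Bigl[\tfrac{1}{2}(\theta|\log\eps|)^{-\frac{1}{2}}\int_0^t\int\rho^\eps(X_k(s)-y)\,W(\dif y\,\dif s)-|\log\eps|^{-\frac{1}{2}}\partial_\theta\kappa^\eps_\theta(t)\Bigr].
\]
The deterministic $\kappa$-piece is path-independent and cancels exactly between the numerator and denominator contributions of the quotient rule. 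The surviving stochastic-integral pieces, after the independent-copy identification, assemble into the single double-tilted expectation displayed in \cref{eq:ddtheta}, and the outer $W(\dif y\,\dif s)$ integral pulls out past the finite-measure Brownian expectations.

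The two Malliavin identities follow the same template, with \cref{eq:detaE} in place of \cref{eq:dbetaFunnyE}. For \cref{eq:malliavinderiv-noE} I will apply the quotient rule to $\mathscrbf{E}^\eps_{\theta,t}/\mathbb{E}_{\mathbf{X}^{t,\mathbf{x}}}\mathscrbf{E}^\eps_{\theta,t}$, with $\Dif_{s,y}\mathscrbf{E}^\eps_{\theta,t}=\theta^{1/2}|\log\eps|^{-1/2}\mathscrbf{E}^\eps_{\theta,t}\sum_k\rho^\eps(X_k(s)-y)$ obtained from Leibniz and \cref{eq:detaE}, and using that $\Dif_{s,y}$ commutes with $\mathbb{E}_{\mathbf{X}^{t,\mathbf{x}}}$ because the noise $W$ and the Brownian paths $\mathbf{X}$ live on independent probability spaces. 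Then \cref{eq:malliavinderiv} is obtained by multiplying \cref{eq:malliavinderiv-noE} by $\mathscr{Q}[\mathbf{X}]$, integrating against $\mathbb{P}_{\mathbf{X}^{t,\mathbf{x}}}$, and packaging the cross term as a tilted expectation over an independent copy $\widetilde{\mathbf{X}}$ of $\mathbf{X}$.

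There is no substantive obstacle; the real care is the bookkeeping that forces the two quotient-rule terms to reassemble into the symmetric $\rho^\eps(X_k(s)-y)-\rho^\eps(\widetilde{X}_k(s)-y)$ pattern, and in tracking the factors of $\theta^{1/2}$ and $|\log\eps|^{-1/2}$. The interchanges of $\partial_\theta$, $\Dif_{s,y}$, $\mathbb{E}_{\mathbf{X}^{t,\mathbf{x}}}$, and the stochastic integral against $W$ are legitimate because $\rho^\eps$ is smooth and bounded, $\mathbb{P}_{\mathbf{X}^{t,\mathbf{x}}}$ is a finite measure, and $\mathscrbf{E}^\eps_{\theta,t}$ is the exponential of a Gaussian linear functional with moments of all orders in the relevant parameters --- exactly the kind of manipulation sanctioned in the opening remarks of \cref{sec:derivatives}.
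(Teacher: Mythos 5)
Your proposal is correct and follows essentially the same route as the paper: write the tilted expectation as a ratio, differentiate in $\theta$ (respectively take $\Dif_{s,y}$) via Leibniz plus \cref{eq:dbetaFunnyE} (respectively \cref{eq:detaE}), apply the quotient rule, observe that the deterministic $\kappa$-term drops out, and re-express the cross terms as a tilted expectation over an independent copy $\widetilde{\mathbf{X}}$. The only cosmetic difference is that you spell out the interchange of the outer $W$-integral with the finite-measure Brownian expectations and the commutation of $\Dif_{s,y}$ with $\mathbb{E}_{\mathbf{X}^{t,\mathbf{x}}}$, which the paper covers implicitly in the opening remarks of \cref{sec:derivatives}.
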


\begin{lem}
\label{lem:fullexpexpansion}We have
\begin{align}
\frac{\partial}{\partial\theta} & \mathbf{E}\widehat{\mathbb{E}}_{\mathbf{X}^{t,\mathbf{x}}}^{\theta,W,\eps}\mathscr{Q}[\mathbf{X}]\nonumber \\
 & =\frac{1}{2}|\log\eps|^{-1}\mathbf{E}\widehat{\mathbb{E}}_{\mathbf{X}^{t,\mathbf{x}},\widetilde{\mathbf{X}}^{t,\mathbf{x}},\widetilde{\widetilde{\mathbf{X}}}^{t,\mathbf{x}}}^{\theta,W,\eps}\biggl[\mathscr{Q}[\mathbf{X}]\sum_{\substack{k,\ell=1}
}^{J}\biggl(\mathscr{I}_{t}^{\eps}[X_{k},X_{\ell}]\mathbf{1}_{k\ne \ell}\notag\\
&\qquad -2\mathscr{I}_{t}^{\eps}[X_{k},\widetilde{X}_{\ell}]+(1+\mathbf{1}_{k=\ell})\mathscr{I}_{t}^{\eps}[\widetilde{X}_{k},\widetilde{\widetilde{X}}_{\ell}]\biggr)\biggr].\label{eq:fullexpexpansioneq}
\end{align}
\end{lem}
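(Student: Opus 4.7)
The plan is to differentiate under the expectation, apply Gaussian integration by parts to eliminate the resulting white-noise integral, and then collect intersection-time contributions using the exchangeability of auxiliary Brownian copies. First, I would start from \cref{eq:ddtheta} of \cref{lem:Pderivs} and interchange $\partial_\theta$ with $\mathbf{E}$ (justified by the smoothness of the mollified integrand at any fixed $\eps>0$) to obtain
\begin{equation*}
\frac{\partial}{\partial\theta}\mathbf{E}\widehat{\mathbb{E}}_{\mathbf{X}^{t,\mathbf{x}}}^{\theta,W,\eps}\mathscr{Q}[\mathbf{X}] = \frac{1}{2}(\theta|\log\eps|)^{-\frac{1}{2}}\,\mathbf{E}\int_{0}^{t}\int G_{s,y}\,W(\dif y\,\dif s),
\end{equation*}
where $G_{s,y} := \sum_{k=1}^J \widehat{\mathbb{E}}^{\theta,W,\eps}_{\mathbf{X}^{t,\mathbf{x}},\widetilde{\mathbf{X}}^{t,\mathbf{x}}}[\mathscr{Q}[\mathbf{X}](\rho^\eps(X_k(s)-y)-\rho^\eps(\widetilde{X}_k(s)-y))]$.

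Second, using Fubini and the Gaussian integration by parts identity \cref{eq:GIP}, the expected stochastic integral rewrites as $\int_0^t\int \mathbf{E}\Dif_{s,y}G_{s,y}\,\dif y\,\dif s$. Since the $\rho^\eps$ factors inside $G_{s,y}$ are noise-independent, only the $\widehat{\mathbb{E}}$ prefactor is Malliavin-differentiated, and I would apply \cref{eq:malliavinderiv} in the $2J$-path setting so that each existing path generates one fresh tilted copy. This yields two families of terms: one indexed by $p=X_\ell \in \mathbf{X}$ with matching new copy $X'_\ell$, and one indexed by $p=\widetilde{X}_\ell \in \widetilde{\mathbf{X}}$ with matching new copy $\widetilde{X}'_\ell$. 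The two prefactors multiply as $\frac{1}{2}(\theta|\log\eps|)^{-1/2}\cdot \theta^{1/2}|\log\eps|^{-1/2} = \frac{1}{2}|\log\eps|^{-1}$, matching the claimed constant.

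Third, the $y$-integration collapses every pair of $\rho^\eps$ factors into an evaluation of $R^\eps$ via $\int \rho^\eps(a-y)\rho^\eps(b-y)\,\dif y = R^\eps(a-b)$, and the $s$-integration then produces intersection-time functions $\mathscr{I}_t^\eps[\cdot,\cdot]$. Expanding the product $\phi_k\psi_p$ gives $8$ signed raw terms $\pm\mathscr{I}_t^\eps[Y_k,Z_\ell]$, where $Y_k\in\{X_k,\widetilde{X}_k\}$ and $Z_\ell\in\{X_\ell,\widetilde{X}_\ell,X'_\ell,\widetilde{X}'_\ell\}$. I would simplify these using two symmetries valid under $\mathbf{E}\widehat{\mathbb{E}}[\mathscr{Q}[\mathbf{X}]\,\cdot\,]$: the three auxiliary collections $\widetilde{\mathbf{X}},\mathbf{X}',\widetilde{\mathbf{X}}'$ are exchangeable iid copies (since only $\mathbf{X}$ enters $\mathscr{Q}$), so they may be relabeled uniformly as $\widetilde{\mathbf{X}}$ and $\widetilde{\widetilde{\mathbf{X}}}$; and the index relabeling $k\leftrightarrow\ell$ yields $\sum_{k,\ell}\mathscr{I}_t^\eps[\widetilde{X}_k,X_\ell] = \sum_{k,\ell}\mathscr{I}_t^\eps[X_k,\widetilde{X}_\ell]$. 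Grouping off-diagonal terms ($k\ne\ell$) after this collapse gives $\mathscr{I}_t^\eps[X_k,X_\ell] - 2\mathscr{I}_t^\eps[X_k,\widetilde{X}_\ell] + \mathscr{I}_t^\eps[\widetilde{X}_k,\widetilde{\widetilde{X}}_\ell]$.

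The main obstacle will be the bookkeeping at the diagonal $k=\ell$, where two of the raw terms degenerate into self-intersections rather than crossings of independent paths: the Group-A term $\mathscr{I}_t^\eps[X_k,X_k]$ with coefficient $+1$ and the same-aux-copy part of Group C, namely $-\mathscr{I}_t^\eps[\widetilde{X}_k,\widetilde{X}_k]$. By \cref{eq:itsame}, both equal the deterministic constant $t\|\rho\|_{L^2}^2/\eps^2$ and so cancel exactly. It is precisely this cancellation that converts the coefficient on $\mathscr{I}_t^\eps[X_k,X_\ell]$ into $\mathbf{1}_{k\ne\ell}$ and the coefficient on $\mathscr{I}_t^\eps[\widetilde{X}_k,\widetilde{\widetilde{X}}_\ell]$ into $1+\mathbf{1}_{k=\ell}$, yielding the stated formula. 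No further analytic estimate is needed; only careful sign and multiplicity tracking through the $8$-term expansion.
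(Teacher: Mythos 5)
Your proposal is correct and follows essentially the same route as the paper's proof: start from \cref{eq:ddtheta}, interchange $\partial_\theta$ with $\mathbf{E}$, apply Gaussian integration by parts to trade the stochastic integral for a Malliavin derivative, invoke \cref{eq:malliavinderiv} on the $2J$-path tilted measure, integrate out $y$ to produce $R^\eps$ and hence $\mathscr{I}_t^\eps$, and then use the exchangeability of the auxiliary copies together with the $k\leftrightarrow\ell$ symmetry and the identity \cref{eq:itsame} on the diagonal. The paper groups the post-integration terms into exactly the same eight-term expansion (called $A_{k,\ell}(s,y)$ there after the paths $\mathbf{X}', \widetilde{\mathbf{X}}'$ are relabeled as $\widetilde{\widetilde{\mathbf{X}}}$ term by term), so your bookkeeping mirrors theirs; the only minor looseness is the phrase ``relabeled uniformly,'' since the relabeling is valid term by term (at most one of $\mathbf{X}',\widetilde{\mathbf{X}}'$ appears in any single intersection-time factor), not as a global substitution --- but this does not affect the argument.
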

We defer the proofs of \cref{lem:Pderivs} and \cref{lem:fullexpexpansion}
to \cref{subsec:derivcomps}.

\subsection{Brownian motion intersection estimates\label{sec:BMintersectionests}}

In this section we state the results about Brownian motion that
we will need to prove our theorems. The Brownian motion estimates are
quite standard, so we defer the proofs to \cref{subsec:BMcomps}.
The underlying probabilistic facts behind the following lemmas are that
a Brownian motion started at the origin in $\mathbf{R}^{2}$ and run
for time $t\gg1$ will spend time on the order $\log t$ in a
unit ball around the origin, and that a random walk started at distance
$\sqrt{t}$ from the origin in $\mathbf{R}^{2}$ and run for $t$
steps will reach the unit ball around the origin with probability
on the order of $1/\log t$, but conditional on that event
will again spend on the order of $\log t$ steps in the unit ball. 

We will use the notation $|x|_{\mathbf{T}^{2}}$ to mean the distance
in the torus of $x$ from the origin, that is, 
\[
|x|_{\mathbf{T}^{2}}=\min_{y\in\mathbf{Z}^{2}}|x+y|,
\]
where $|x+y|$ is the usual Euclidean norm of $x+y$.

\begin{lem}
\label{lem:logupperbound}There is an absolute constant $C$ so that if $Y\in C([0,t],\mathbf{T}^{2})$
is a deterministic path and $\eps\le\e^{-t/2}$ then we have
\[
\mathbb{E}_{X^{t,x}}\mathscr{I}_{t}^{\eps}[X,Y]^{r}\le C^{r}r!|\log\eps|^{r}.
\]
\end{lem}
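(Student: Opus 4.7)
The plan is to expand $\mathscr{I}_t^\eps[X,Y]^r$ as a time-ordered integral, use the Markov property of $X$ to reduce to a product of single-step kernel estimates, and control the resulting one-dimensional integral via standard heat-kernel bounds. Since $Y$ is deterministic, I will first note that reversing time turns $X^{t,x}$ into an ordinary forward Brownian motion on $\mathbf{T}^2$ started at $x$ against the deterministic path $s \mapsto Y(t-s)$, so I may assume $X$ is forward. Expanding the $r$-th power and applying Fubini yields
\[
\mathbb{E}_{X^{t,x}}\mathscr{I}_t^\eps[X,Y]^r = r!\int_{0<s_1<\cdots<s_r<t}\mathbb{E}\prod_{j=1}^r R^\eps(X(s_j)-Y(t-s_j))\,\dif\mathbf{s}.
\]

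Next, writing $\tau_j := s_j - s_{j-1}$ with $s_0 := 0$, and successively conditioning on $X(s_{r-1}), X(s_{r-2}), \ldots$, the Markov property gives, for any $z,y \in \mathbf{T}^2$,
\[
\int R^\eps(w-y)\,p^{\mathbf{T}^2}_{\tau}(z,w)\,\dif w = (R^\eps * p^{\mathbf{T}^2}_\tau)(y-z) \le \|R^\eps * p^{\mathbf{T}^2}_\tau\|_{L^\infty},
\]
where $p^{\mathbf{T}^2}_\tau$ is the torus heat kernel. Iterating bounds the inner expectation by $\prod_{j=1}^r \|R^\eps * p^{\mathbf{T}^2}_{\tau_j}\|_{L^\infty}$, reducing the problem to estimating a single-variable integral.

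For the kernel estimate, Young's inequality applied to $R^\eps * p^{\mathbf{T}^2}_\tau$, together with $\|R^\eps\|_{L^1} = 1$, $\|R^\eps\|_{L^\infty} = \eps^{-2}\|\rho\|_{L^2}^2$, and the standard bounds $\|p^{\mathbf{T}^2}_\tau\|_{L^\infty} \le C/\tau$ for $\tau \le 1$ and $\|p^{\mathbf{T}^2}_\tau\|_{L^\infty} \le C$ for $\tau \ge 1$, gives
\[
\|R^\eps * p^{\mathbf{T}^2}_\tau\|_{L^\infty} \le \frac{C}{\tau+\eps^2}\mathbf{1}_{\tau\le 1} + C\mathbf{1}_{\tau>1}.
\]
Integrating over $\tau \in [0,t]$ produces $C(1 + |\log\eps| + t)$. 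Here the hypothesis $\eps \le \e^{-t/2}$, equivalent to $t \le 2|\log\eps|$, is exactly what absorbs the linear-in-$t$ contribution from the large-$\tau$ regime (where the torus heat kernel does not decay) into an overall $C'|\log\eps|$ bound. This is the only nontrivial step in the argument.

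Finally, changing variables from $s_j$ to $\tau_j$, dropping the constraint $\sum \tau_j \le t$, and separating the integrals gives
\[
\mathbb{E}_{X^{t,x}}\mathscr{I}_t^\eps[X,Y]^r \le r!\left(\int_0^t \|R^\eps * p^{\mathbf{T}^2}_\tau\|_{L^\infty}\,\dif\tau\right)^r \le r!\,(C|\log\eps|)^r,
\]
as required. The other steps are routine consequences of Fubini and the Markov property; the key technical ingredient is the interplay between the regularization scale $\eps$ and the time horizon $t$ enforced by the hypothesis.
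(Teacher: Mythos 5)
Your proof is correct and follows essentially the same route as the paper: expand the $r$-th power as a time-ordered integral, use the Markov property (backward induction in the paper, successive conditioning in yours) to reduce to a product of one-step bounds of the form $\|R^\eps * p_\tau\|_{L^\infty}$, apply Young's inequality with the two choices $\|R^\eps\|_{L^1}\|p_\tau\|_{L^\infty}$ and $\|R^\eps\|_{L^\infty}\|p_\tau\|_{L^1}$, and integrate the resulting $C\bigl(\tau\wedge 1 \vee \eps^2\bigr)^{-1}$-type bound, absorbing the linear-in-$t$ term via $t \le 2|\log\eps|$. The reversal to a forward Brownian motion is a cosmetic rephrasing of what the paper does directly with the backward process.
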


\begin{lem}
\label{lem:X0Yest}There is an absolute constant $C$ so that as long as $\eps\le\e^{-t/2}$, 
we have
\begin{equation}
\mathbb{E}_{X_{1}^{t,x_{1}},X_{2}^{t,x_{2}}}\mathscr{I}_{t}^{\eps}[X_{1},X_{2}]^{r}
\le C^{r}r!(t+1+ \log|x_{1}-x_{2}|_{\mathbf{T}^{2}}^{-2})|\log\eps|^{r-1}.\label{eq:IX0Ymoments}
\end{equation}
\end{lem}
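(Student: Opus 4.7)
The plan is to reduce to a single Brownian motion on the torus and apply Kac's moment formula. Since $X_1$ and $X_2$ are independent Brownian motions on $\mathbf{T}^2$, the difference $Y(s) := X_1(s) - X_2(s)$ is a Brownian motion on $\mathbf{T}^2$ (with doubled variance, which affects only constants) starting at $z := x_1 - x_2$, and $\mathscr{I}_t^\eps[X_1, X_2] = \int_0^t R^\eps(Y(s))\, ds$. Kac's moment formula combined with the Markov property then gives
\[
\mathbb{E}\mathscr{I}_t^\eps[X_1, X_2]^r = r!\int_{(\mathbf{T}^2)^r}\prod_{i=1}^r R^\eps(y_i)\int_{\{u_i > 0,\,\sum_j u_j \le t\}}\prod_{i=1}^r q_{u_i}(y_i - y_{i-1})\,du\, dy,
\]
where $y_0 = z$, the $u_i$ are the successive time increments, and $q_s$ is the transition density of $Y$. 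Enlarging the domain of $u$-integration to the box $(0, t]^r$ (an upper bound since the integrand is nonnegative) reduces the inner integral to $\prod_i G_t(y_i - y_{i-1})$, where $G_t(w) := \int_0^t q_s(w)\, ds$ is the truncated Green's function on $\mathbf{T}^2$.

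The next step is to estimate $G_t(w) \le C(1 + t + \log|w|_{\mathbf{T}^2}^{-2})$ uniformly in $w$: splitting the integral at $s = 1$, the small-time piece is controlled by the Euclidean heat kernel and produces the logarithm, while the tail is bounded by $Ct$ since $q_s$ is uniformly bounded on $\mathbf{T}^2$ for $s \ge 1$. With this in hand, I would iterate two bounds on the $y_i$-integrations. For any $y_{i-1}$ in the $\eps$-ball around the origin,
\[
\int R^\eps(y_i)G_t(y_i - y_{i-1})\, dy_i \le C(1 + t + |\log\eps|),
\]
which uses $\|R^\eps\|_{L^\infty} \le C\eps^{-2}$ together with $\int_{|w| \le 2\eps}\log|w|^{-2}\, dw \le C\eps^2|\log\eps|$. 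For the initial step,
\[
\int R^\eps(y_1)G_t(y_1 - z)\, dy_1 \le C(1 + t + \log|z|_{\mathbf{T}^2}^{-2}),
\]
handling $|z|_{\mathbf{T}^2} > 2\eps$ (where $|y_1 - z|_{\mathbf{T}^2} \ge |z|_{\mathbf{T}^2}/2$ on the support of $R^\eps$) separately from $|z|_{\mathbf{T}^2} \le 2\eps$ (where $\log\eps^{-2} \le \log|z|_{\mathbf{T}^2}^{-2} + C$, so the first bound above is enough).

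Iterating these estimates yields $\mathbb{E}\mathscr{I}_t^\eps[X_1, X_2]^r \le C^r r!(1 + t + \log|z|_{\mathbf{T}^2}^{-2})(1 + t + |\log\eps|)^{r-1}$, and the hypothesis $\eps \le \e^{-t/2}$ gives $|\log\eps| \ge t/2$, so $1 + t + |\log\eps| \le C|\log\eps|$, producing the claimed bound. The main technical obstacle will be ensuring that exactly $r - 1$ factors of $|\log\eps|$ appear rather than $r$: this requires treating the first Green's function (rooted at the general point $z$, which can be far from the support of $R^\eps$) differently from all subsequent ones, whose two arguments are both constrained to lie within distance $O(\eps)$ of each other.
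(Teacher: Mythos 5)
Your proof is correct, and it follows the same basic blueprint as the paper (Kac moment formula with ordered times; isolate the one factor anchored at $z=x_1-x_2$ to extract $\log|z|_{\mathbf{T}^2}^{-2}$, and let each of the other $r-1$ factors contribute one power of $|\log\eps|$). The implementation differs modestly: the paper stays in time coordinates throughout, conditioning step-by-step on the backward filtration and using the pointwise-in-time bound $\mathbb{E}\,R^\eps(\text{increment}+z)\le C\big((\Delta s\wedge1)\vee\eps^2\big)^{-1}$ before integrating the product over the simplex, whereas you first relax the simplex to the box $(0,t]^r$, collapse the time integral into a product of truncated Green's functions $G_t(y_i-y_{i-1})$, and then iterate the spatial integrations against $R^\eps$. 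Your route makes the ``$r-1$ factors of $|\log\eps|$ vs.\ one factor of $\log|z|^{-2}$'' bookkeeping slightly more visible, at the cost of needing the spatial bounds $G_t(w)\le C(1+t+\log|w|_{\mathbf{T}^2}^{-2})$ and $\int R^\eps(y)G_t(y-y')\,\dif y\le C(1+t+|\log\eps|)$ for $|y'|\lesssim\eps$; these are the direct analogues of the paper's Lemmas~\ref{lem:heat} and~\ref{lem:rlemma} and the estimate \cref{eq:ER}. One small point worth recording in a write-up: the support of $R^\eps=\rho^\eps*\rho^\eps$ has radius $\sqrt 2\,\eps$ rather than $\eps$, so the thresholds in the two-case analysis for the anchored factor should be scaled accordingly (this changes only constants).
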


\subsection{The renormalization constant}

The following lemma allows us to give a somewhat more explicit
expression for the renormalization constant $\kappa_{\theta}^{\eps}(t)$
defined in \cref{eq:renormalizationconstantexpdef}.
\begin{lem}
\label{lem:expcomp}We have
\begin{equation}
\kappa_{\theta}^{\eps}(t)=\frac{1}{2}|\log\eps|^{-\frac{1}{2}}\left(\frac{\theta t}{\eps^{2}}\|\rho\|_{L^{2}}^{2}-\int_{0}^{\theta}\mathbf{E}\widehat{\mathbb{E}}_{X^{t,x},\widetilde{X}^{t,x}}^{\zeta,W,\eps}\mathscr{I}_{t}^{\eps}[X,\widetilde{X}]\,\dif\zeta\right).\label{eq:renormdef}
\end{equation}
\end{lem}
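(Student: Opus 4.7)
The plan is to differentiate $\kappa_\theta^\eps(t)=\mathbf{E}\widetilde{h}_\theta^\eps(t,x)$ in $\theta$ and integrate back, using the boundary value $\kappa_0^\eps(t)=0$. This holds directly from the Feynman--Kac representation \cref{eq:fkform}: at $\theta=0$, $\widetilde{h}_0^\eps(t,x)=|\log\eps|^{1/2}\log\mathbb{E}_{X^{t,x}}[1]=0$. Writing $\mathscr{W}[X]:=\int_0^t\int\rho^\eps(X(s)-y)W(\dif y\,\dif s)$, so that $\widetilde{h}_\theta^\eps(t,x)=|\log\eps|^{1/2}\log\mathbb{E}_{X^{t,x}}\exp\{\theta^{1/2}|\log\eps|^{-1/2}\mathscr{W}[X]\}$, a direct differentiation in $\theta$ (justified because $\mathscr{W}[X]$ is Gaussian with variance uniformly bounded in the path by $t\eps^{-2}\|\rho\|_{L^2}^2$) yields
\[
\partial_\theta\kappa_\theta^\eps(t)=\tfrac{1}{2}\theta^{-1/2}\mathbf{E}\widehat{\mathbb{E}}_{X^{t,x}}^{\theta,W,\eps}\mathscr{W}[X],
\]
where the $X$-independent factor $\exp\{-|\log\eps|^{-1/2}\kappa_\theta^\eps(t)\}$ hidden in $\mathscr{E}_{\theta,t}^\eps$ cancels between numerator and denominator of the ratio defining $\widehat{\mathbb{E}}_{X^{t,x}}^{\theta,W,\eps}$.

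The key step is to evaluate $\mathbf{E}\widehat{\mathbb{E}}_{X^{t,x}}^{\theta,W,\eps}\mathscr{W}[X]$ via Gaussian integration by parts. Setting $P(X):=\mathscr{E}_{\theta,t}^\eps[W,X]/\mathbb{E}_{X^{t,x}}\mathscr{E}_{\theta,t}^\eps[W,X]$ so that $\widehat{\mathbb{E}}_{X^{t,x}}^{\theta,W,\eps}\mathscr{W}[X]=\mathbb{E}_{X^{t,x}}[P(X)\mathscr{W}[X]]$, and applying \cref{eq:GIP} in the $W$ variable for fixed path $X$ together with the Malliavin derivative formula \cref{eq:malliavinderiv-noE} with $J=1$, one obtains
\begin{align*}
\mathbf{E}[P(X)\mathscr{W}[X]]&=\theta^{1/2}|\log\eps|^{-1/2}\mathbf{E}\int_0^t\!\int\rho^\eps(X(s)-y)\bigl[\rho^\eps(X(s)-y)P(X)\\
&\qquad-P(X)\widehat{\mathbb{E}}_{\widetilde{X}^{t,x}}^{\theta,W,\eps}\rho^\eps(\widetilde{X}(s)-y)\bigr]\,\dif y\,\dif s.
\end{align*}
Taking $\mathbb{E}_{X^{t,x}}$, performing the $y$-integration by $\int\rho^\eps(u-y)\rho^\eps(v-y)\,\dif y=R^\eps(u-v)$, and rewriting everything in terms of the tilted measures $\widehat{\mathbb{P}}$ gives
\[
\mathbf{E}\widehat{\mathbb{E}}_{X^{t,x}}^{\theta,W,\eps}\mathscr{W}[X]=\theta^{1/2}|\log\eps|^{-1/2}\Bigl(\mathbf{E}\widehat{\mathbb{E}}_{X^{t,x}}^{\theta,W,\eps}\mathscr{I}_t^\eps[X,X]-\mathbf{E}\widehat{\mathbb{E}}_{X^{t,x},\widetilde{X}^{t,x}}^{\theta,W,\eps}\mathscr{I}_t^\eps[X,\widetilde{X}]\Bigr).
\]

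To finish, I invoke the deterministic identity $\mathscr{I}_t^\eps[X,X]=t\eps^{-2}\|\rho\|_{L^2}^2$ from \cref{eq:itsame}, which is independent of the path and hence survives both expectations unchanged, collapsing the first term to $\theta^{1/2}|\log\eps|^{-1/2}t\eps^{-2}\|\rho\|_{L^2}^2$. Substituting into the expression for $\partial_\theta\kappa_\theta^\eps(t)$ and integrating $\zeta$ from $0$ to $\theta$ then yields \cref{eq:renormdef}. I do not anticipate any serious obstacle: for fixed $\eps$, the exchanges of differentiation, stochastic integration, and path/noise expectations are all routine Fubini-type arguments, because the integrands are smooth in the relevant variables and the Gaussian quantities have uniform exponential moments.
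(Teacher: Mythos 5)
Your proposal is correct and follows essentially the same route as the paper's proof: differentiate $\kappa_\theta^\eps(t)$ in $\theta$ from the Feynman--Kac representation, apply Gaussian integration by parts in $W$ for fixed path $X$ together with the Malliavin derivative formula \cref{eq:malliavinderiv-noE} with $J=1$, recognize the resulting $y$-integral as $\mathscr{I}_t^\eps$ and use \cref{eq:itsame}, then integrate from $0$ to $\theta$ using $\kappa_0^\eps(t)=0$. The only cosmetic difference is that you make the interchange of $\mathbb{E}_{X^{t,x}}$ and $\mathbf{E}$ explicit before invoking integration by parts, whereas the paper folds this into a single display.
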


\begin{proof}
We can compute
\[
\frac{\partial}{\partial\theta}\widetilde{h}_{\theta}^{\eps}(t,x)=\frac{1}{2\sqrt{\theta}}\frac{\mathbb{E}_{X^{t,x}}\left[\left(\int_{0}^{t}\int\rho^{\eps}(X(s)-y)W(\dif y\,\dif s)\right)\mathscr{E}_{\theta,t}^{\eps}[W,X]\right]}{\mathbb{E}_{X^{t,x}}\mathscr{E}_{\theta,t}^{\eps}[W,X]}.
\]
So, using the Malliavin integration by parts formula \cref{eq:GIP}, we have
\begin{align*}
&\frac{\partial}{\partial\theta}\mathbf{E}\widetilde{h}_{\theta}^{\eps}(t,x) \\
&=\frac{1}{2\sqrt{\theta}}\mathbf{E}\int_{0}^{t}\int \mathbb{E}_{X^{t,x}}\biggl[\rho^{\eps}(X(s)-y)\Dif_{s,y}\biggl(\frac{\mathscr{E}_{\theta,t}^{\eps}[W,X]}{\mathbb{E}_{X^{t,x}}\mathscr{E}_{\theta,t}^{\eps}[W,X]}\biggr)\biggr]\,\dif y\,\dif s.
\end{align*}
Thus, by \cref{eq:malliavinderiv-noE}, we have
\begin{align*}
&\frac{\partial}{\partial\theta}\mathbf{E}\widetilde{h}_{\theta}^{\eps}(t,x) \\
&=\frac{1}{2}|\log \eps|^{-\frac{1}{2}} \mathbf{E}\int_0^t\int\widehat{\mathbb{E}}_{X^{t,x},\widetilde{X}^{t,x}}^{\theta,W,\eps}(\rho^\eps(X(s)-y)^2 \\
&\qquad \qquad - \rho_\eps(X(s)-y)\rho_\eps(\widetilde{X}(s)-y))\, \dif y\, \dif s.
\end{align*}
Since $\rho$ is an even function, we have that for any two paths $X$ and $\widetilde{X}$, 
\begin{align}
&\int_0^t \int \rho^\eps(X(s)-y) \rho^\eps(\widetilde{X}(s)-y) \, \dif y\, \dif s\notag\\
&= \int_0^t \int \rho^\eps(X(s)-\widetilde{X}(s)+z) \rho^\eps(z) \, \dif z\, \dif s\notag\\
&= \int_0^t R^\eps(X(s)-\widetilde{X}(s)) \, \dif s = \mathscr{I}_t^\eps[X, \widetilde{X}]. \label{eq:itform}
\end{align}
Therefore, by \cref{eq:itsame}, we get
\begin{align*}
\frac{\partial}{\partial\theta}\mathbf{E}\widetilde{h}_{\theta}^{\eps}(t,x) 
 & =\frac{1}{2}|\log\eps|^{-\frac{1}{2}}\left[\frac{t}{\eps^{2}}\|\rho\|_{L^{2}}^{2}-\mathbf{E}\widehat{\mathbb{E}}_{X^{t,x},\widetilde{X}^{t,x}}^{\theta,W,\eps}\mathscr{I}_{t}^{\eps}[X,\widetilde{X}]\right].
\end{align*}
Integrating in $\theta$, and observing that $\widetilde{h}^\eps_0\equiv0$ by \cref{eq:fkform}, we get \cref{eq:renormdef}.
\end{proof}

\section{Taylor expansion bound\label{sec:Taylorbound}}

Our main technique in this paper is to expand random variables of the form
\[
\mathbf{E}\widehat{\mathbb{E}}_{\mathbf{X}^{t,\mathbf{x}}}^{\theta,W,\eps}\mathscr{Q}[\mathbf{X}],
\]
 in terms
of an infinite Taylor series in $\theta$. (Again, we invite the reader to look ahead to \cref{lem:varcomp} to see how such functionals arise in our variance bound.) An obstacle to carrying out the Taylor expansion is that the right-hand side of \cref{eq:fullexpexpansioneq}
has on the order of $J^{2}$ terms, where as above $J$ is the number
of Brownian motions participating in $\mathscr{Q}$. When the differentiation
process is iterated $r$ times, we see that $J$ grows linearly in
$r$, so the number of terms in the $r$th derivative will be on the
order of $(r!)^{2}$. On its face, this is too many terms for
the Taylor series to be controlled. Of course, the terms will have
different signs and there will be cancellations. However, we do not
know a way to control the cancellation directly.

Instead, a key step in our approach is the following \cref{prop:frderivbound-1},
which uses Young's inequality to ``collapse'' similar terms to \emph{upper-bound}
the $\theta$-derivative of $G_{r,\mathscr{Q}}$, the expectation
of a functional, in terms of $G_{r+1,\mathscr{Q}}$, the expectation
of another functional of a similar form. The key point is that the
successive functionals arising in this way use a \emph{constant} number
of Brownian motions, rather than the linearly-growing number that
arises from na\"ive iterated differentiation as described in the previous
paragraph. This means that the number of terms in the functionals
grows only exponentially rather than like the square of the factorial,
and this can be controlled by taking $\theta$ sufficiently small
in the Taylor series.

The price we pay, of course, is that the functions $G_{r,\mathscr{Q}}$
are not truly successive derivatives of a function; rather, they are
successive upper bounds on each other's derivatives. As we show in
\cref{lem:grtaylor} and \cref{cor:ginfseriesbound} below, this still
allows us to use a Taylor series-like construction to get upper bounds
on our original quantity of interest. However, the fact that our technique
as it stands does not allow us to obtain lower bounds seems to be
a key bottleneck impeding further progress. Obtaining more precise
control on these Taylor series may be a fruitful target of future
work.
\begin{prop}
\label{prop:frderivbound-1}Let $\mathbf{X}_{0}=(X_{0;1},\ldots,X_{0;J})$,
$\mathbf{X}_{1}=(X_{1;1},\ldots,X_{1;J})$, $\mathbf{X}_{2}=(X_{2;1},\ldots,X_{2;J})$ and $\mathbf{x} = (x_1,\ldots,x_J)$.
Define 
\begin{align*}
\underline{\mathbf{X}} & =(\mathbf{X}_{0},\mathbf{X}_{1},\mathbf{X}_{2}),\\
\underline{\mathbf{x}} & =(\mathbf{x},\mathbf{x},\mathbf{x}).
\end{align*}
Suppose that 
\[
\mathscr{Q}:\mathcal{C}([0,t])^{J}\to\mathbf{R}
\]
 is a measurable functional. Define for $r\ge0$
\begin{equation}
F_{r,\mathscr{Q}}(\theta)=\sum_{\alpha=0}^{2}\sum_{\beta=0}^{2}\sum_{j=1}^{J}\sum_{k=1}^{J}\mathbf{1}_{(\alpha,j)\ne(\beta,k)}F_{\alpha,j;\beta,k;r, \mathscr{Q}}(\theta),\label{eq:fdev-1}
\end{equation}
where
\begin{equation}
F_{\alpha,j;\beta,k;r,\mathscr{Q}}(\theta)=\mathbf{E}\widehat{\mathbb{E}}_{\underline{\mathbf{X}}^{t,\underline{\mathbf{x}}}}^{\theta,W,\eps}(\mathscr{Q}[\mathbf{X}_{0}]\mathscr{I}_{t}^{\eps}[X_{\alpha;j},X_{\beta;k}]^{r}).\label{eq:fajbkrdef-1}
\end{equation}
Finally, let 
\begin{equation}
G_{r,\mathscr{Q}}(\theta)=|\log\eps|^{-r}F_{r,\mathscr{Q}}(\theta).\label{eq:gdef}
\end{equation}
Then there is a constant $C(J)$, depending only on $J$, so that
\begin{equation}
|G_{r,\mathscr{Q}}'(\theta)|\le C(J)G_{r+1, |\mathscr{Q}|}(\theta).\label{eq:gderivbound}
\end{equation}
 (Here, $|\mathscr{Q}|$ is defined by $|\mathscr{Q}|[\mathbf{X}_0]=|\mathscr{Q}[\mathbf{X}_0]|$.)
\end{prop}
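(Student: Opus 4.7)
The plan is to differentiate $F_{\alpha,j;\beta,k;r,\mathscr{Q}}(\theta)$ using \cref{lem:fullexpexpansion} applied to the $3J$-tuple $\underline{\mathbf{X}}$ and the functional $\mathscr{Q}[\mathbf{X}_{0}]\mathscr{I}_{t}^{\eps}[X_{\alpha;j},X_{\beta;k}]^{r}$. This yields
\[
\frac{\partial}{\partial\theta}F_{\alpha,j;\beta,k;r,\mathscr{Q}}(\theta)=\tfrac{1}{2}|\log\eps|^{-1}\mathbf{E}\widehat{\mathbb{E}}_{\underline{\mathbf{X}}^{t,\underline{\mathbf{x}}},\widetilde{\underline{\mathbf{X}}}^{t,\underline{\mathbf{x}}},\widetilde{\widetilde{\underline{\mathbf{X}}}}^{t,\underline{\mathbf{x}}}}^{\theta,W,\eps}\Bigl[\mathscr{Q}[\mathbf{X}_{0}]\,\mathscr{I}_{t}^{\eps}[X_{\alpha;j},X_{\beta;k}]^{r}\,\mathscr{T}\Bigr],
\]
where $\mathscr{T}$ is a sum of $O(J^{2})$ signed intersection times of the form $\pm\mathscr{I}_{t}^{\eps}[Y_{1},Y_{2}]$ between distinct pairs of paths drawn from the $9J$ paths now in play. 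Distinctness is immediate from inspection of the three summand types in \cref{lem:fullexpexpansion}: the first carries an explicit indicator $\mathbf{1}_{k\ne\ell}$, and the cross- and double-tilde summands always intersect a path with a fresh independent copy.

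Next I would take absolute values, use nonnegativity of intersection times, and apply Young's inequality with exponents $(r+1)/r$ and $r+1$ to each resulting product:
\[
\mathscr{I}_{t}^{\eps}[X_{\alpha;j},X_{\beta;k}]^{r}\,\mathscr{I}_{t}^{\eps}[Y_{1},Y_{2}]\le \tfrac{r}{r+1}\mathscr{I}_{t}^{\eps}[X_{\alpha;j},X_{\beta;k}]^{r+1}+\tfrac{1}{r+1}\mathscr{I}_{t}^{\eps}[Y_{1},Y_{2}]^{r+1}.
\]
This collapses each product of two distinct intersection times into a sum of single $(r+1)$-st powers, producing summands that resemble the integrand of $F_{\cdot,\cdot;\cdot,\cdot;r+1,|\mathscr{Q}|}$ but still involve up to $9J$ paths under $\widehat{\mathbb{E}}$.

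To recover a bound in terms of $G_{r+1,|\mathscr{Q}|}$, which involves only $3J$ paths, I would exploit the product structure of the tilted measure: conditional on $W$, $\widehat{\mathbb{P}}^{\theta,W,\eps}_{\ldots}$ factorizes over individual paths, each factor being a probability measure. Thus any path not appearing in the integrand can be integrated out at no cost. Then, using that every tilde and double-tilde copy of the $j$-th path starts at $x_{j}$, I would relabel the two remaining intersected paths so that they lie in $\underline{\mathbf{X}}=(\mathbf{X}_{0},\mathbf{X}_{1},\mathbf{X}_{2})$, while keeping $\mathbf{X}_{0}$ fixed to preserve $\mathscr{Q}[\mathbf{X}_{0}]$. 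Finally, missing paths in $\underline{\mathbf{X}}$ can be reinserted for free (each factor integrates to $1$), casting each contribution into the form $F_{\alpha',j';\beta',k';r+1,|\mathscr{Q}|}(\theta)$ with $(\alpha',j')\ne(\beta',k')$. Summing over the $O(J^{2})$ terms in $\mathscr{T}$ and the $O(J^{2})$ choices of $(\alpha,j;\beta,k)$ in $F_{r,\mathscr{Q}}$ gives $|F'_{r,\mathscr{Q}}(\theta)|\le C(J)|\log\eps|^{-1}F_{r+1,|\mathscr{Q}|}(\theta)$, and multiplying by $|\log\eps|^{-r}$ yields the claim.

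The main obstacle is the combinatorial bookkeeping in this last step: one must verify that no relabeling ever produces a diagonal self-intersection $\mathscr{I}^{\eps}[Y,Y]$, which by \cref{eq:itsame} equals the deterministic divergent constant $t\|\rho\|_{L^{2}}^{2}/\eps^{2}$ and could not be absorbed into $G_{r+1,|\mathscr{Q}|}$. The distinctness noted after the first display guarantees that every intersection in $\mathscr{T}$ is between two different paths, so after relabeling to $\underline{\mathbf{X}}$-indices the resulting pair satisfies $(\alpha',j')\ne(\beta',k')$, matching the indicator in the definition of $F_{r+1}$. The constant $C(J)$ is then polynomial in $J$, coming from the $O(J^{2})$ summands in $\mathscr{T}$ together with the $O(J^{2})$ ways the original index $(\alpha,j;\beta,k)$ can be chosen.
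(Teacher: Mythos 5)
Your proposal tracks the paper's proof essentially step for step: differentiate via \cref{lem:fullexpexpansion}, apply Young's inequality with exponents $(r+1)/r$ and $r+1$, then use the product structure of the tilted measure to integrate out and relabel paths, collapsing back to the $3J$-path functionals that define $F_{r+1,|\mathscr{Q}|}$. One point deserves tightening: your final claim that distinctness of paths in the enlarged ($9J$-path) index set ``guarantees'' distinct $\underline{\mathbf{X}}$-indices after relabeling is a non sequitur as stated. For example, $\mathscr{I}_t^\eps[\widetilde{X}_{\alpha';j'},\widetilde{\widetilde{X}}_{\alpha';j'}]$ involves two independent (hence distinct) paths sharing the same $(\alpha',j')$ label, so a careless relabeling would produce the divergent diagonal self-intersection you correctly flag as the danger. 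The paper avoids this by exhibiting an explicit map $f:\{0,1,2\}\to\{1,2\}$ with $f(\alpha)\ne\alpha$: tilde copies in cross terms are sent to $X_{f(\alpha');k'}$ (never colliding with $(\alpha',j')$), and double-tilde pairs are sent to $(X_{1;j'},X_{2;k'})$, so the output index pair is always off-diagonal. That choice of relabeling rule is the piece of bookkeeping your write-up gestures at but does not actually construct; with it in hand, the rest of your argument goes through as you describe.
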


\begin{proof}
Let us define 
\begin{align*}
\underline{\mathbf{X}}_{*} & =(\underline{\mathbf{X}}^{t,\underline{\mathbf{x}}},\widetilde{\underline{\mathbf{X}}}^{t,\underline{\mathbf{x}}},\widetilde{\underline{\widetilde{\mathbf{X}}}}^{t,\underline{\mathbf{x}}}),\\
\underline{\mathbf{x}}_{*} & =(\underline{\mathbf{x}},\underline{\mathbf{x}},\underline{\mathbf{x}}).
\end{align*}
Let $(\alpha,j)\ne(\beta,k)$. By \cref{lem:fullexpexpansion}, we have
\begin{align}
2|\log\eps|F_{\alpha,j;\beta,k;r,\mathscr{Q}}'(\theta)=\sum_{\alpha'=0}^{2}\sum_{\beta'=0}^{2}\sum_{j'=1}^{J}\sum_{k'=1}^{J}\mathbf{E}\widehat{\mathbb{E}}_{\underline{\mathbf{X}}_{*}^{t,\underline{\mathbf{x}}_{*}}}^{\theta,W,\eps}\mathscr{J}_{\alpha',\beta',j',k'}^{\alpha,\beta,j,k}[\underline{\mathbf{X}}_{*}],\label{eq:fajbkrprime}
\end{align}
where
\begin{align*}
&\mathscr{J}_{\alpha',\beta',j',k'}^{\alpha,\beta,j,k}[\underline{\mathbf{X}}_{*}] \notag\\
& =\mathscr{Q}[\mathbf{X}_{0}]\mathscr{I}_{t}^{\eps}[X_{\alpha;j},X_{\beta;k}]^{r}\biggl(\mathscr{I}_{t}^{\eps}[X_{\alpha';j'},X_{\beta';k'}]\mathbf{1}_{(\alpha',j')\ne(\beta',k')}\notag\\
 & \qquad-2\mathscr{I}_{t}^{\eps}[X_{\alpha';j'},\widetilde{X}_{\beta';k'}]+(1+\mathbf{1}_{(\alpha',j')=(\beta',k')})\mathscr{I}_{t}^{\eps}[\widetilde{X}_{\alpha';j'},\widetilde{\widetilde{X}}_{\beta';k'}]\biggr).
\end{align*}
By the triangle inequality and Young's inequality, we have
\begin{align}
|\mathscr{J}_{\alpha',\beta',j',k'}^{\alpha,\beta,j,k}[\underline{\mathbf{X}}_{*}]| & \le\frac{5r}{r+1}|\mathscr{Q}[\mathbf{X}_{0}]|\mathscr{I}_{t}^{\eps}[X_{\alpha;j},X_{\beta;k}]^{r+1}\notag\\
&\quad +\frac{1}{r+1}|\mathscr{Q}[\mathbf{X}_{0}]|\mathscr{I}_{t}^{\eps}[X_{\alpha';j'},X_{\beta';k'}]^{r+1}\mathbf{1}_{(\alpha',j')\ne(\beta',k')}\nonumber \\
 & \quad+\frac{2}{r+1}|\mathscr{Q}[\mathbf{X}_{0}]|\mathscr{I}_{t}^{\eps}[X_{\alpha';j'},\widetilde{X}_{\beta';k'}]^{r+1}\notag\\
 &\quad +\frac{2}{r+1}|\mathscr{Q}[\mathbf{X}_{0}]|\mathscr{I}_{t}^{\eps}[\widetilde{X}_{\alpha';j'},\widetilde{\widetilde{X}}_{\beta';k'}]^{r+1}.\label{eq:Youngs}
\end{align}
Let $f:\{0,1,2\}\to \{0,1,2\}$ be an arbitrary function that has the property that $f(\alpha)\in \{1,2\}\setminus \{\alpha\}$ for any $\alpha$. By symmetry, whenever $(\alpha', j')\ne (\beta', k')$, 
\begin{align*}
&\mathbf{E}  \widehat{\mathbb{E}}_{\underline{\mathbf{X}}_{*}^{t,\underline{\mathbf{x}}_{*}}}^{\theta,W,\eps}( |\mathscr{Q}[\mathbf{X}_{0}]|\mathscr{I}_{t}^{\eps}[X_{\alpha';j'},\widetilde{X}_{\beta';k'}]^{r+1})\\
&= 
\mathbf{E} \widehat{\mathbb{E}}_{\underline{\mathbf{X}}_{*}^{t,\underline{\mathbf{x}}_{*}}}^{\theta,W,\eps}(|\mathscr{Q}[\mathbf{X}_{0}]|\mathscr{I}_{t}^{\eps}[X_{\alpha';j'},X_{f(\alpha');k'}]^{r+1}),
\end{align*}
and similarly,
\begin{align*}
&\mathbf{E} \widehat{\mathbb{E}}_{\underline{\mathbf{X}}_{*}^{t,\underline{\mathbf{x}}_{*}}}^{\theta,W,\eps}(|\mathscr{Q}[\mathbf{X}_{0}]|\mathscr{I}_{t}^{\eps}[\widetilde{X}_{\alpha';j'},\widetilde{\widetilde{X}}_{\beta';k'}]^{r+1}) \\
&= \mathbf{E} \widehat{\mathbb{E}}_{\underline{\mathbf{X}}_{*}^{t,\underline{\mathbf{x}}_{*}}}^{\theta,W,\eps}(|\mathscr{Q}[\mathbf{X}_{0}]|\mathscr{I}_{t}^{\eps}[X_{1;j'},X_{2;k'}]^{r+1}).
\end{align*}
Combining these observations with \cref{eq:Youngs}, we have
\begin{align*}
\mathbf{E} & \widehat{\mathbb{E}}_{\underline{\mathbf{X}}_{*}^{t,\underline{\mathbf{x}}_{*}}}^{\theta,W,\eps}\mathscr{J}_{\alpha',\beta',j',k'}^{\alpha,\beta,j,k}[\underline{\mathbf{X}}_{*}]\\
 & \le\frac{5r}{r+1}F_{\alpha,j;\beta,k;r+1,|\mathscr{Q}|}(\theta)+\frac{\mathbf{1}_{(\alpha',j')\ne(\beta',k')}}{r+1}F_{\alpha',j';\beta',k';r+1,|\mathscr{Q}|}(\theta)\\
 & \qquad+\frac{2}{r+1}F_{\alpha',j';f(\alpha'),k';r+1,|\mathscr{Q}|}(\theta)+\frac{2}{r+1}F_{1,j';2,k';r+1,|\mathscr{Q}|}(\theta)\\
 & \le5F_{\alpha,j;\beta,k;r+1,|\mathscr{Q}|}(\theta)+\mathbf{1}_{(\alpha',j')\ne(\beta',k')}F_{\alpha',j';\beta',k';r+1,|\mathscr{Q}|}(\theta)\\
 &\qquad +2F_{\alpha',j';f(\alpha'),k';r+1,|\mathscr{Q}|}(\theta)+2F_{1,j';2,k';r+1,|\mathscr{Q}|}(\theta).
\end{align*}
In light of \cref{eq:gdef} and \cref{eq:fajbkrprime}, this implies \cref{eq:gderivbound}
with $C(J)=CJ^{2}$ for some absolute constant $C$.
\end{proof}
\begin{lem}
\label{lem:grtaylor}For any $K\ge0$, we have
\[
G_{r,\mathscr{Q}}(\theta)\le\sum_{j=0}^{K}\frac{(C(J)\theta)^{j}}{j!}G_{r+j, |\mathscr{Q}|}(0)+\frac{(C(J)\theta)^{K+1}}{(K+1)!}\sup_{0\le\theta'\le\theta}G_{r+K+1, |\mathscr{Q}|}(\theta').
\]
\end{lem}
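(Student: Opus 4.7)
The plan is to prove this by induction on $K$, using the differential inequality $|G_{r,\mathscr{Q}}'(\theta)| \le C(J)\, G_{r+1,|\mathscr{Q}|}(\theta)$ from \cref{prop:frderivbound-1} as the engine. The key observation that makes the inductive machinery go through is that $G_{r,|\mathscr{Q}|}$ is nonnegative (since $\mathscr{I}^\eps_t[\cdot,\cdot] \ge 0$ and $|\mathscr{Q}| \ge 0$) and that $G_{r,\mathscr{Q}}(\theta) \le G_{r,|\mathscr{Q}|}(\theta)$ pointwise, which will let us replace any occurrence of signed $\mathscr{Q}$ by $|\mathscr{Q}|$ at zero-order and, via the derivative bound, at higher orders as well.

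For the base case $K=0$, I would write, by the fundamental theorem of calculus,
\[
G_{r,\mathscr{Q}}(\theta) = G_{r,\mathscr{Q}}(0) + \int_0^\theta G_{r,\mathscr{Q}}'(\theta_1)\,\dif\theta_1,
\]
then bound $G_{r,\mathscr{Q}}(0) \le G_{r,|\mathscr{Q}|}(0)$ and $G_{r,\mathscr{Q}}'(\theta_1) \le |G_{r,\mathscr{Q}}'(\theta_1)| \le C(J)\, G_{r+1,|\mathscr{Q}|}(\theta_1)$ using \cref{prop:frderivbound-1}, and finally pull the sup out of the resulting integral to obtain
\[
G_{r,\mathscr{Q}}(\theta) \le G_{r,|\mathscr{Q}|}(0) + C(J)\,\theta \sup_{0 \le \theta' \le \theta} G_{r+1,|\mathscr{Q}|}(\theta').
\]

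For the inductive step from $K$ to $K+1$, I would apply the base-case estimate once to obtain
\[
G_{r,\mathscr{Q}}(\theta) \le G_{r,|\mathscr{Q}|}(0) + C(J)\int_0^\theta G_{r+1,|\mathscr{Q}|}(\theta_1)\,\dif\theta_1,
\]
then apply the inductive hypothesis at level $K$ to the integrand $G_{r+1,|\mathscr{Q}|}(\theta_1)$ (noting that $||\mathscr{Q}|| = |\mathscr{Q}|$, so the hypothesis is self-consistent). Integrating the resulting bound term by term with respect to $\theta_1$ yields, for each $j \in \{0,\dots,K\}$, a contribution of $C(J)^{j+1}\theta^{j+2}/(j+1)!$ times $G_{r+1+j,|\mathscr{Q}|}(0)$, which after reindexing $j \mapsto j-1$ fills in the $j = 1,\dots,K+1$ slots of the target sum. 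The remainder term is handled by monotonicity of the sup in its range: $\sup_{0\le\theta'\le\theta_1}G_{r+K+2,|\mathscr{Q}|}(\theta') \le \sup_{0\le\theta'\le\theta}G_{r+K+2,|\mathscr{Q}|}(\theta')$, allowing the sup to be pulled out of $\int_0^\theta$ and the remaining polynomial integrated to give $\theta^{K+2}/(K+2)!$ with the correct constant.

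There is really no major obstacle here: the bookkeeping is routine Taylor-with-remainder, and all the analytic content has already been invested in \cref{prop:frderivbound-1}. The only point that requires a moment of care is making sure to switch from $\mathscr{Q}$ to $|\mathscr{Q}|$ immediately after the first differentiation, so that the higher-order terms all refer to the nonnegative functional $|\mathscr{Q}|$ and the induction hypothesis can be invoked on itself; this is why the statement is asymmetric in $\mathscr{Q}$ versus $|\mathscr{Q}|$ on its two sides.
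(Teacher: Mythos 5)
Your proof is correct, and it is precisely the elementary Taylor-with-remainder induction that the paper alludes to (and omits), with the base case furnished by the fundamental theorem of calculus plus the derivative bound of \cref{prop:frderivbound-1}, and the inductive step obtained by substituting the level-$K$ bound for $G_{r+1,|\mathscr{Q}|}$ into the base-case integral and using monotonicity of the supremum in its range. The observation that $G_{r,\mathscr{Q}}(0)\le G_{r,|\mathscr{Q}|}(0)$ and that one switches to $|\mathscr{Q}|$ after the first application of \cref{eq:gderivbound} is exactly the point the paper's statement is set up to exploit, so there is nothing to add.
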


This statement is proved from \cref{eq:gderivbound} in the same way
as Taylor's theorem from single-variable calculus, using inequalities
instead of equalities, so we omit the details.%

\begin{cor}
\label{cor:ginfseriesbound}
For any bounded measurable functional $\mathscr{Q}$ and any $r\ge 0$,
\[
G_{r, \mathscr{Q}}(\theta)\le\sum_{j=0}^{\infty}\frac{(C(J)\theta)^{j}}{j!}G_{r+j, |\mathscr{Q}|}(0).
\]
\end{cor}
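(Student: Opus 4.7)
The plan is to take the limit $K\to\infty$ in the finite expansion provided by \cref{lem:grtaylor}. The corollary then reduces to showing that the remainder
\[
R_K(\theta) := \frac{(C(J)\theta)^{K+1}}{(K+1)!}\sup_{0\le\theta'\le\theta}G_{r+K+1,|\mathscr{Q}|}(\theta')
\]
tends to zero as $K\to\infty$ for each fixed $\eps$, $\theta$, $t$ and each bounded measurable $\mathscr{Q}$. Note that, in contrast to \cref{thm:maintheorem}, no smallness assumption on $\theta$ is imposed; the right-hand side of the claimed inequality is merely an upper bound and is allowed to be $+\infty$, so all the work lies in making the remainder vanish, not in establishing convergence of the infinite series.

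The key observation is that $\mathscr{I}_t^\eps[X,Y]$ admits a purely deterministic upper bound: since $\rho$ is even and normalized so that $\rho^\eps$ is a periodic mollifier, Cauchy--Schwarz gives
\[
\sup_{y\in\mathbf{T}^2}|R^\eps(y)| = \sup_{y}\biggl|\int \rho^\eps(y-z)\rho^\eps(z)\,\dif z\biggr| \le \|\rho^\eps\|_{L^2}^{2} = \eps^{-2}\|\rho\|_{L^2}^{2},
\]
so that $\mathscr{I}_t^\eps[X_{\alpha;j},X_{\beta;k}]\le t\eps^{-2}\|\rho\|_{L^2}^{2}$ for every pair of paths. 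Combining this with $|\mathscr{Q}|\le\|\mathscr{Q}\|_\infty$ and the trivial fact that $\mathbf{E}\widehat{\mathbb{E}}^{\theta',W,\eps}$ of a deterministic constant is that constant, the definitions \cref{eq:fdev-1}--\cref{eq:gdef} give
\[
G_{r+K+1,|\mathscr{Q}|}(\theta') \le C_{1}(J)\|\mathscr{Q}\|_\infty \left(\frac{t\|\rho\|_{L^2}^{2}}{\eps^{2}|\log\eps|}\right)^{r+K+1},
\]
where $C_{1}(J)$ accounts for the finite number of summands in \cref{eq:fdev-1} and is independent of $K$.

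Plugging this estimate into $R_K(\theta)$ shows that $R_K(\theta) \le C' A^{K+1}/(K+1)!$, where $C'$ depends on $\eps,\theta,t,J,r,\mathscr{Q}$ and $A := C(J)\theta t\|\rho\|_{L^2}^{2}/(\eps^{2}|\log\eps|)$ is $K$-independent; hence $R_K(\theta)\to 0$ as $K\to\infty$ by the factorial in the denominator. Passing to the limit in \cref{lem:grtaylor} then yields the stated inequality. I anticipate no genuine obstacle here: once the $L^\infty$ bound on $R^\eps$ is recorded, the rest is bookkeeping. The deterministic bound is wasteful (it swamps the $|\log\eps|^{-r}$ weighting of $G_{r,|\mathscr{Q}|}$ by a power of $\eps^{-2}$), but this does not matter, because the corollary needs only qualitative convergence of the remainder at a fixed $\eps$; the genuinely $\eps$-uniform estimates must come from applying the corollary together with sharp bounds on $G_{j,|\mathscr{Q}|}(0)$ later in the paper.
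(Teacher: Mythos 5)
Your proposal is correct and follows essentially the same route as the paper: both pass to the limit $K\to\infty$ in \cref{lem:grtaylor} and kill the remainder via a crude deterministic bound $\mathscr{I}_t^\eps\le t\eps^{-2}\cdot(\text{const depending only on }\rho)$, with the factorial $(K+1)!$ swamping the resulting exponential. The only cosmetic difference is that you bound $\|R^\eps\|_\infty$ by Cauchy--Schwarz ($\|\rho^\eps\|_{L^2}^2=\eps^{-2}\|\rho\|_{L^2}^2$) while the paper uses Young's/H\"older's bound ($\|\rho^\eps\|_{L^1}\|\rho^\eps\|_{L^\infty}=\eps^{-2}\|\rho\|_{L^\infty}$); both give the same order and the rest of the argument is identical.
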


\begin{proof}
Let $\|\mathscr{Q}\|_\infty$ be an absolute bound on $|\mathscr{Q}|$. Then we  have a trivial bound
\[
G_{r,\mathscr{Q}}(\theta)\le F_{r,\mathscr{Q}}(\theta)\le9J^{2}\|\mathscr{Q}\|_{\infty}(t\|\rho\|_{L^\infty}/\eps^2)^{r}
\]
by the definition \cref{eq:fdev-1}--\cref{eq:fajbkrdef-1} of $F_{r,\mathscr{Q}}$, and the observation that
\begin{align*}
\|\mathscr{I}_t^\eps\|_{\infty} &\le t\|R^\eps\|_{\mathcal{L}^\infty}\le t \|\rho^\eps\|_{L^1}\|\rho^\eps\|_{L^\infty} = \frac{t\|\rho\|_{L^\infty} }{\eps^2}.
\end{align*}
Combining this with \cref{lem:grtaylor}, we have that
\begin{align*}
&G_{r,\mathscr{Q}}(\theta)  \le\sum_{j=0}^{K}\frac{(C(J)\theta)^{j}}{j!}G_{r+j, |\mathscr{Q}|}(0)+\frac{(C(J)\theta)^{K+1}}{(K+1)!}\sup_{0\le\theta'\le\theta}G_{r+K+1, |\mathscr{Q}|}(\theta')\\
 & \le\sum_{j=0}^{K}\frac{(C(J)\theta)^{j}}{j!}G_{r+j, |\mathscr{Q}|}(0)\\
 &\qquad \qquad +9J^{2}\|\mathscr{Q}\|_{\infty}(t\|\rho\|_{L^\infty}/\eps^2)^{r+K+1}\frac{(C(J)\theta)^{K+1}}{(K+1)!}.
\end{align*}
The result follows when we notice that the remainder term goes to
$0$ as $K$ goes to infinity.
\end{proof}
We conclude this section with two examples of how we can apply the above
bounds in concert with the results of \cref{sec:BMintersectionests}. First, we compute the first-order asymptotics of the renormalization constant.

\begin{lem}
\label{lem:kappaasympt}For fixed $\theta$ sufficiently small and
fixed $t$, we have 
\[
\kappa_{\theta}^{\eps}(t)=\frac{1}{2}|\log\eps|^{-\frac{1}{2}}\frac{\theta t}{\eps^{2}}\|\rho\|_{L^{2}}^{2}+O(|\log\eps|^{\frac{1}{2}})
\]
as $\eps\downarrow0$.
\end{lem}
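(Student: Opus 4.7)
The plan is to begin from the exact identity provided by \cref{lem:expcomp},
\[
\kappa_{\theta}^{\eps}(t)=\frac{1}{2}|\log\eps|^{-\frac{1}{2}}\left(\frac{\theta t}{\eps^{2}}\|\rho\|_{L^{2}}^{2}-\int_{0}^{\theta}I^\eps(\zeta)\,\dif\zeta\right),\qquad I^\eps(\zeta):=\mathbf{E}\widehat{\mathbb{E}}_{X^{t,x},\widetilde{X}^{t,x}}^{\zeta,W,\eps}\mathscr{I}_{t}^{\eps}[X,\widetilde{X}].
\]
The first term is already what we want, so the claim reduces to showing $\int_0^\theta I^\eps(\zeta)\,\dif\zeta=O(|\log\eps|)$ as $\eps\downarrow 0$; multiplication by $\frac{1}{2}|\log\eps|^{-1/2}$ then delivers the advertised $O(|\log\eps|^{1/2})$ error term.

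The next step is to recognize $I^\eps(\zeta)$ as a single summand of the functional $F_{1,\mathbf{1}}(\zeta)$ from \cref{prop:frderivbound-1}, taken with $J=2$, $\mathscr{Q}\equiv 1$, $\mathbf{x}=(x,x)$, and indices $(\alpha,j,\beta,k)=(0,1,0,2)$. Because the Radon--Nikodym density $\dif\widehat{\mathbb{P}}_{\underline{\mathbf{X}}^{t,\underline{\mathbf{x}}}}^{\zeta,W,\eps}/\dif\mathbb{P}_{\underline{\mathbf{X}}^{t,\underline{\mathbf{x}}}}$ factorizes as a product of normalized per-path tilts for fixed $W$, integrating out the four paths not appearing in $\mathscr{I}_t^\eps[X_{0;1},X_{0;2}]$ each contributes a factor of $1$, so the six-path expectation defining $F_{0,1;0,2;1,\mathbf{1}}(\zeta)$ collapses to $I^\eps(\zeta)$. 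All summands in $F_{1,\mathbf{1}}$ are non-negative (intersection times are non-negative and $\mathscr{Q}\equiv 1$), so I would next invoke \cref{cor:ginfseriesbound} to obtain
\[
I^\eps(\zeta)\le F_{1,\mathbf{1}}(\zeta)=|\log\eps|\,G_{1,\mathbf{1}}(\zeta)\le|\log\eps|\sum_{j=0}^\infty\frac{(C(2)\zeta)^j}{j!}\,G_{1+j,\mathbf{1}}(0).
\]

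The last task is to bound the initial values $G_{r,\mathbf{1}}(0)$. At $\zeta=0$ the tilted measure reduces to the product Wiener measure, so each of the $O(J^2)=O(1)$ summands in $F_{r,\mathbf{1}}(0)$ has the form $\mathbb{E}_{X_{1}^{t,x},X_{2}^{t,x}}\mathscr{I}_{t}^{\eps}[X_{1},X_{2}]^{r}$ with $X_{1},X_{2}$ independent Brownian motions from the common starting point $x$. Conditioning on $X_2$ and applying \cref{lem:logupperbound} with $Y=X_2$ (valid once $\eps\le e^{-t/2}$, which holds for $\eps$ small at fixed $t$), each such moment is at most $C^r r!\,|\log\eps|^r$, so $G_{r,\mathbf{1}}(0)\le C'C^r r!$ after multiplication by $|\log\eps|^{-r}$. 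Plugging this into the series yields terms of the form $(1+j)(CC(2)\zeta)^j$, which converges once $CC(2)\theta<1$ --- this is precisely the sense of ``$\theta$ sufficiently small''. The result is a uniform bound $I^\eps(\zeta)\le M_\theta|\log\eps|$ for $\zeta\in[0,\theta]$, hence $\int_0^\theta I^\eps(\zeta)\,\dif\zeta=O(|\log\eps|)$, completing the argument. The main obstacle is the factorial growth of $G_{r,\mathbf{1}}(0)$ in $r$: it is what forces the smallness hypothesis on $\theta$, and also what prevents a straightforward sharpening of the asymptotic beyond the stated leading order.
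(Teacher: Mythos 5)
Your proof is correct and follows the same strategy as the paper's: reduce via the exact formula from \cref{lem:expcomp}, Taylor-expand the tilted intersection-time expectation using \cref{cor:ginfseriesbound}, and control the resulting coefficients via \cref{lem:logupperbound}. The one minor deviation is your choice of $\mathscr{Q}\equiv 1$ with $r=1$, so that $I^\eps(\zeta)$ appears directly among the nonnegative summands of $F_{1,\mathbf{1}}(\zeta)$; the paper instead takes $\mathscr{Q}[\mathbf{X}_0]=\mathscr{I}_t^\eps[X_{0;1},X_{0;2}]$ with $r=0$, which costs it one extra Young's-inequality step to merge the two intersection-time factors $\mathscr{I}_t^\eps[X_{0;1},X_{0;2}]\mathscr{I}_t^\eps[X_{\alpha;j},X_{\beta;k}]^\ell$ into a single power before invoking \cref{lem:logupperbound}. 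Your variant sidesteps that step but arrives at the same $C^r r!$ coefficient growth and the same smallness requirement on $\theta$.
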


\begin{proof}Let $\mathbf{x}=(x,x)$ and fix notation as in \cref{prop:frderivbound-1}. We have by \cref{cor:ginfseriesbound} that, as long as $\theta$ is sufficiently small and $\eps<\e^{-t/2}$, 
\begin{align*}
 \mathbf{E}&\widehat{\mathbb{E}}_{X^{t,x}_{0,1},X^{t,x}_{0,2}}^{\theta,W,\eps}\mathscr{I}_{t}^{\eps}[X_{0,1},X_{0,2}]\\
 &\le\sum_{\ell=0}^{\infty}\frac{C^{\ell}\theta^{\ell}}{\ell!|\log\eps|^{\ell}}\biggl(\sum_{\alpha=0}^{2}\sum_{\beta=0}^{2}\sum_{j=1}^{2}\sum_{k=1}^{2}\notag\\
&\qquad \qquad \mathbf{1}_{(\alpha,j)\ne(\beta,k)}\mathbb{E}_{\mathbf{X}_{*}^{t,\mathbf{x}_{*}}}(\mathscr{I}_{t}^{\eps}[X_{0,1},X_{0,2}]\mathscr{I}_{t}^{\eps}[X_{\alpha,j},X_{\beta,k}]^{\ell})\biggr)\\
&\le  35\sum_{\ell=0}^{\infty}\frac{C^{\ell}\theta^{\ell}}{\ell!|\log\eps|^{\ell}}\mathbb{E}_{{X^{t,x}_{0,1},X^{t,x}_{0,2}}}\mathscr{I}_{t}^{\eps}[X_{0,1},X_{0,2}]^{\ell+1}\\
&\le  C'|\log\eps|,
\end{align*}
where the second inequality is by Young's inequality and the third is by \cref{lem:logupperbound}. Then the statement follows
from \cref{eq:renormdef}.
\end{proof}

The following proposition is used in \cref{sec:nontriviality}.
In \cref{sec:maintheoremproof} we use a slightly more specialized,
but similar in flavor, application of the bounds in \cref{sec:BMintersectionests};
see \cref{prop:fajbkrbound}.
\begin{prop}
\label{prop:simpleUB}
Let $\mathbf{X} = (X_1,\ldots, X_J)$ and $\mathbf{x}=(x_1,\ldots,x_J)$. Let $\mathscr{Q}$ be a bounded measurable functional  and define
\[
Q(\theta)=\mathbf{E}\widehat{\mathbb{E}}_{\mathbf{X}^{t,\mathbf{x}}}^{\theta,W,\eps}\mathscr{Q}[\mathbf{X}].
\]
Then there are constants $C>0$ and $\theta_{0}>0$, depending only on $J$ (and not on $\eps$, $\theta$, $\mathscr{Q}$, $\mathbf{x}$ and $t$), so that if $0\le\theta\le\theta_{0}$ and $\eps\le\e^{-t/2}$,
then
\begin{equation}
|Q'(\theta)|\le C(\mathbb{E}_{\mathbf{X}^{t,\mathbf{x}}}|\mathscr{Q}[\mathbf{X}]|^{2})^{1/2}.\label{eq:dAtildebound}
\end{equation}
\end{prop}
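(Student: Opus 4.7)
\medskip
\noindent
My plan is to differentiate $Q(\theta)$ using \cref{lem:fullexpexpansion}, apply Cauchy--Schwarz in the joint measure $\mathbf{P}\otimes\widehat{\mathbb{P}}$ to separate $\mathscr{Q}$ from the intersection-time factor, and then control each of the two resulting factors using the Taylor-series machinery of \cref{sec:Taylorbound}. Concretely, \cref{lem:fullexpexpansion} applied with $J$ Brownian motions gives
\[
Q'(\theta)=\frac{1}{2|\log\eps|}\,\mathbf{E}\widehat{\mathbb{E}}^{\theta,W,\eps}_{\mathbf{X}^{t,\mathbf{x}},\widetilde{\mathbf{X}}^{t,\mathbf{x}},\widetilde{\widetilde{\mathbf{X}}}^{t,\mathbf{x}}}\!\bigl[\mathscr{Q}[\mathbf{X}]\,\Sigma\bigr],
\]
where $\Sigma$ is the signed sum of intersection times in the bracket on the right-hand side of \cref{eq:fullexpexpansioneq}. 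Cauchy--Schwarz then yields
\[
|Q'(\theta)|^2\le\frac{1}{4|\log\eps|^2}\bigl(\mathbf{E}\widehat{\mathbb{E}}\mathscr{Q}[\mathbf{X}]^2\bigr)\bigl(\mathbf{E}\widehat{\mathbb{E}}\Sigma^2\bigr).
\]
Because $\mathscr{Q}[\mathbf{X}]$ depends only on the first path copy and the tilted measure on each extra copy has mass $1$, the first factor simplifies to $\mathbf{E}\widehat{\mathbb{E}}^{\theta,W,\eps}_{\mathbf{X}^{t,\mathbf{x}}}\mathscr{Q}[\mathbf{X}]^2$.

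For the first factor I would apply \cref{cor:ginfseriesbound} to $\mathscr{Q}^2$ at $r=0$, producing a series $\sum_{j\ge 0}(C(J)\theta)^j/j!\cdot G_{j,\mathscr{Q}^2}(0)$. At $\theta=0$ the tilted three-copy expectation degenerates to the plain Brownian product measure, and each summand is $|\log\eps|^{-j}\mathbb{E}\bigl(\mathscr{Q}[\mathbf{X}_0]^2\,\mathscr{I}_t^\eps[X_{\alpha;j'},X_{\beta;k}]^j\bigr)$. When the two paths inside $\mathscr{I}^j$ lie in the independent auxiliary copies $\mathbf{X}_1,\mathbf{X}_2$, the expectation factorises and \cref{lem:logupperbound} (via conditioning one path) or \cref{lem:X0Yest} gives a bound $C^j j!\,\mathbb{E}\mathscr{Q}^2$; when one or both paths lies in $\mathbf{X}_0$, I condition on $\mathbf{X}_0$ and apply \cref{lem:logupperbound} to the remaining independent motion(s) to get the same bound. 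Summing for $\theta$ below a threshold $\theta_0(J)$ gives $\mathbf{E}\widehat{\mathbb{E}}_\mathbf{X}\mathscr{Q}^2\le C(J)\,\mathbb{E}_\mathbf{X}\mathscr{Q}^2$.

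For the second factor I expand $\Sigma^2$ into an $O(J^4)$ sum of products of two intersection times, use Young's inequality to replace each product by a pure second moment $\mathscr{I}_t^\eps[\cdot,\cdot]^2$, and invoke \cref{cor:ginfseriesbound} with $\mathscr{Q}=1$ and $r=2$; the resulting series is controlled by Brownian moments $\mathbb{E}\mathscr{I}^{2+j}\le C^{2+j}(2+j)!|\log\eps|^{2+j}$ from \cref{lem:logupperbound}, which converges for $\theta\le\theta_0$ and yields $\mathbf{E}\widehat{\mathbb{E}}\Sigma^2\le C(J)|\log\eps|^2$. Combining the two factor bounds with the $|\log\eps|^{-2}$ prefactor gives $|Q'(\theta)|^2\le C(J)\,\mathbb{E}_{\mathbf{X}^{t,\mathbf{x}}}\mathscr{Q}[\mathbf{X}]^2$, which is the desired inequality.

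The step I expect to be the hardest is the ``same-copy'' case inside the first-factor bound, where the intersection time $\mathscr{I}_t^\eps[X_{0;j'},X_{0;k}]$ in $G_{j,\mathscr{Q}^2}(0)$ shares Brownian motions with $\mathscr{Q}^2$. Decoupling this correlation cleanly — without introducing an $\|\mathscr{Q}\|_{L^\infty}$ factor that would spoil the desired $L^2$ right-hand side — is the delicate point, and is what forces the smallness condition $\theta\le\theta_0(J)$. Once that is handled via a conditioning-and-\cref{lem:logupperbound} argument, the rest of the proof is a routine assembly of the Taylor-series bounds already established.
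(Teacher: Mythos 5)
There is a genuine gap in your first-factor bound. Applying Cauchy--Schwarz \emph{before} the Taylor expansion, i.e.\ writing
$|Q'(\theta)|^2\le\frac{1}{4|\log\eps|^2}(\mathbf{E}\widehat{\mathbb{E}}\mathscr{Q}^2)(\mathbf{E}\widehat{\mathbb{E}}\Sigma^2)$,
commits you to showing $\mathbf{E}\widehat{\mathbb{E}}^{\theta,W,\eps}_{\mathbf{X}^{t,\mathbf{x}}}\mathscr{Q}[\mathbf{X}]^2\le C(J)\,\mathbb{E}_{\mathbf{X}^{t,\mathbf{x}}}\mathscr{Q}[\mathbf{X}]^2$. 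Expanding the left side via \cref{cor:ginfseriesbound} at $r=0$ produces terms $|\log\eps|^{-j}\,\mathbb{E}\bigl(\mathscr{Q}[\mathbf{X}_0]^2\,\mathscr{I}_t^\eps[X_{\alpha;j'},X_{\beta;k'}]^j\bigr)$. When both indices lie in the zero copy ($\alpha=\beta=0$, $j'\ne k'$ --- a case that arises as soon as $J\ge 2$, and in particular in the application in \cref{lem:abound}, where $J=2$), the intersection time and $\mathscr{Q}^2$ share all of their Brownian motions, and the conditioning strategy you describe cannot be carried out: conditioning on $\mathbf{X}_0$ leaves no randomness to which \cref{lem:logupperbound} could be applied, and a further Cauchy--Schwarz at that point would produce $(\mathbb{E}\mathscr{Q}^4)^{1/2}(\mathbb{E}\mathscr{I}^{2j})^{1/2}$, incompatible with the desired $L^2$ right-hand side. (Indeed, $\mathbb{E}(\mathscr{Q}^2\mathscr{I}^j)=\mathbb{E}\mathscr{Q}^2\cdot\mathbb{E}_{\mathscr{Q}^2\text{-tilted}}\mathscr{I}^j$, and there is no reason the $\mathscr{Q}^2$-tilted intersection-time moments should obey the same $C^jj!|\log\eps|^j$ bound.)

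The paper's proof avoids this by reversing the order of operations. It first uses \cref{prop:frderivbound-1} to bound $|Q'(\theta)|\le C(J)\,G_{1,|\mathscr{Q}|}(\theta)$, then Taylor-expands with \cref{cor:ginfseriesbound} down to the plain Brownian measure at $\theta=0$, and only \emph{then} applies Cauchy--Schwarz --- and, crucially, applies it to $|\mathscr{Q}|\cdot\mathscr{I}^{\ell+1}$ rather than to $\mathscr{Q}\cdot\Sigma$. This way the Cauchy--Schwarz step produces exactly $(\mathbb{E}\mathscr{Q}^2)^{1/2}$ and a pure intersection moment $(\mathbb{E}\mathscr{I}^{2(\ell+1)})^{1/2}$, which is controllable by \cref{lem:logupperbound} in every case (including the same-copy case) simply by conditioning on one of the two paths appearing in $\mathscr{I}$. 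Your treatment of the second factor $\mathbf{E}\widehat{\mathbb{E}}\Sigma^2\le C|\log\eps|^2$ is sound, but without a valid first-factor bound the argument does not close; you should adopt the paper's ordering rather than applying Cauchy--Schwarz under the tilted measure.
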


\begin{proof}
By \cref{prop:frderivbound-1}, we get
\[
|Q'(\theta)|\le C(J) G_{1, |\mathscr{Q}|}(\theta). 
\]
Therefore by \cref{cor:ginfseriesbound}, with $\mathbf{X}_{*}$
defined as in the statement of \cref{prop:frderivbound-1} (with $\mathbf{X}_0=\mathbf{X}$), we get
\begin{align}
|Q'(\theta)|&\le \sum_{\ell=0}^{\infty}\frac{(C(J)\theta)^{\ell}}{\ell!}G_{\ell+1, |\mathscr{Q}|}(0)\notag \\
&=\sum_{\ell=0}^{\infty}\frac{C(J)^{\ell}\theta^{\ell}}{\ell!|\log\eps|^{\ell+1}}\biggl(\sum_{\alpha=0}^{2}\sum_{\beta=0}^{2}\sum_{j=1}^{2}\sum_{k=1}^{2}\notag\\
&\qquad \qquad \mathbf{1}_{(\alpha,j)\ne(\beta,k)}\mathbb{E}_{\mathbf{X}_{*}^{t,\mathbf{x}_{*}}}(|\mathscr{Q}[\mathbf{X}_{0}]|\mathscr{I}_{t}^{\eps}[X_{\alpha,j},X_{\beta,k}]^{\ell+1})\biggr).\label{eq:Aderivexpansion}
\end{align}
By the Cauchy--Schwarz inequality, we get
\begin{align}
&\mathbb{E}_{\mathbf{X}_{*}^{t,\mathbf{x}_{*}}}(|\mathscr{Q}[\mathbf{X}_{0}]|\mathscr{I}_{t}^{\eps}[X_{\alpha,j},X_{\beta,k}]^{\ell+1}) \notag\\
& \le(\mathbb{E}_{\mathbf{X}^{t,\mathbf{x}}}|\mathscr{Q}[\mathbf{X}]|^{2})^{1/2}(\mathbb{E}_{\mathbf{X}_{*}^{t,\mathbf{x}_{*}}}\mathscr{I}_{t}^{\eps}[X_{\alpha,j},X_{\beta,k}]^{2(\ell+1)})^{1/2}.\label{eq:AderivCauchySchwarz}
\end{align}
Now \cref{lem:logupperbound} tells us that
\begin{align}
(\mathbb{E}_{\mathbf{X}_{*}^{t,\mathbf{x}_{*}}}\mathscr{I}_{t}^{\eps}[X_{\alpha,j},X_{\beta,k}]^{2(\ell+1)})^{1/2}&\le C^{\ell+1}\sqrt{(2(\ell+1))!}|\log\eps|^{\ell+1}\notag\\
&\le(2C)^{\ell+1}(\ell+1)!|\log\eps|^{\ell+1}.\label{eq:Aderivintersections}
\end{align}
 Plugging \cref{eq:AderivCauchySchwarz} and \cref{eq:Aderivintersections}
into \cref{eq:Aderivexpansion} gives us
\[
|Q'(\theta)|\le C_{1}(\mathbb{E}_{\mathbf{X}^{t,\mathbf{x}}}|\mathscr{Q}[\mathbf{X}]|^{2})^{1/2}\sum_{\ell=0}^{\infty}(C_{2}\theta)^{\ell}\le\frac{C_{1}}{1-C_{2}}(\mathbb{E}_{\mathbf{X}^{t,\mathbf{x}}}|\mathscr{Q}[\mathbf{X}]|^{2})^{1/2},
\]
as long as $\theta<C_{2}^{-1}$, for some constants $C_{1}$ and $C_{2}$.
This completes the proof of the lemma.
\end{proof}

\section{Proof of tightness\label{sec:maintheoremproof}}

In this section we prove \cref{thm:maintheorem-quantitative}. The key ingredients
will be the tightness criteria in \cref{thm:tightness-fixedtime,thm:tightness-parabolic} (which we will
apply with $p=2$), the Gaussian Poincar\'e inequality, our Taylor
expansion bound \cref{cor:ginfseriesbound}, and the Brownian motion
intersection estimates in \cref{sec:BMintersectionests}. First we compute
the variance of our KPZ solution integrated against a test function.
\begin{lem}
\label{lem:varcomp}If $\psi$ is a bounded measurable function on $\mathbf{T}^{2}$,
then we have
\begin{align}
&\Var\left(\int h_{\theta}^{\eps}(t,x)\psi(x)\,\dif x\right)\notag\\
&\le\theta\int\int\psi(x_{1})\psi(x_{2})\mathbf{E}\widehat{\mathbb{E}}_{X_{1}^{t,x_{1}},X_{2}^{t,x_{2}}}^{\theta,W,\eps}\mathscr{I}_{t}^{\eps}[X_{1},X_{2}]\,\dif x_{1}\,\dif x_{2}.\label{eq:varbound-1}
\end{align}
\end{lem}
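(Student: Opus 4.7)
The plan is to apply the Gaussian Poincar\'e inequality \cref{eq:GPI} to the random variable $Y=\int h^\eps_\theta(t,x)\psi(x)\,\dif x$, after computing its Malliavin derivative explicitly via the Feynman--Kac representation \cref{eq:MainFeynmanKacFormula}. The bound \cref{eq:varbound-1} then drops out after identifying the resulting space-time integral with an intersection local time using \cref{eq:itform}.

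First, I would compute $\Dif_{s,y}h^\eps_\theta(t,x)$. By \cref{eq:MainFeynmanKacFormula},
\[
h^\eps_\theta(t,x)=|\log\eps|^{\frac{1}{2}}\log\mathbb{E}_{X^{t,x}}\mathscr{E}^\eps_{t,\theta}[W,X],
\]
so by the chain rule for Malliavin derivatives and \cref{eq:detaE},
\[
\Dif_{s,y}h^\eps_\theta(t,x)=|\log\eps|^{\frac{1}{2}}\,\frac{\mathbb{E}_{X^{t,x}}\Dif_{s,y}\mathscr{E}^\eps_{t,\theta}[W,X]}{\mathbb{E}_{X^{t,x}}\mathscr{E}^\eps_{t,\theta}[W,X]}=\theta^{\frac{1}{2}}\,\widehat{\mathbb{E}}^{\theta,W,\eps}_{X^{t,x}}\rho^\eps(X(s)-y),
\]
where I used the definition \cref{eq:RNderiv-1} of the tilted measure. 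Since the Malliavin derivative commutes with the integration against $\psi$,
\[
\Dif_{s,y}Y=\theta^{\frac{1}{2}}\int\psi(x)\,\widehat{\mathbb{E}}^{\theta,W,\eps}_{X^{t,x}}\rho^\eps(X(s)-y)\,\dif x.
\]

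Next, I would square this expression and use the product-measure notation introduced above \cref{lem:Pderivs}: writing the square of a tilted single-path expectation as a two-path tilted expectation gives
\begin{align*}
(\Dif_{s,y}Y)^2 &= \theta \int\!\!\int\psi(x_1)\psi(x_2)\,\widehat{\mathbb{E}}^{\theta,W,\eps}_{X_1^{t,x_1},X_2^{t,x_2}}\!\!\bigl[\rho^\eps(X_1(s)-y)\rho^\eps(X_2(s)-y)\bigr]\,\dif x_1\,\dif x_2.
\end{align*}
Plugging this into the Gaussian Poincar\'e inequality \cref{eq:GPI} and swapping the order of integration (justified by positivity and the smoothness of $\rho^\eps$), we obtain
\begin{align*}
\Var(Y)&\le\theta\int\!\!\int\psi(x_1)\psi(x_2)\,\mathbf{E}\,\widehat{\mathbb{E}}^{\theta,W,\eps}_{X_1^{t,x_1},X_2^{t,x_2}}\!\!\int_0^t\!\!\int\!\rho^\eps(X_1(s)-y)\rho^\eps(X_2(s)-y)\,\dif y\,\dif s\,\dif x_1\,\dif x_2.
\end{align*}
Finally, the space integral $\int\rho^\eps(X_1(s)-y)\rho^\eps(X_2(s)-y)\,\dif y$ equals $R^\eps(X_1(s)-X_2(s))$ by \cref{eq:itform} (applied pathwise), so the time integral is precisely $\mathscr{I}^\eps_t[X_1,X_2]$, which yields \cref{eq:varbound-1}.

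There is no substantial obstacle: the main subtleties are the correct application of the chain rule for the Malliavin derivative through the logarithm (which is harmless because $\mathbb{E}_{X^{t,x}}\mathscr{E}^\eps_{t,\theta}[W,X]$ is almost surely positive) and the rewriting of the square of the tilted expectation as a product-measure expectation so that the $\psi$-integrals can be pulled outside. Both are routine given the framework already set up in \cref{sec:CH} and \cref{sec:derivatives}.
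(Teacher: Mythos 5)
Your proof is correct and follows essentially the same route as the paper: you apply the Gaussian Poincar\'e inequality, compute $\Dif_{s,y}h^\eps_\theta(t,x)=\theta^{1/2}\widehat{\mathbb{E}}^{\theta,W,\eps}_{X^{t,x}}\rho^\eps(X(s)-y)$ via the chain rule and \cref{eq:detaE}, pass to a two-path tilted expectation to handle the square, and collapse the $(\dif y\,\dif s)$-integral into $\mathscr{I}^\eps_t[X_1,X_2]$. Nothing substantive differs from the paper's argument.
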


\begin{proof}
By the Gaussian Poincar\'e inequality \cref{eq:GPI}, we have
\[
\Var\left(\int h_{\theta}^{\eps}(t,x)\psi(x)\,\dif x\right)\le\int_{0}^{t}\int\mathbf{E}\left(\int\left(\Dif_{s,y}h_{\theta}^{\eps}(t,x)\right)\psi(x)\,\dif x\right)^{2}\,\dif y\,\dif s.
\]
Now, we have by \cref{eq:malldef} and \cref{eq:detaE}, for $s\in[0,t]$, that
\begin{align}
\Dif_{s,y}h_{\theta}^{\eps}(t,x)&=|\log\eps|^{\frac{1}{2}}\frac{\mathbb{E}_{X^{t,x}}\Dif_{s,y}\mathscr{E}_{\theta,t}^{\eps}[W,X]}{\mathbb{E}_{X^{t,x}}\mathscr{E}_{\theta,t}^{\eps}[W,X]}\notag\\
&=\theta^{\frac{1}{2}}\frac{\mathbb{E}_{X^{t,x}}\rho^{\eps}(X(s)-y)\mathscr{E}_{\theta,t}^{\eps}[W,X]}{\mathbb{E}_{X^{t,x}}\mathscr{E}_{\theta,t}^{\eps}[W,X]}\notag\\
&=\theta^{\frac{1}{2}}\widehat{\mathbb{E}}_{X^{t,x}}^{\theta,W,\eps}\rho^{\eps}(X(s)-y).\label{eq:MalliavinDerivofh}
\end{align}
Therefore, we have
\begin{align*}
&\Var\left(\int h_{\theta}^{\eps}(t,x)\psi(x)\,\dif x\right) \\
  & \le\int_{0}^{t}\int\mathbf{E}\left(\int\theta^{\frac{1}{2}}\widehat{\mathbb{E}}_{X^{t,x}}^{\theta,W,\eps}\rho^{\eps}(X(s)-y)\psi(x)\,\dif x\right)^{2}\,\dif y\,\dif s\\
  & =\theta\int_{0}^{t}\int\int\mathbf{E}\widehat{\mathbb{E}}_{X_{1}^{t,x_{1}},X_{2}^{t,x_{2}}}^{\theta,W,\eps}\biggl(\rho^{\eps}(X_{1}(s)-y)\rho^{\eps}(X_{2}(s)-y)\biggr)\notag\\
 &\qquad \qquad \qquad \qquad \cdot\psi(x_{1})\psi(x_{2})\,\dif x_{1}\,\dif x_{2}\,\dif y\,\dif s\\
  & =\theta\int\int\mathbf{E}\widehat{\mathbb{E}}_{X_{1}^{t,x_{1}},X_{2}^{t,x_{2}}}^{\theta,W,\eps}\mathscr{I}_{t}^{\eps}[X_{1},X_{2}]\psi(x_{1})\psi(x_{2})\,\dif x_{1}\,\dif x_{2},
\end{align*}
which is \cref{eq:varbound-1}.
\end{proof}
Now we derive a bound on the terms of the Taylor-like expansion described in the previous section.
\begin{prop}
\label{prop:fajbkrbound}Fix notation as in \cref{prop:frderivbound-1},
with $J=2$ and
\begin{equation}
\mathscr{Q}[\mathbf{X}_{0}]=\mathscr{I}_{t}^{\eps}[X_{0;1},X_{0;2}].\label{eq:ourQ}
\end{equation}
We have, if $(\alpha,j)\ne(\beta,k)$ and $\eps\le\e^{-t/2}$, that
\[
F_{\alpha,j;\beta,k;r,\mathscr{Q}}(0)\le C^{r+1}(r+1)!|\log\eps|^{r}(t+1+\log|x_{1}-x_{2}|_{\mathbf{T}^{2}}^{-2}).
\]
\end{prop}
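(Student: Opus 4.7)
The plan is to exploit the fact that at $\theta=0$ the tilted measure $\widehat{\mathbb{P}}^{0,W,\eps}$ collapses to the ordinary product Wiener measure, and then perform a short case analysis on how the index pair $(\alpha,j),(\beta,k)$ overlaps with $\{(0,1),(0,2)\}$, combining the two Brownian intersection estimates from \cref{sec:BMintersectionests}.

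First I would observe that at $\theta = 0$ we have $\widetilde{h}^\eps_0 \equiv 0$ (the driving noise carries no weight and the initial data vanishes), so $\kappa^\eps_0(t)=0$ and hence $\mathscr{E}^\eps_{0,t}[W,X]\equiv 1$. Therefore the Radon--Nikodym derivative \cref{eq:RNderiv-1} is identically~$1$ at $\theta=0$, which turns the six paths $\{X_{a;j}\}_{a\in\{0,1,2\},\,j\in\{1,2\}}$ into independent backward Brownian motions, with $X_{a;j}$ started at $x_j$, and
\[
F_{\alpha,j;\beta,k;r,\mathscr{Q}}(0)=\mathbb{E}_{\underline{\mathbf{X}}^{t,\underline{\mathbf{x}}}}\!\bigl[\mathscr{I}_t^\eps[X_{0;1},X_{0;2}]\,\mathscr{I}_t^\eps[X_{\alpha;j},X_{\beta;k}]^r\bigr].
\]

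The next step is to record a uniform consequence of \cref{lem:logupperbound}: by conditioning on one path and treating it as the deterministic path $Y$ in that lemma,
\[
\mathbb{E}\,\mathscr{I}_t^\eps[X_1,X_2]^r \le C^r r!\,|\log\eps|^r
\]
for any two independent Brownian motions on $\mathbf{T}^2$, irrespective of their starting points. Together with the first-moment bound $\mathbb{E}\mathscr{I}_t^\eps[X_{0;1},X_{0;2}]\le C(t+1+\log|x_1-x_2|_{\mathbf{T}^2}^{-2})$ coming from \cref{lem:X0Yest} with $r=1$, this is all the input that is needed.

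The case analysis splits on $|\{\alpha,\beta\}\cap\{0\}|$. \textbf{Case A}, $\alpha,\beta\in\{1,2\}$: the pair $X_{\alpha;j},X_{\beta;k}$ is independent of $(X_{0;1},X_{0;2})$, so the expectation factorizes, and the two estimates above immediately yield $C^{r+1}r!(t+1+\log|x_1-x_2|_{\mathbf{T}^2}^{-2})|\log\eps|^r$. \textbf{Case B}, exactly one of $\alpha,\beta$ equals $0$: condition on $(X_{0;1},X_{0;2})$; the remaining integration over the path not in $\mathbf{X}_0$ is controlled by \cref{lem:logupperbound} with the conditioned $X_{0;j}$ playing the role of $Y$, contributing $C^r r!|\log\eps|^r$, after which the outer expectation of $\mathscr{I}_t^\eps[X_{0;1},X_{0;2}]$ is bounded as in Case A. \textbf{Case C}, $\alpha=\beta=0$: the constraint $(\alpha,j)\ne(\beta,k)$ forces $\{j,k\}=\{1,2\}$, so the quantity collapses to $\mathbb{E}\mathscr{I}_t^\eps[X_{0;1},X_{0;2}]^{r+1}$ and \cref{lem:X0Yest} applied at exponent $r+1$ gives the claim directly.

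The only mildly delicate point, and the one I would be careful about, is that a naive Cauchy--Schwarz on the product $\mathscr{I}[X_{0;1},X_{0;2}]\cdot\mathscr{I}[X_{\alpha;j},X_{\beta;k}]^r$ would introduce a spurious factor of $|\log\eps|^{1/2}$ that breaks the asserted bound; one must instead use the full independence of the six paths at $\theta=0$ (or conditioning on $X_{0;1},X_{0;2}$ in Case B) to obtain the clean factorization. Aside from that, the argument is straightforward bookkeeping.
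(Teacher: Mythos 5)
Your proof is correct and essentially the paper's: both evaluate at $\theta=0$, where the six paths become independent, and then combine \cref{lem:logupperbound} (applied conditionally, treating one path as the fixed $Y$) with \cref{lem:X0Yest}. The paper merges your Cases A and B into a single step by assuming WLOG $(\beta,k)\notin\{(0,1),(0,2)\}$ and conditioning on $(X_{0;1},X_{0;2},X_{\alpha;j})$, but the underlying decoupling argument is the same, and your remark that a naive Cauchy--Schwarz would leak a spurious $|\log\eps|^{1/2}$ correctly explains why the conditioning route is the right one.
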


\begin{proof}
If $\{(\alpha,j),(\beta,k)\}=\{(0,1),(0,2)\}$, then this is \cref{lem:X0Yest}. So we assume without loss of generality that $(\beta,k)\not\in\{(0,1),(0,2)\}$, since the case $(\alpha,j)\not\in\{(0,1),(0,2)\}$ is similar.
Then note that 
\begin{align*}
&F_{\alpha,j;\beta,k;r,\mathscr{Q}}(0) =\mathbb{E}_{\underline{\mathbf{X}}^{t,\underline{\mathbf{x}}}}(\mathscr{I}_{t}^{\eps}[X_{0;1},X_{0;2}]\mathscr{I}_{t}^{\eps}[X_{\alpha;j},X_{\beta;k}]^{r})\\
 & =\mathbb{E}_{\underline{\mathbf{X}}^{t,\underline{\mathbf{x}}}}\left[\mathscr{I}_{t}^{\eps}[X_{0;1},X_{0;2}]\mathbb{E}_{\underline{\mathbf{X}}^{t,\underline{\mathbf{x}}}}\left(\mathscr{I}_{t}^{\eps}[X_{\alpha;j},X_{\beta;k}]^{r}\,\middle|\,X_{0;1},X_{0;2},X_{\alpha;j}\right)\right]\\
 & \le C^{r}r!|\log\eps|^{r}\mathbb{E}_{\underline{\mathbf{X}}^{t,\underline{\mathbf{x}}}}\mathscr{I}_{t}^{\eps}[X_{0;1},X_{0;2}]\\
 & \le C^{r}r!|\log\eps|^{r}(t+1+\log|x_{1}-x_{2}|_{\mathbf{T}^{2}}^{-2}),
\end{align*}
where the first inequality follows by \cref{lem:logupperbound}  and the second inequality 
by \cref{lem:X0Yest}.
\end{proof}
We are now ready to show our tightness result. 

\begin{proof}[Proof of Theorem \ref{thm:maintheorem-quantitative}]
It follows from \cref{prop:fajbkrbound} 
that, again with the choice of $\mathscr{Q}$ as in \cref{eq:ourQ},
there is a constant $C$ so that, as long as $\eps\le\e^{-t/2}$,
\[
G_{r,\mathscr{Q}}(0)\le C^{r+1}(r+1)!(t+1+\log|x_{1}-x_{2}|_{\mathbf{T}^2}^{-2}).
\]
Combining this with \cref{cor:ginfseriesbound}, we have that
\begin{align}
\mathbf{E}\widehat{\mathbb{E}}_{X_{1}^{t,x_{1}},X_{2}^{t,x_{2}}}^{\theta,W,\eps}\mathscr{I}_{t}^{\eps}[X_{1},X_{2}]&=G_{0, \mathscr{Q}}(\theta)\le\sum_{r=0}^{\infty}\frac{(C\theta)^{r}}{r!}G_{r, \mathscr{Q}}(0) \notag \\
& \le\left(t+1+\log|x_{1}-x_{2}|_{\mathbf{T}^2}^{-2}\right)\sum_{r=0}^{\infty}(C'\theta)^{r}\nonumber \\
 & \le\frac{\theta}{1-C'\theta}(t+1+\log|x_{1}-x_{2}|_{\mathbf{T}^{2}}^{-2}), \label{eq:covbound}
\end{align}
as long as $\theta$ is sufficiently small. Then, using \cref{lem:varcomp}
and \cref{eq:covbound}, and identifying $\mathbf{T}^2$ with $(-1/2,1/2]^2$ in the third and fourth lines below, we have for any $\psi\in\mathcal{C}_c((-1/2, 1/2)^{2})$,
\begin{align}
&\mathbf{E}\left|\int h_{\theta}^{\eps}(t,x)\mathcal{S}^{2^{-n}}\psi(x)\,\dif x\right|^{2} \notag\\
& \le\theta\int\int\mathcal{S}^{2^{-n}}\psi(x_{1})\mathcal{S}^{2^{-n}}\psi(x_{2})\mathbf{E}\widehat{\mathbb{E}}_{X_{1}^{t,x_{1}},X_{2}^{t,x_{2}}}^{\theta,W,\eps}\mathscr{I}_{t}^{\eps}[X_{1},X_{2}]\,\dif x_{1}\,\dif x_{2}\nonumber \\
&= 2^{4n}\theta\int_{\mathbf{R}^2}\int_{\mathbf{R}^2}\psi(2^{n}x_{1})\psi(2^{n}x_{2})\mathbf{E}\widehat{\mathbb{E}}_{X_{1}^{t,x_{1}},X_{2}^{t,x_{2}}}^{\theta,W,\eps}\mathscr{I}_{t}^{\eps}[X_{1},X_{2}]\,\dif x_{1}\,\dif x_{2}\nonumber \\
 & \le\frac{\theta^{2}}{1-C'\theta}\int_{\mathbf{R}^2}\int_{\mathbf{R}^2}\psi(x_{1})\psi(x_{2})(t+1+2n\log2+\log|x_{1}-x_{2}|_{\mathbf{T}^{2}}^{-2})\,\dif x_{1}\,\dif x_{2}\nonumber \\
 & \le\frac{\theta^{2}}{1-C'\theta}\|\psi\|_{L^\infty}^{2}\left(t+1+2n\log2+\int\int\log|x_{1}-x_{2}|_{\mathbf{T}^{2}}^{-2}\,\dif x_{1}\,\dif x_{2}\right),\label{eq:final-varbound-1}
\end{align}
which proves that $\{h_{\theta}^{\eps}(t,\cdot)\}_{\eps>0}$ is tight
in $\mathcal{C}^{-1-\delta}(\mathbf{T}^{2})$ for any $\delta>0$, by \cref{thm:tightness-fixedtime} (with $p=2$, $\beta < 0$ arbitrary, and $\alpha < \beta-1$ arbitrary). Next,
for any $\psi\in\mathcal{C}_c((-1/2, 1/2)^{2})$, $\phi\in\mathcal{C}_c(\mathbf{R}_{>0})$, $n\ge k\ge 1$, $t>2^{-2k}$, and $\xi(t,x) = \phi(t)\psi(x)$, the Cauchy--Schwarz inequality gives
\begin{align*}
&\mathbf{E} \left|\int_0^\infty\int h_{\theta}^{\eps}(s,x)\mathcal{S}^{2^{-n}}_{\mathfrak{s}}\xi(t-s, x)\,\dif x\,\dif s\right|^{2}\\
 & \le2^{8n}\left(\int_0^\infty|\phi(2^{2n}(t-s))|\,\dif s\right)\\
 &\qquad \qquad \cdot \int_0^\infty|\phi(2^{2n}(t-s))|\mathbf{E}\left|\int_{\mathbf{R}^2} h_{\theta}^{\eps}(s,x)\psi(2^{n}x)\,\dif x\right|^{2}\,\dif s,
 \end{align*}
 where, as before, we identified $\mathbf{T}^2$ with $(-1/2,1/2]^2$ in the last line. 
Now,
\begin{align*}
\int_0^\infty|\phi(2^{2n}(t-s))|\,\dif s &\le 2^{-2n} \|\phi\|_{L^1}. 
\end{align*}
On the other hand, by \cref{eq:final-varbound-1}, 
\begin{align*}
&\mathbf{E}\left|\int_{\mathbf{R}^2} h_{\theta}^{\eps}(s,x)\psi(2^{n}x)\,\dif x\right|^2 \\
&\le \frac{2^{-4n}\theta^{2}}{1-C'\theta}\|\psi\|_{L^\infty}^{2}\left(s+1+n\log2+\int\int\log|x_{1}-x_{2}|_{\mathbf{T}^{2}}^{-2}\,\dif x_{1}\,\dif x_{2}\right).
\end{align*}
Thus, 
\begin{align*}
&\int_0^\infty|\phi(2^{2n}(t-s))|\mathbf{E}\left|\int_{\mathbf{R}^2} h_{\theta}^{\eps}(s,x)\psi(2^{n}x)\,\dif x\right|^{2}\,\dif s\\
&\le \frac{2^{-4n}\theta^{2}}{1-C'\theta}\|\psi\|_{L^\infty}^{2} \int_0^\infty |\phi(2^{2n}(t-s))|\biggl(s+1+2n\log2\\
&\qquad \qquad +\int\int\log|x_{1}-x_{2}|_{\mathbf{T}^{2}}^{-2}\,\dif x_{1}\,\dif x_{2}\biggr)\, \dif s.
\end{align*}
But since $\phi\in \mathcal{C}_c(\mathbf{R}_{>0})$, 
\begin{align*}
\int_0^\infty |\phi(2^{2n}(t-s))|s\, \dif s &= 2^{-2n} \int_0^{2^{2n}t}|\phi(u)| (t-2^{-2n}u)\, \dif u\\
&\le 2^{-2n} C(\phi)t,
\end{align*}
where $C(\phi)$ depends only on $\phi$. Similarly,
\[
\int_0^\infty |\phi(2^{2n}(t-s))|\, \dif s \le 2^{-2n} C(\phi).
\]
Combining these observations, we see that
\begin{align*}
&\mathbf{E} \left|\int_0^\infty\int h_{\theta}^{\eps}(s,x)\mathcal{S}^{2^{-n}}_{\mathfrak{s}}\xi(t-s,x)\,\dif x\,\dif s\right|^{2}\notag\\
 & \le\frac{\theta^{2}}{1-C'\theta}C(\phi, \psi)\biggl(t+1\notag \\
 &\qquad \qquad +2n\log2+\int\int\log|x_{1}-x_{2}|_{\mathbf{T}^{2}}^{-2}\,\dif x_{1}\,\dif x_{2}\biggr),
\end{align*}
where $C(\phi, \psi)$ depends only on $\phi$ and $\psi$. 
In light of \cref{thm:tightness-parabolic} (with $p=2$, $\beta<0$ arbitrary and $\alpha < \beta-2$ arbitrary) and \cref{rem:prodstruct}, this shows
that $\{h_{\theta}^{\eps}\}_{\eps>0}$ is tight in $\mathcal{C}_{\mathfrak{s};\mathrm{loc}}^{-2-\delta}(\mathbf{R}_{>0}\times\mathbf{T}^{2})$ for any $\delta>0$.
\end{proof}

\section{Non-vanishing effect of the nonlinearity\label{sec:nontriviality}}
In this section we prove \cref{thm:limitsnontrivial}: that the zeroeth Fourier mode of our limiting KPZ solution has a different law than the zeroeth Fourier mode of the additive stochastic heat equation with the same noise strength. To do this, we notice that the zeroeth Fourier mode of the solution to the ASHE only sees contributions from the zeroeth Fourier mode of the white noise, since the Fourier transform diagonalizes the Laplacian. Because the KPZ nonlinearity depends only on the derivative of the solution, and thus does not see the zeroeth Fourier mode, so the contributions from the zeroeth Fourier mode of the noise to the zeroeth Fourier mode of the solution are the same for the KPZ solution as they are for the ASHE solution. On the other hand, as we will show in this section, the KPZ nonlinearity \emph{does} make contributions from \emph{higher} Fourier modes of the white noise to the zeroeth Fourier mode of the solution. These extra contributions are what distinguishes the KPZ solution from the ASHE solution.

In \cref{lem:reduction} below, we will formalize the idea that the added nonlinear contributions to the zeroeth Fourier mode from higher Fourier modes of the noise will distinguish the KPZ solution from the ASHE solution. In the remainder of this section, we will show that these contributions exist for positive $\eps$ and do not vanish as $\eps\to 0$. In passing to the limit, the elementary \cref{lem:nontrivialitycondition} below will play an important role.

For $\xi\in L^{2}([0,t]\times\mathbf{T}^{2};\mathbf{C})$,
define
\begin{equation}
W_{t}[\xi]=\int_{0}^{t}\int\xi(s,x)W(\dif x\,\dif s).\label{eq:Zdef}
\end{equation}
We note that $W_{t}[\xi]$ is a Gaussian random variable, that $W_{t}[\overline{\xi}]=\overline{W_{t}[\xi]}$,
and that
\begin{equation}
\mathbf{E}W_{t}[\xi]W_{t}[\eta]=\int_{0}^{t}\int\xi(s,x)\eta(s,x)\,\dif x\,\dif s.\label{eq:covariance}
\end{equation}
Therefore, if $\{\xi_{\alpha}\}_{\alpha}$ is an orthonormal set in
$L^{2}([0,t]\times\mathbf{T}^{2};\mathbf{C})$, then $\{W_{t}[\xi_{\alpha}]\}_{\alpha}$
is an orthonormal set in $L^{2}(\Omega,\mathcal{F},\mathbf{P};\mathbf{C})$.
Moreover, if $\{\xi_{\alpha}\}_{\alpha}$ is an orthonormal set in
$L^{2}([0,t]\times\mathbf{T}^{2};\mathbf{C})$ which also satisfies\begin{equation}
\int_{0}^{t}\int\xi_{\alpha}(s,x)\xi_{\beta}(s,x)\,\dif x\,\dif s=\delta_{\alpha,\beta},\label{eq:pseudocovariance}
\end{equation} 
then $\{W_{t}[\xi_{\alpha}]\}_{\alpha}$ is a collection of independent
complex Gaussian random variables. (Note that the difference between \cref{eq:pseudocovariance} and orthogonality in $L^{2}([0,t]\times\mathbf{T}^{2};\mathbf{C})$ is that no complex conjugate is taken in \cref{eq:pseudocovariance}.)

Recall the function $v_{\theta}$ solving the additive stochastic
heat equation \cref{eq:htildePDE-1}. We consider the spatial Fourier
transform at $0$, 
\[
\widehat{v_{\theta}}(t,0)=\int v_{\theta}(t,x)\,\dif x.
\]
Taking the Fourier transform of  \cref{eq:htildePDE-1} and evaluating at $0$, we see that $\widehat{v_{\theta}}(t,0)$
solves the stochastic differential equation
\begin{equation}
\dif\widehat{v_{\theta}}(t,0)=\sqrt{\theta}\dif W_{t}[1],\label{eq:vbarprob}
\end{equation}
and so
\[
\widehat{v_{\theta}}(t,0)=\sqrt{\theta}W_{t}[1].
\]
Now define
\[
\widetilde{W}_t=W_t-W_{t}[1].
\]
Here, of course, $W_t$ denotes the cylindrical Wiener process at time $t$. Analogously to \cref{eq:Zdef}, define
\[
 \widetilde{W}_t[\xi] = W_t[\xi]-\int_0^t\int \xi(s,y)\,\dif y\,\dif W_s[1].
\]
Now we have, using \cref{eq:covariance}, that
\[
\mathbf{E} \widetilde{W}_t[\xi]W_s[1] = \mathbf{E} \left(W_t[\xi]-\int_0^t\int \xi(s,y)\,\dif y\,\dif W_t[1]\right)W_s[1]=0,
\]
for all $\xi$, so $\widetilde{W}$ and $\{W_{t}[1]\}_{t\ge0}$ are
independent. Also, note that $W_t[1]$ is constant in space, so mollifying it has no effect. Thus,
\begin{equation}
\begin{cases}
\partial_{t}\widetilde{h}_{\theta}^{\eps}(t,x)=\frac{1}{2}\Delta\widetilde{h}_{\theta}^{\eps}(t,x)+\frac{1}{2}|\log\eps|^{-\frac{1}{2}}|\nabla\widetilde{h}_{\theta}^{\eps}(t,x)|^{2}&\\
\qquad \qquad \qquad +\sqrt{\theta}\dot{W}_{t}[1]+\sqrt{\theta}\dot{\widetilde{W}}^{\eps}(t,x), & t>0,x\in\mathbf{T}^{2},\\
\widetilde{h}_{\theta}^{\eps}(0,x)=0, & x\in\mathbf{T}^{2}.
\end{cases}\label{eq:htildesplitup}
\end{equation}
Now we define a function
\begin{equation}\label{eq:fdef}\widetilde{f}_\theta^\eps(t,x)=\widetilde{h}_\theta^\eps(t,x)-\widehat{v_\theta}(t,0),\end{equation}
so by \cref{eq:vbarprob} and \cref{eq:htildesplitup},
$\widetilde{f}_\theta^\eps(t,x)$ solves the SPDE
\[
\begin{cases}
\partial_{t}\widetilde{f}_{\theta}^{\eps}(t,x)=\frac{1}{2}\Delta\widetilde{f}_{\theta}^{\eps}(t,x)+\frac{1}{2}|\log\eps|^{-\frac{1}{2}}|\nabla\widetilde{f}_{\theta}^{\eps}(t,x)|^{2}&\\
\qquad \qquad \qquad +\sqrt{\theta}\dot{\widetilde{W}}^{\eps}(t,x), & t>0,x\in\mathbf{T}^{2},\\
\widetilde{f}_{\theta}^{\eps}(0,x)=0, & x\in\mathbf{T}^{2}.
\end{cases}
\]
Combining \cref{eq:fdef} with \cref{eq:fkform}, we can derive the expression
\begin{align}
 \widetilde{f}_\theta^\eps(t,x)&=|\log\eps|^{\frac{1}{2}}\log\mathbb{E}_{X^{t,x}}\exp\biggl\{ \theta^{\frac{1}{2}}|\log\eps|^{-\frac{1}{2}}\notag\\
&\qquad \qquad \qquad \qquad\qquad\qquad \cdot\int_{0}^{t}\int\rho^{\eps}(X(s)-y)\widetilde{W}(\dif y\,\dif s)\biggr\},\label{eq:ffk}
\end{align}
Since $\widetilde{W}$ and $\{W_t[1]\}_t$ are independent processes,
we can conclude from \cref{eq:vbarprob} and \cref{eq:ffk} that
$\widetilde{f}_{\theta}^{\eps}$ and $\widehat{v_{\theta}}(\cdot,0)$
are independent. Now define
\[
f_{\theta}^{\eps}(t,x)=\widetilde{f}_{\theta}^{\eps}(t,x)-\kappa_{\theta}^{\eps}(t),
\]
so that
\begin{equation}
h_{\theta}^{\eps}(t,x)=f_{\theta}^{\eps}(t,x)+\widehat{v_{\theta}}(t,0)\label{eq:hfv}
\end{equation}
and $f_{\theta}^{\eps}$ and $\widehat{v_{\theta}}(\cdot,0)$ are
independent. Moreover, since 
\begin{equation}\label{eq:hvexp0}
\mathbf{E}h_{\theta}^{\eps}(t,x)=\mathbf{E}\widehat{v_{\theta}}(t,0)=0,
\end{equation}
we have $\mathbf{E}f_{\theta}^{\eps}(t,x)=0$ as well.

Our primary goal in this section is to show that there is no sequence $\eps_{n}\downarrow0$
so that $\int h_{\theta}^{\eps_{n}}(t,x)\,\dif x$ converges in distribution
to $\widehat{v_{\theta}}(t,0)$. Without loss of generality, we will work with $t=1$ throughout. The proof for general $t$ is similar. We begin with the following reduction.
\begin{lem}\label{lem:reduction}
If
\[
\int h_{\theta}^{\eps_{n}}(1,x)\,\dif x\to\widehat{v_{\theta}}(1,0)
\]
in law, then
\[
\int f_{\theta}^{\eps_n}(1,x)\,\dif x\to0
\]
in probability.
\end{lem}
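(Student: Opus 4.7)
The plan is to exploit the additive decomposition $\int h^{\eps_n}_\theta(1,x)\,\dif x = A_n + B$, where
\[
A_n := \int f_\theta^{\eps_n}(1,x)\,\dif x, \qquad B := \widehat{v_\theta}(1,0),
\]
together with the independence of $f_\theta^{\eps_n}$ and $\widehat{v_\theta}(\cdot,0)$ established above \cref{eq:hfv}, and convert the problem into one about characteristic functions. By \cref{eq:hvexp0} we also have $\mathbf{E} A_n = 0$ for every $n$.

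First I would compute the distribution of $B$. From \cref{eq:vbarprob}, $B = \sqrt{\theta}\,W_1[1]$, and since $\mathbf{E}\,W_1[1]^2 = 1$ by \cref{eq:covariance}, $B$ is Gaussian with mean $0$ and variance $\theta > 0$. Hence its characteristic function
\[
\psi(\xi) := \mathbf{E}\e^{\ii\xi B} = \e^{-\theta\xi^{2}/2}
\]
is strictly positive, in particular nonzero, for every $\xi\in\mathbf{R}$. This nonvanishing property is the only structural input we need from the additive SHE; everything else will be a soft argument about independent sums.

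Next, let $\varphi_n(\xi) := \mathbf{E}\e^{\ii\xi A_n}$. By the independence of $A_n$ and $B$, the characteristic function of $A_n + B$ factors as $\varphi_n(\xi)\psi(\xi)$. The hypothesis that $A_n + B \to B$ in law together with L\'evy's continuity theorem gives $\varphi_n(\xi)\psi(\xi)\to\psi(\xi)$ pointwise in $\xi$. Dividing by $\psi(\xi) \ne 0$ yields $\varphi_n(\xi)\to 1$ for every $\xi\in\mathbf{R}$. Applying L\'evy's continuity theorem in the other direction, $A_n$ converges in law to the constant $0$, and convergence in law to a constant is equivalent to convergence in probability, which is exactly the conclusion.

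There is no serious obstacle here: the argument is essentially a one-line application of the nonvanishing of a Gaussian characteristic function, combined with the independent splitting of $h_\theta^\eps$ that was already assembled in the paragraphs preceding the lemma. The only thing to double-check carefully is that the measurability and integrability conditions required to identify $\varphi_n\psi$ as the characteristic function of $A_n+B$ are satisfied, but both $A_n$ and $B$ are genuine real-valued random variables (integrals of the distribution-valued objects against the constant $1$ on the torus), so this is automatic.
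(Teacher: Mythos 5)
Your proof is correct, and it takes a slightly different and arguably cleaner route than the paper's. Both arguments ultimately hinge on the same structural fact: the characteristic function of a nondegenerate Gaussian is nowhere vanishing, so a convolution equation $\mathrm{Law}(B) = \mathrm{Law}(F) * \mathrm{Law}(B)$ with $F \perp B$ forces $F \equiv 0$. The difference is in how you pass to the limit.

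The paper first invokes the uniform $L^2$ bound from \cref{eq:final-varbound-1} (together with \cref{eq:hfv} and the Gaussianity of $\widehat{v_\theta}(1,0)$) to conclude that $\bigl\{\int f_\theta^\eps(1,x)\,\dif x\bigr\}_\eps$ is uniformly $L^2$-bounded, hence tight; it then extracts a jointly convergent subsequence of the triple $\bigl(\int h, \int f, \widehat{v_\theta}(1,0)\bigr)$, obtains a limit $F$ independent of $\widehat{v_\theta}(1,0)$ satisfying $\widehat{v_\theta}(1,0) \overset{d}{=} F + \widehat{v_\theta}(1,0)$, and reads off $F \equiv 0$ from the characteristic-function argument. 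Your version sidesteps the tightness step entirely: convergence in law of $A_n + B$ already gives pointwise convergence of the product characteristic function (the easy direction of L\'evy), division by $\psi \ne 0$ gives $\varphi_n \to 1$ pointwise, and then L\'evy's continuity theorem (the limit $1$ is continuous at $0$) delivers $A_n \to 0$ in law — and hence in probability — without needing any a priori moment control. This is a genuine simplification: you replace the Prokhorov-plus-subsequence bookkeeping with a single application of L\'evy's theorem, whose hypothesis (continuity at the origin of the limiting function) does the tightness work for free. The paper does still need the $L^2$ bound from \cref{eq:final-varbound-1} elsewhere in the proof of \cref{thm:fdoesntgoto0}, so nothing is lost globally, but for the lemma as stated your argument is the more economical one.
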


\begin{proof}
We first note that, by \cref{eq:final-varbound-1}, we have that $\left\{ \int h_{\theta}^{\eps}(1,x)\,\dif x\right\}_{\eps>0}$
is uniformly bounded in $L^{2}$, so by \cref{eq:hfv} and the fact
that $\widehat{v_{\theta}}(1,0)$ is Gaussian, $\left\{ \int f_{\theta}^{\eps}(1,x)\,\dif x\right\}_{\eps>0}$
is uniformly bounded in $L^{2}$ as well. Therefore, 
\[
\left\{ \left(\int h_{\theta}^{\eps}(1,x)\,\dif x,\int f_{\theta}^{\eps}(1,x)\,\dif x,\, \widehat{v_{\theta}}(1,0)\right)\right\}_{\eps>0}
\]
converges in law along subsequences. Now suppose that there is
a sequence $\eps_{n}\downarrow0$ so that 
\begin{equation}
\int h_{\theta}^{\eps_{n}}(1,x)\,\dif x\to\widehat{v_{\theta}}(1,0)\label{eq:htov}
\end{equation}
in law. Possibly replacing $(\eps_{n})$ by a subsequence,
we can assume that
\[
\left(\int h_{\theta}^{\eps_{n}}(1,x)\,\dif x,\int f_{\theta}^{\eps_{n}}(1,x)\,\dif x,\, \widehat{v_{\theta}}(1,0)\right)\to(H,F,\widehat{v_{\theta}}(1,0))
\]
in law for some random variables $H$ and $F$. Since $\int f_{\theta}^{\eps_{n}}(1,x)\,\dif x$
and $\widehat{v_{\theta}}(1,0)$ are independent for each $\eps_{n}$,
we must also have that $F$ and $\widehat{v_{\theta}}(1,0)$ are independent.
But by \cref{eq:hfv}, we must have $H=F+\widehat{v_{\theta}}(1,0)$,
so by \cref{eq:htov} we must have that $\widehat{v_{\theta}}(1,0)$
is equal in distribution to $F+\widehat{v_{\theta}}(1,0)$. This means
that the characteristic function of $F$ must be equal to $1$ on
the support of the characteristic function of $\widehat{v_{\theta}}(1,0)$,
which is all of $\mathbf{R}$ since $\widehat{v_{\theta}}(1,0)$ is
Gaussian. Thus, the characteristic function of $F$ must be identically
$1$, and so $F$ must be deterministically equal to $0$. Therefore 
$\int f_{\theta}^{\eps_{n}}(1,x)\,\dif x$ converges in law
to the point mass at $0$, and hence converges in probability to $0$.
\end{proof}
Therefore, to prove \cref{thm:limitsnontrivial} it is sufficient to
prove the following theorem.
\begin{thm}
\label{thm:fdoesntgoto0}There is no sequence $\eps_{n}\downarrow0$
so that $\int f_{\theta}^{\eps_{n}}(1,x)\,\dif x\to0$ in probability. Moreover, there is no sequence 
$\eps_{n}\downarrow0$
so that $\int h_{\theta}^{\eps_{n}}(1,x)\,\dif x\to0$ in probability.
\end{thm}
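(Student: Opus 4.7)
The plan is to prove the first assertion directly and deduce the second from it. For the second, suppose for contradiction that $\int h_\theta^{\eps_n}(1,x)\,\dif x \to 0$ in probability. By \eqref{eq:hfv} and the independence of $f_\theta^\eps$ and $\widehat{v_\theta}(\cdot,0)$, the characteristic function of $\int h_\theta^{\eps_n}(1,x)\,\dif x$ at $t$ factors as the product of the characteristic functions of $\int f_\theta^{\eps_n}(1,x)\,\dif x$ and of $\widehat{v_\theta}(1,0)$. Since $\widehat{v_\theta}(1,0)$ is a non-degenerate Gaussian, its characteristic function has modulus strictly less than $1$ for $t \ne 0$. But for the product to tend to $1$, the modulus of the first factor would have to exceed $1$, which is impossible for a characteristic function.

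For the first assertion, the strategy is to combine (i) a uniform lower bound $\liminf_{\eps \downarrow 0} \Var(\int f_\theta^\eps(1,x)\,\dif x) > 0$ with (ii) a uniform upper bound $\sup_\eps \mathbf{E}|\int f_\theta^\eps(1,x)\,\dif x|^p < \infty$ for some $p>2$. The elementary \cref{lem:nontrivialitycondition} packages these into the conclusion: $L^p$-boundedness for $p > 2$ supplies uniform integrability of $(\int f_\theta^\eps(1,x)\,\dif x)^2$, so convergence to $0$ in probability would force $L^2$-convergence and hence $\Var \to 0$, contradicting (i).

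For (i), use the identity $\Var(\int f_\theta^\eps(1,x)\,\dif x) = \Var(\int h_\theta^\eps(1,x)\,\dif x) - \theta$ (by independence and \eqref{eq:hvexp0}) to reduce to showing that the second Wiener chaos component of $\int h_\theta^\eps(1,x)\,\dif x$ has $L^2$ norm bounded below uniformly in $\eps$. Applying \eqref{eq:MalliavinDerivofh} and \eqref{eq:malliavinderiv} iteratively, the second-chaos kernel is
\[
K_2^\eps(s_1,y_1,s_2,y_2) = \theta|\log\eps|^{-1/2}\int \mathbf{E}\widehat{\mathbb{E}}^{\theta,W,\eps}_{X^{1,x},\widetilde X^{1,x}}\rho^\eps(X(s_1)-y_1)\bigl[\rho^\eps(X(s_2)-y_2) - \rho^\eps(\widetilde X(s_2)-y_2)\bigr]\,\dif x.
\]
At $\theta = 0$ the tilted measure collapses to Wiener measure, and a Fourier-series computation on $\mathbf{T}^2$ using the Brownian transition kernel $p_t(y)=\sum_k e^{-2\pi^2|k|^2 t}e^{2\pi i k\cdot y}$ expresses $\|K_2^\eps\|_{L^2}^2$ as $\theta^2|\log\eps|^{-1}\sum_{k\ne 0}|\widehat\rho(\eps k)|^4 A_k$ with $A_k \asymp |k|^{-2}$ for large $|k|$. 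The two-dimensional logarithmic divergence $\sum_{0<|k|\lesssim 1/\eps}|k|^{-2} \asymp |\log\eps|$ exactly cancels the $|\log\eps|^{-1}$ prefactor, producing a finite positive limit $c\theta^2$; corrections for $\theta > 0$ are absorbed using \cref{cor:ginfseriesbound}, since each factor of $\theta$ beyond leading order brings a corresponding factor that keeps the remainder $O(\theta^3)$.

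For (ii), the plan is to control higher moments through hypercontractivity on the Wiener chaos decomposition $\int f_\theta^\eps(1,x)\,\dif x = \sum_{n\ge 2} J_n^\eps$ (the first-chaos piece coincides with $\widehat{v_\theta}(1,0)$ and is removed by passing from $h_\theta^\eps$ to $f_\theta^\eps$). One uses $\|J_n^\eps\|_{L^p} \le (p-1)^{n/2}\|J_n^\eps\|_{L^2}$ combined with $L^2$-estimates for $J_n^\eps$ produced by iterated Malliavin derivatives --- each derivative contributing a factor $\sqrt\theta/\sqrt{|\log\eps|}$ from \eqref{eq:detaE} and \eqref{eq:malliavinderiv}, paired against intersection-time integrations controlled by \cref{sec:BMintersectionests} --- aiming for geometric decay in $n$ that yields a summable $L^p$ series when $\theta$ is small. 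This is the technical crux and the main obstacle: pushing the Taylor-type bounds of \cref{sec:Taylorbound} into higher moments while keeping the combinatorial bookkeeping tractable as $n$ grows requires a refinement of the ``collapsing'' estimate in \cref{prop:frderivbound-1}. The Fourier calculation underlying (i) is relatively transparent, but (ii) is where the bulk of the technical work would lie.
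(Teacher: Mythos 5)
Your reduction of the second assertion to a characteristic-function argument (via independence of $f^\eps_\theta$ and $\widehat{v_\theta}(\cdot,0)$ and the non-degenerate Gaussianity of the latter) is correct and slightly more direct than what the paper does, so that part is fine. The genuine problem is with your plan for the first assertion, specifically step (ii). You propose to establish $\sup_\eps \mathbf{E}\bigl|\int f_\theta^\eps(1,x)\,\dif x\bigr|^p<\infty$ for some $p>2$ for the \emph{full} random variable, and you correctly flag this as the technical crux; but that bound is not available with the paper's machinery, and the whole design of \cref{lem:nontrivialitycondition} is to make it unnecessary. Read the hypotheses of that lemma carefully: it is asymmetric --- $A_n$ must be bounded in $L^p$ for some $p>2$, but $B_n$ need only be bounded in $L^2$. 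This asymmetry is the point. If you already had an $L^p$ bound on $A_n+B_n$ for $p>2$, you would not need the lemma at all; the standard uniform-integrability argument you sketch would suffice. The lemma exists precisely so one does not have to prove such a bound.

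The paper's route is therefore different in an essential way. Instead of trying to control higher moments of $\int f_\theta^\eps(1,x)\,\dif x$, one splits it as $A_\theta^\eps + B_\theta^\eps$, where $A_\theta^\eps$ is the orthogonal projection onto a specific orthonormal family inside the second Wiener chaos (the random variables $W_1[\e_{k,\ell}]\overline{W_1[\e_{k,\ell}]}-1$ for $k\in\mathbf{Z}_+^2$, $|k|\ge k_0$, $\ell<|k|^2$), and $B_\theta^\eps$ is the remainder. Then: (a) $\mathbf{E}A_\theta^\eps B_\theta^\eps=0$ by construction; (b) $\mathbf{E}(A_\theta^\eps)^2\ge c'\theta^2$, which is the substance of \cref{lem:abound} and the Fourier-mode computation you correctly identify; (c) $\mathbf{E}|A_\theta^\eps|^p\le C_p\,\mathbf{E}(A_\theta^\eps)^2$ for any $p>2$ by Gaussian hypercontractivity, \emph{automatically}, because $A_\theta^\eps$ is a homogeneous second-chaos element and its higher moments are controlled by its variance --- no iterated Malliavin estimates, no summing over chaos order $n$, none of the combinatorial bookkeeping you were worried about; (d) $\mathbf{E}(B_\theta^\eps)^2 \le \mathbf{E}\bigl(\int f_\theta^\eps\bigr)^2\le C$ by the already-proved $L^2$ bound \cref{eq:final-varbound-1}. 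This satisfies exactly the hypotheses of \cref{lem:nontrivialitycondition} and closes the argument without ever touching moments of order greater than $2$ of the full quantity. In short: the hypercontractivity is applied to the \emph{fixed} second-chaos piece, not chaos-by-chaos across the entire decomposition, and this is what makes the proof go through while \cref{conj:betterregularity} (which would require exactly the higher-moment control you were aiming for) remains open.
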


(We also have to show that subsequential limits of $\int f^\eps_\theta(1,x)\, \dif x$ and $\int h^\eps_\theta(1,x)\, \dif x$ have mean zero. But this is easy because these random variables have mean zero and are uniformly bounded in $L^2$ by \cref{eq:final-varbound-1} and \cref{eq:hfv}.)

We will prove \cref{thm:fdoesntgoto0} at the end of this section. Our
strategy will be to show that the projection of $f_{\theta}^{\eps}$
onto the second Wiener chaos has $L^{2}$ norm which is
not going to $0$ with $\eps$. (See, for example, \cite{Jan97} for background
on the Wiener chaos decomposition.) To show that this is sufficient,
we will need the following lemma.
\begin{lem}
\label{lem:nontrivialitycondition}Suppose that $\{A_{n}\},\{B_{n}\}$
are sequences of random variables defined on the same probability space 
and assume that the following conditions hold:
\begin{enumerate}
\item[$(1)$] $\mathbf{E}A_{n}=\mathbf{E}A_{n}B_{n}=0$ for each $n$.
\item[$(2)$] There is a constant $c>0$ so that $\mathbf{E}A_{n}^{2}\ge c$ for
each $n$.
\item[$(3)$] There is a constant $C>0$ and a constant $p>2$ so that $\mathbf{E}|A_{n}|^{p}$ and $\mathbf{E}B_{n}^{2}$ are bounded by $C$
for each $n$.
\end{enumerate}
Then $A_{n}+B_{n}$ cannot converge in probability to $0$.
\end{lem}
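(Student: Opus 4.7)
The plan is to argue by contradiction. Assume that $A_n + B_n \to 0$ in probability, and derive a contradiction with hypothesis (2).

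The key observation is that hypotheses (1) and (2) together give a lower bound on the inner product of $A_n$ with $A_n + B_n$: namely,
\[
\mathbf{E}\bigl(A_n(A_n+B_n)\bigr)= \mathbf{E}A_n^2 + \mathbf{E}(A_nB_n) = \mathbf{E}A_n^2 \ge c.
\]
On the other hand, if $A_n + B_n \to 0$ in probability then $A_n(A_n+B_n) \to 0$ in probability. So my goal is to strengthen this to $L^1$ convergence, which will contradict the lower bound above.

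For this I would use Vitali's convergence theorem: I need uniform integrability of $\{A_n(A_n+B_n)\}$. The natural way to get this is to bound the sequence in $L^q$ for some $q > 1$. By Hölder's inequality, for conjugate exponents $r,s$ with $1/r + 1/s = 1$,
\[
\mathbf{E}|A_n(A_n+B_n)|^q \le \bigl(\mathbf{E}|A_n|^{qr}\bigr)^{1/r}\bigl(\mathbf{E}|A_n+B_n|^{qs}\bigr)^{1/s}.
\]
Choose $qr = p$ and $qs = 2$, so that $1/r + 1/s = q/p + q/2 = 1$, which gives $q = 2p/(p+2)$. The hypothesis $p > 2$ guarantees $q > 1$, and hypothesis (3) (together with Minkowski's inequality applied to $A_n+B_n = A_n + B_n$, using the $L^p \hookrightarrow L^2$ bound and the $L^2$ bound on $B_n$) gives the required uniform $L^q$ bound on $A_n(A_n+B_n)$.

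Thus $A_n(A_n+B_n)$ is uniformly integrable and converges to $0$ in probability, so by Vitali $\mathbf{E}A_n(A_n+B_n) \to 0$, contradicting the lower bound $\mathbf{E}A_n(A_n+B_n) \ge c > 0$. No step in this argument looks like a serious obstacle; the only small point to double-check is that the choice $q = 2p/(p+2)$ indeed exceeds $1$ precisely when $p > 2$, which is the reason the hypothesis on $A_n$ needs to be strictly stronger than an $L^2$ bound.
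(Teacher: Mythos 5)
Your proof is correct and reaches the contradiction by a route that is conceptually the same as the paper's but more cleanly packaged. The paper carries out the estimate by hand: it splits $0 = \mathbf{E}A_nB_n$ over the events $\{|A_n+B_n|<\eta\}$ and $\{|A_n+B_n|\ge\eta\}$, bounds the second piece with H\"older and Young using the $L^p$ and $L^2$ moment bounds, lower-bounds the first piece by $\mathbf{E}A_n^2$ minus small error terms, and then sends $\eta,\delta\to 0$. You instead observe that
\[
\mathbf{E}A_n(A_n+B_n)=\mathbf{E}A_n^2\ge c,
\]
and that the uniform $L^q$ bound with $q=2p/(p+2)>1$ (via H\"older with $qr=p$, $qs=2$) makes $\{A_n(A_n+B_n)\}$ uniformly integrable, so Vitali's theorem forces $\mathbf{E}A_n(A_n+B_n)\to 0$, a contradiction. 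This is the same underlying mechanism --- the moment hypotheses upgrade convergence in probability to convergence of the relevant expectation --- but your version delegates the bookkeeping to a standard theorem, which is a bit more economical and transparent. One small omission worth closing: you assert that $A_n+B_n\to 0$ in probability implies $A_n(A_n+B_n)\to 0$ in probability without comment. This does not hold for arbitrary $A_n$; you need that $\{A_n\}$ is tight. It is, because hypothesis (3) bounds $\mathbf{E}|A_n|^p$ uniformly and hence $\mathbf{P}(|A_n|>K)\le C/K^p$ by Markov, but this one-line justification should appear.
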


This lemma is an exercise in elementary probability theory. For completeness,
we include its proof in \cref{subsec:prob-proofs}. Now we begin the
proof of \cref{thm:fdoesntgoto0} in earnest. We start by writing an
expression for the coefficients of the relevant elements of the second
Wiener chaos in the decomposition of $\int f_{\theta}^{\eps}(1,x)\,\dif x$.
Define, for $k\in\mathbf{Z}^{2}$, $\ell\in\{0,\ldots,|k|^{2}-1\}$, $s\in [0,1]$, $y\in\mathbf{T}^2$,
\[
\e_{k,\ell}(s,y)=\exp\{2\pi\ii(\ell s+k\cdot y)\}.
\]
In the following, let $\mathbf{X}=(X_{1},X_{2})$ and $\mathbf{0}=(0,0)$ where 
$0=(0,0)\in\mathbf{T}^{2}$. Let $\widehat{\rho}$ be the Fourier transform
of $\rho$ (considered as a function from $\mathbf{R}^{2}$ into $\mathbf{R}$). For a path $X$, let 
\[
\mathscr{S}_{k,\ell}[X]=\int_{0}^{1}\e_{k,\ell}(s,X(s))\,\dif s.
\]
Define
\begin{equation}
\mathscr{A}_{k,\ell}[\mathbf{X}]=\int\left(|\mathscr{S}_{k,\ell}[X_{1}+x]|^{2}-\mathscr{S}_{k, \ell}[X_{1}+x]\overline{\mathscr{S}_{k,\ell}[X_{2}+x]}\right)\,\dif x.\label{eq:Axiexpr}
\end{equation}
Lastly, let
\begin{equation}\label{eq:adef}
a_{\theta;k,\ell}^{\eps}=\mathbf{E}\biggl[(W_{1}[\e_{k,\ell}]\overline{W_{1}[\e_{k,\ell}]}-1)\int f_{\theta}^{\eps}(1,x)\,\dif x\biggr]
\end{equation}
\begin{lem}
If $k\ne0$, then 
\begin{equation}
a_{\theta;k,\ell}^{\eps}=\theta|\widehat{\rho}(\eps k)|^{2}|\log\eps|^{-\frac{1}{2}}\mathbf{E}\widehat{\mathbb{E}}_{\mathbf{X}^{1,\mathbf{0}}}^{\theta,W,\eps}\mathscr{A}_{k,\ell}[\mathbf{X}].\label{eq:Aexpr}
\end{equation}
\end{lem}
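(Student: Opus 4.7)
The plan is to use Gaussian integration by parts twice to pull the Gaussians $W_1[\e_{k,\ell}]$ and $\overline{W_1[\e_{k,\ell}]}$ inside as Malliavin derivatives, then compute the resulting second Malliavin derivative of $\int f_\theta^\eps(1,x)\,\dif x$ explicitly, and finally identify the answer with $\mathscr{A}_{k,\ell}$ after a Fourier computation and a translation-invariance argument.

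\textbf{Step 1: Two applications of Gaussian IBP.} Write $Y=\int f_\theta^\eps(1,x)\,\dif x$. Since $\mathbf{E}h_\theta^\eps(1,x)=\mathbf{E}\widehat{v_\theta}(1,0)=0$ by \cref{eq:hvexp0}, we have $\mathbf{E} Y=0$, and by linearity \cref{eq:GIP} extends to complex-valued test functions. Applying it to $W_1[\e_{k,\ell}]\cdot(\overline{W_1[\e_{k,\ell}]}Y)$ gives
\[
\mathbf{E}[W_1[\e_{k,\ell}]\overline{W_1[\e_{k,\ell}]}Y]=\mathbf{E}[Y\,\|\e_{k,\ell}\|^2_{L^2([0,1]\times\mathbf{T}^2)}]+\mathbf{E}\int_0^1\!\int\e_{k,\ell}(s,y)\overline{W_1[\e_{k,\ell}]}\Dif_{s,y}Y\,\dif y\,\dif s,
\]
and the first term equals $\mathbf{E}[Y]=0$, which absorbs the $-1$ in the definition \cref{eq:adef} of $a^\eps_{\theta;k,\ell}$. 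Applying \cref{eq:GIP} a second time to the remaining $\overline{W_1[\e_{k,\ell}]}$ yields
\[
a^\eps_{\theta;k,\ell}=\mathbf{E}\int_0^1\!\int\!\int_0^1\!\int\e_{k,\ell}(s,y)\,\overline{\e_{k,\ell}(u,z)}\,\Dif_{u,z}\Dif_{s,y}Y\,\dif y\,\dif s\,\dif z\,\dif u.
\]

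\textbf{Step 2: Computing the second Malliavin derivative.} By \cref{eq:MalliavinDerivofh} and the fact that $\Dif_{s,y}\widehat{v_\theta}(1,0)=\sqrt\theta$ is constant in $(s,y)$, we have $\Dif_{s,y}Y=\theta^{1/2}\int\widehat{\mathbb{E}}^{\theta,W,\eps}_{X^{1,x}}\rho^\eps(X(s)-y)\,\dif x-\sqrt\theta$. The constant part has zero derivative, and then \cref{eq:malliavinderiv} (with $J=1$ and $\mathscr{Q}[X]=\rho^\eps(X(s)-y)$) gives
\[
\Dif_{u,z}\Dif_{s,y}Y=\theta|\log\eps|^{-\frac12}\!\int\widehat{\mathbb{E}}^{\theta,W,\eps}_{X_1^{1,x},X_2^{1,x}}\!\rho^\eps(X_1(s)-y)\bigl[\rho^\eps(X_1(u)-z)-\rho^\eps(X_2(u)-z)\bigr]\dif x.
\]

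\textbf{Step 3: Fourier identity.} A change of variables combined with the evenness of $\rho$ gives
\[
\int\e_{k,\ell}(s,y)\rho^\eps(X(s)-y)\,\dif y=\widehat{\rho}(\eps k)\,\e_{k,\ell}(s,X(s)),
\]
where $\widehat{\rho}(\eps k)$ is the $k$th Fourier coefficient of $\rho^\eps$ on $\mathbf{T}^2$. Taking complex conjugates of the $(u,z)$-integral produces another factor of $\widehat{\rho}(\eps k)$ (since $\widehat{\rho}$ is real and even). Integrating in $s$ and $u$, the factors collapse into $|\widehat{\rho}(\eps k)|^2\mathscr{S}_{k,\ell}[X_i]\overline{\mathscr{S}_{k,\ell}[X_j]}$, so
\[
a^\eps_{\theta;k,\ell}=\theta|\widehat{\rho}(\eps k)|^2|\log\eps|^{-\frac12}\!\int\mathbf{E}\widehat{\mathbb{E}}^{\theta,W,\eps}_{X_1^{1,x},X_2^{1,x}}\!\bigl[|\mathscr{S}_{k,\ell}[X_1]|^2-\mathscr{S}_{k,\ell}[X_1]\overline{\mathscr{S}_{k,\ell}[X_2]}\bigr]\dif x.
\]

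\textbf{Step 4: Translation invariance.} Write $X_i=Y_i+x$ with $Y_i$ a backward Brownian path starting at $0$. The tilt $\mathscr{E}^\eps_{\theta,1}[W,X_i]$ transforms into $\mathscr{E}^\eps_{\theta,1}[W^{(x)},Y_i]$, where $W^{(x)}$ is the spatial translate of $W$ by $x$; since $W^{(x)}$ has the same law as $W$, the outer expectation $\mathbf{E}$ is unchanged. Bringing the $\dif x$ integral inside the tilted expectation produces exactly $\mathscr{A}_{k,\ell}[\mathbf{X}]$ as defined in \cref{eq:Axiexpr}, yielding \cref{eq:Aexpr}.

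The only delicate point is justifying the second Malliavin integration by parts and the interchange of derivatives with expectations and spatial integrations; these follow from the boundedness of $\mathscr{E}^\eps_{\theta,t}$ and its Malliavin derivatives for fixed $\eps>0$ (the functions involved are smooth in the noise), exactly as in the justifications already used in \cref{sec:derivatives} and \cref{lem:expcomp}.
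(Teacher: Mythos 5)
Your proof is correct and follows essentially the same route as the paper: two Gaussian integrations by parts against $\e_{k,\ell}$ and $\overline{\e_{k,\ell}}$, evaluation of the Malliavin derivatives via \cref{eq:MalliavinDerivofh} and \cref{eq:malliavinderiv}, the Fourier identity $\int\e_{k,\ell}(s,y)\rho^\eps(X(s)-y)\,\dif y=\widehat\rho(\eps k)\e_{k,\ell}(s,X(s))$, and translation invariance to move the $\dif x$-integral inside. The only cosmetic differences are that you apply both integrations by parts before computing derivatives and handle the $f^\eps_\theta$ vs.~$h^\eps_\theta$ distinction via $\Dif_{s,y}\widehat{v_\theta}(1,0)=\sqrt\theta$ rather than via the independence of $W_1[\e_{k,\ell}]$ and $\widehat{v_\theta}(1,0)$, and you make explicit the translation-invariance step that the paper leaves implicit.
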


\begin{proof}
We first note that since $k\ne0$, $\int\overline{\e_{k,\ell}(t,x)}\,\dif x=\int\e_{k,\ell}(t,x)\,\dif x=0$
for all $t$. This implies that the random variables $W_{1}[\e_{k,\ell}]$ and $\widehat{v_{\theta}}(1,0)$
are independent (recall the discussion surrounding \cref{eq:pseudocovariance}), and therefore $W_{1}[\e_{k,\ell}]\overline{W_{1}[\e_{k,\ell}]}-1$
and $\widehat{v_{\theta}}(1,0)$ are independent. By \cref{eq:hfv} and \cref{eq:hvexp0}
this means that 
\begin{align*}
a_{\theta;k,\ell}^{\eps}&=\mathbf{E}\biggl[(W_{1}[\e_{k,\ell}]\overline{W_{1}[\e_{k,\ell}]}-1)\int h_{\theta}^{\eps}(1,x)\,\dif x\biggr]\\
&= \mathbf{E}\biggl[W_{1}[\e_{k,\ell}]\overline{W_{1}[\e_{k,\ell}]}\int h_{\theta}^{\eps}(1,x)\,\dif x\biggr]\\
&= \mathbf{E}\biggl[\biggl(\int_0^1 \int \e_{k,\ell}(s,y) W(\dif y \,\dif s) \biggr)\biggl(\overline{W_{1}[\e_{k,\ell}]}\int h_{\theta}^{\eps}(1,x)\,\dif x\biggr)\biggr].
\end{align*}
By the Gaussian integration by parts formula \cref{eq:GIP}, this gives
\begin{align*}
a_{\theta;k,\ell}^{\eps}&=\mathbf{E}\biggl[\int_0^1\int \e_{k,\ell}(s,y) \Dif_{s,y}\biggl(\overline{W_{1}[\e_{k,\ell}]}\int h_{\theta}^{\eps}(1,x)\,\dif x\biggr)\,\dif y\,\dif s\biggr].
\end{align*}
By the product rule for the Malliavin derivative and \cref{eq:malldef},
\begin{align*}
&\Dif_{s,y}\biggl(\overline{W_{1}[\e_{k,\ell}]}\int h_{\theta}^{\eps}(1,x)\,\dif x\biggr)\\
&= (\Dif_{s,y}\overline{W_{1}[\e_{k,\ell}]}) \int h_{\theta}^{\eps}(1,x)\,\dif x + \overline{W_{1}[\e_{k,\ell}]} \int \Dif_{s,y} h_{\theta}^{\eps}(1,x)\,\dif x\\
&= \overline{e_{k,\ell}(s,y)}\int h_{\theta}^{\eps}(1,x)\,\dif x + \overline{W_{1}[\e_{k,\ell}]} \int \Dif_{s,y} h_{\theta}^{\eps}(1,x)\,\dif x.
\end{align*}
Therefore, again applying \cref{eq:hvexp0}, we get
\begin{align*}
a_{\theta;k,\ell}^{\eps}&=\mathbf{E} \biggl(\overline{W_{1}[\e_{k,\ell}]}\int_0^1\int\int \e_{k,\ell}(s,y) \Dif_{s,y}h_{\theta}^{\eps}(1,x)\,\dif x\,\dif y\,\dif s\biggr). 
\end{align*}
By the formula \cref{eq:MalliavinDerivofh} for $\Dif_{s,y}h_{\theta}^{\eps}(1,x)$, this shows that
\begin{align*}
a_{\theta;k,\ell}^{\eps} &= \theta^{\frac{1}{2}}\mathbf{E}\biggl[ \overline{W_{1}[\e_{k,\ell}]}\int\widehat{\mathbb{E}}_{X^{1,x}}^{\theta,W,\eps}\biggl(\int_0^1\int\rho^{\eps}(X(s)-y)\e_{k,\ell}(s,y)\,\dif y\,\dif s\biggr)\,\dif x\biggr].
\end{align*}
Since $\rho^\eps$ is an even function,
\begin{align}
&\int_0^1\int\rho^{\eps}(X(s)-y)\e_{k,\ell}(s,y)\,\dif y\,\dif s\notag \\
&= \int_0^1\int\rho^{\eps}(X(s)-y)e^{2\pi\ii(\ell s+k\cdot (y-X(s)))}e^{2\pi \ii k\cdot X(s)}\,\dif y\,\dif s\notag\\
&= \int_0^1 e^{2\pi\ii(\ell s+k\cdot X(s))}\biggl(\int\rho^{\eps}(X(s)-y)e^{2\pi\ii k\cdot (y-X(s))}\,\dif y\biggr)\,\dif s\notag\\
&= \int_0^1 e^{2\pi\ii(\ell s+k\cdot X(s))}\biggl(\int\rho^{\eps}(z)e^{2\pi\ii k\cdot z}\,\dif z\biggr)\,\dif s \notag\\
&= \widehat{\rho^\eps}(k) \mathscr{S}_{k,\ell}[X] = \widehat{\rho}(\eps k) \mathscr{S}_{k,\ell}[X].\label{eq:rhoform}
\end{align}
Combining all of the above, we get
\[
a_{\theta;k,\ell}^{\eps}
= \theta^{\frac{1}{2}}\widehat{\rho}(\eps k)\mathbf{E} \biggl(\overline{W_{1}[\e_{k,\ell}]}\int\widehat{\mathbb{E}}_{X^{1,x}}^{\theta,W,\eps}\mathscr{S}_{k,\ell}[X]\,\dif x\biggr).
\]
Now we can integrate by parts again, and use \cref{eq:malliavinderiv} and \cref{eq:rhoform},  to obtain
\begin{align*}
 a_{\theta;k,\ell}^{\eps} &= \theta^{\frac{1}{2}}\widehat{\rho}(\eps k)\mathbf{E}\biggl[ \int_0^1\int \overline{\e_{k,\ell}(s,y)}\biggl(\int\Dif_{s,y}\widehat{\mathbb{E}}_{X^{1,x}}^{\theta,W,\eps}\mathscr{S}_{k,\ell}[X]\,\dif x\biggr)\,\dif y\,\dif s\biggr]\notag\\
&= \theta\widehat{\rho}(\eps k)|\log\eps|^{-\frac{1}{2}}\mathbf{E}\biggl[ \int_0^1\int \overline{\e_{k,\ell}(s,y)}\biggl(\int\widehat{\mathbb{E}}_{X^{1,x},\widetilde{X}^{1,x}}^{\theta,W,\eps}\mathscr{S}_{k,\ell}[X]\notag\\&\qquad\qquad\qquad \qquad \qquad\cdot (\rho^\eps(X(s)-y)-\rho^\eps(\widetilde{X}(s)-y))\,\dif x\biggr)\,\dif y\,\dif s\biggr]\notag\\
&= \theta\widehat{\rho}(\eps k)|\log\eps|^{-\frac{1}{2}}\mathbf{E}\biggl[ \int\widehat{\mathbb{E}}_{X^{1,x},\widetilde{X}^{1,x}}^{\theta,W,\eps}\mathscr{S}_{k,\ell}[X](\overline{\mathscr{S}_{k,\ell}[X]}-\overline{\mathscr{S}_{k,\ell}[\widetilde{X}]})\,\dif x\biggr],
\end{align*}
which is \cref{eq:Aexpr}.
\end{proof}
Now define
\begin{equation}
\widetilde{a}_{\theta;k,\ell}^{\eps}=|\log\eps|^{-\frac{1}{2}}|\widehat{\rho}(\eps k)|^{2}\mathbf{E}\widehat{\mathbb{E}}_{\mathbf{X}^{1,\mathbf{x}}}^{\theta,W,\eps}\mathscr{A}_{k,\ell}[\mathbf{X}],\label{eq:Qtildedef}
\end{equation}
so that
\begin{equation}
a_{\theta;k,\ell}^{\eps}=\theta\widetilde{a}_{\theta;k,\ell}^{\eps}.\label{eq:QQtilde}
\end{equation}
We want to lower-bound $a_{\theta;k,\ell}^{\eps}$, which we will
achieve by lower bounding $\widetilde{a}_{0;k,\ell}^{\eps}$ and upper bounding
the derivative of $\widetilde{a}_{\theta;k,\ell}^{\eps}$ with respect to $\theta$.
Our tool for the latter purpose will be \cref{prop:simpleUB}. Thus
we first need to prove some estimates on the quantities involved in
\cref{eq:Qtildedef} with $\theta=0$, and on the terms involved in
\cref{eq:dAtildebound} with the choice $\mathscr{Q}=\mathscr{A}_{k,\ell}^{\eps}$.
\begin{lem}
\label{lem:Mp}Define 
\begin{equation}
M_{k,\ell;2p}\coloneqq\mathbb{E}_{X^{1,0}}\int|\mathscr{S}_{k,\ell}[X+x]|^{2p}\,\dif x.\label{eq:Mpdef}
\end{equation}
Then for any integer $p\ge1$ there is a constant $C_p$ such that for any $\ell$ and any $k\ne 0$, 
\begin{equation}
M_{k,\ell;2p}\le C_p|k|^{-2p}.\label{eq:upperbound}
\end{equation}
Moreover, there is an absolute constant $c>0$ such that if $k\ne 0$ and $|\ell|\le|k|^{2}$, then
\begin{equation}
M_{k,\ell;2}\ge c|k|^{-2}.\label{eq:secondmoment}
\end{equation}
\end{lem}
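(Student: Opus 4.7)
The plan is to exploit the simple observation that $\mathscr{S}_{k,\ell}[X+x]=e^{2\pi\mathrm{i} k\cdot x}\mathscr{S}_{k,\ell}[X]$ (since $k\in\mathbf{Z}^2$), which means $|\mathscr{S}_{k,\ell}[X+x]|^{2p}$ is independent of $x$, and so $M_{k,\ell;2p}=\mathbb{E}_{X^{1,0}}|\mathscr{S}_{k,\ell}[X]|^{2p}$. Because $k$ is an integer vector, the exponential $e^{2\pi\mathrm{i} k\cdot X(s)}$ is insensitive to whether we regard $X$ as living on $\mathbf{T}^2$ or on $\mathbf{R}^2$, so we may use the Brownian characteristic function on $\mathbf{R}^2$, namely $\mathbb{E}[e^{2\pi\mathrm{i} k\cdot(X(s)-X(u))}]=e^{-2\pi^2|k|^2|s-u|}$.

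For the upper bound~\cref{eq:upperbound}, I would expand $|\mathscr{S}_{k,\ell}[X]|^{2p}$ as a $2p$-fold integral and take expectations to obtain
\begin{equation*}
 M_{k,\ell;2p}=\int_{[0,1]^{2p}}e^{2\pi\mathrm{i}\ell\sum_j\sigma_jt_j}e^{-2\pi^2|k|^2\Sigma^2(\vec t,\sigma)}\,\dif \vec t,
\end{equation*}
where $\sigma\in\{\pm1\}^{2p}$ has $p$ pluses and $p$ minuses and $\Sigma^2$ is the variance of one coordinate of $\sum_j\sigma_jX(t_j)$. Sort the times as $t_{(1)}<\ldots<t_{(2p)}$, set $u_i=t_{(i)}-t_{(i-1)}$ and let $\tilde\sigma$ be the induced sign sequence; an Abel summation gives $\Sigma^2=\sum_{i=1}^{2p}T_i^2u_i$ with $T_i=\sum_{j\ge i}\tilde\sigma_j$. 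Since $T_{i+1}-T_i=-\tilde\sigma_i\in\{\pm1\}$ and $T_1=T_{2p+1}=0$, the $T_i$'s form a $\pm1$ walk returning to $0$, so at most $p+1$ of $T_1,\ldots,T_{2p+1}$ can equal $0$, hence at least $p$ of the $T_i^2$ are $\ge1$. Discarding the oscillatory factor, integrating the $p$ ``decaying'' coordinates over $(0,\infty)$ each gives a factor $(2\pi^2|k|^2)^{-1}$, and the remaining $u_i$'s are controlled by the volume of a simplex of side $1$. Summing over the $\binom{2p}{p}$ choices of $\tilde\sigma$ yields the $|k|^{-2p}$ bound with a $p$-dependent constant.

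For the lower bound~\cref{eq:secondmoment}, the $p=1$ case reduces by the change of variables $\tau=s-u$ to
\begin{equation*}
 M_{k,\ell;2}=2\,\mathrm{Re}\int_0^1(1-\tau)e^{\alpha\tau}\,\dif\tau,\qquad \alpha=-a+\mathrm{i}b,\;a=2\pi^2|k|^2,\;b=2\pi\ell,
\end{equation*}
which is an elementary integral equal to $-2\mathrm{Re}(1/\alpha)-2\mathrm{Re}(1/\alpha^2)+2\mathrm{Re}(e^\alpha/\alpha^2)$. The last term is $O(e^{-a}/(a^2+b^2))$ and completely negligible, while the first two combine to
\begin{equation*}
 \frac{2a^3+2ab^2-2a^2+2b^2}{(a^2+b^2)^2}=\frac{2a}{a^2+b^2}-\frac{2(a^2-b^2)}{(a^2+b^2)^2}.
\end{equation*}
The hypothesis $|\ell|\le|k|^2$ forces $b^2\le a^2/\pi^2$, so $a^2+b^2\le(1+\pi^{-2})a^2$, and hence the leading term $2a/(a^2+b^2)\ge c/a=c'|k|^{-2}$; the correction is $O(|k|^{-4})$ and thus absorbed for $|k|\ge1$.

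The main obstacle I anticipate is the bookkeeping in the ordered-increment step: one must correctly identify the $T_i$'s, verify the parity constraint that yields ``at least $p$ nonzero'' (an easy but crucial observation), and then combine the exponential-decay integrations with the simplex volume without losing track of $p$-dependent constants. The calculation itself is otherwise routine.
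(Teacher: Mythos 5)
Your proposal is correct and follows essentially the same route as the paper: expand $|\mathscr{S}_{k,\ell}[X]|^{2p}$ as a $2p$-fold time integral, evaluate the Gaussian expectation using $\mathbf{E}e^{2\pi\ii k\cdot(X(s)-X(u))}=e^{-2\pi^2|k|^2|s-u|}$, pass to ordered times so that the exponent becomes $-2\pi^2|k|^2\sum_i T_i^2 u_i$, use the parity of the $\pm1$ walk to see that at least $p$ of the partial sums $T_i$ are nonzero, and integrate the corresponding increments over $(0,\infty)$ while bounding the rest by the unit interval. Your up-front observation that $|\mathscr{S}_{k,\ell}[X+x]|^{2p}$ is independent of $x$ is a tidy streamlining (the paper carries the $x$-integral a bit longer before it trivializes), and your $p=1$ lower-bound reduction to $2\,\mathrm{Re}\int_0^1(1-\tau)e^{\alpha\tau}\,\dif\tau$ with $\alpha=-2\pi^2|k|^2+2\pi\ii\ell$ and the subsequent elementary estimate under $|\ell|\le|k|^2$ match the paper's computation up to an inconsequential constant.
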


We will only use \cref{eq:upperbound} in the case $p=2$. Since the
right-hand side of \cref{eq:Mpdef} can be evaluated explicitly (although
perhaps only a computer algebra system would have the patience), we
present a slightly long but ultimately straightforward computational
proof of \cref{lem:Mp} in \cref{subsec:BMcomps}. On the other hand,
it is easy to interpret the order of magnitude of the fluctuations
of $\mathscr{S}_{k,\ell}[X]$ probabilistically. The integral in the
definition of $\mathscr{S}_{k,\ell}[X]$ sums the values of a sinusoid
with frequency $|k|$ at the position of a Brownian motion, and it
takes the Brownian motion time $|k|^{2}$ to move a distance $|k|$,
so the integral is effectively averaging $|k|^{-2}$ i.i.d. random
variables. Hence the fluctuations of $\mathscr{S}_{k,\ell}[X]$ are
on the order $|k|^{-1}$.
\begin{lem}
For any $x\in \mathbf{T}^2$, any $k\ne 0$ and any $\ell$, 
\begin{equation}
|\mathbb{E}_{X^{1,0}}\mathscr{S}_{k,\ell}[X]|\le2|k|^{-2}.\label{eq:explowerbound}
\end{equation}
\end{lem}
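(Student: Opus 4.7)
My plan is to compute $\mathbb{E}_{X^{1,0}}\mathscr{S}_{k,\ell}[X]$ essentially explicitly. By Fubini's theorem (justified because the integrand has modulus $1$),
\[
\mathbb{E}_{X^{1,0}}\mathscr{S}_{k,\ell}[X] = \int_0^1 e^{2\pi\ii\ell s}\,\mathbb{E}_{X^{1,0}}e^{2\pi\ii k\cdot X(s)}\,\dif s.
\]
Since $X$ is a Brownian motion on $\mathbf{T}^2$ running backwards from $X(1)=0$, the marginal $X(s)$ has the same law as a standard Brownian motion on $\mathbf{T}^2$ at time $1-s$ started from $0$. As $k\in\mathbf{Z}^2$, the exponential $e^{2\pi\ii k\cdot\cdot}$ descends to $\mathbf{T}^2$, so I can lift to $\mathbf{R}^2$ and apply the standard Gaussian characteristic function to obtain
\[
\mathbb{E}_{X^{1,0}}e^{2\pi\ii k\cdot X(s)} = e^{-2\pi^2|k|^2(1-s)}.
\]

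Substituting and using that $\ell\in\mathbf{Z}$ (so $e^{2\pi\ii\ell}=1$), the resulting integral evaluates explicitly:
\[
\mathbb{E}_{X^{1,0}}\mathscr{S}_{k,\ell}[X] = e^{-2\pi^2|k|^2}\int_0^1 e^{(2\pi\ii\ell+2\pi^2|k|^2)s}\,\dif s = \frac{1-e^{-2\pi^2|k|^2}}{2\pi^2|k|^2+2\pi\ii\ell}.
\]

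Taking absolute values, the numerator is bounded by $1$ and the modulus of the denominator is at least $2\pi^2|k|^2$, so
\[
\left|\mathbb{E}_{X^{1,0}}\mathscr{S}_{k,\ell}[X]\right| \le \frac{1}{2\pi^2|k|^2} \le \frac{2}{|k|^2},
\]
giving \cref{eq:explowerbound}. There is no genuine obstacle here; the only point requiring mild care is justifying the characteristic function computation on the torus (which amounts to noting that the periodization of the Gaussian kernel integrates against the character $e^{2\pi\ii k\cdot x}$ to give exactly the same value as on $\mathbf{R}^2$).
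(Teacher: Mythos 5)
Your proof is correct and takes essentially the same approach as the paper: interchange expectation and the time integral, compute $\mathbb{E}\,e^{2\pi\ii k\cdot X(s)}$ via the Gaussian characteristic function (lifting to $\mathbf{R}^2$ using $k\in\mathbf{Z}^2$), evaluate the resulting elementary integral, and bound it by the modulus of the numerator over the real part of the denominator. The only divergence is that you correctly use variance $1-s$ for the backward motion at time $s$ whereas the paper's displayed formula uses $s$; since $\ell\in\mathbf{Z}$ the two integrals are complex conjugates, so the bound is unaffected either way.
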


\begin{proof}
Suppose that $B$ is a Brownian motion on $\mathbf{R}^2$ started from the origin at time $t$ and flowing backwards in time. Let $X$ be the projection of  $B$ on to the torus $\mathbf{T}^2$, so that $X$ is a Brownian motion on the torus. Then for any $s\le t$, $X(s) - B(s) \in \mathbf{Z}^2$, and therefore for any $s\le t$ and any $k\in \mathbf{Z}^2$,  $e^{2\pi \ii k\cdot X(s)} = e^{2\pi \ii k \cdot B(s)}$. We will use this fact in this proof and also later. One immediate consequence is that 
\begin{equation}\label{eq:brownian}
\mathbf{E}(e^{2\pi \ii k\cdot X(s)}) = \mathbf{E}(e^{2\pi \ii k \cdot B(s)}) = e^{-2\pi^2 |k|^2s}. 
\end{equation}
Using this, we compute
\begin{align*}
&\mathbb{E}_{X^{1,0}}\int_{0}^{1}\e_{k,\ell}(s,X(s)+x)\,\dif s\notag\\
&=\int_{0}^{1}\mathbb{E}_{X^{1,0}}\exp\{2\pi\ii(\ell s+k\cdot X(s)+k\cdot x)\}\,\dif s\notag\\
&=\int_{0}^{1}\exp\{2\pi\ii(\ell s+k\cdot x)-2\pi^{2}|k|^{2}s\}\,\dif s\\
&=\e^{2\pi\ii k\cdot x}\frac{1-\e^{2\pi\ii\ell-2\pi|k|^{2}s}}{-2\pi\ii\ell+2\pi^{2}|k|^{2}}. 
\end{align*}
Since the absolute value of the numerator is clearly bounded by $2$, and $|a+\ii b|\ge |a|$ for any $a,b\in \mathbf{R}$, this proves \cref{eq:explowerbound}.
\end{proof}
\begin{lem}\label{lem:abound}
There is a $\theta_{2}>0$ and constants $c,k_{0}>0$ so that, if
$0\le\theta\le\theta_{2}$, $|k|\ge k_{0}$, and $\ell\le|k|^{2}$,
then
\begin{equation}
a_{\theta;k,\ell}^{\eps}\ge c\theta|\log\eps|^{-\frac{1}{2}}|\widehat{\rho}(\eps k)|^{2}|k|^{-2}.\label{eq:Alowerbound}
\end{equation}
\end{lem}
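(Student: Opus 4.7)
The plan is to exploit the identity $a^\eps_{\theta;k,\ell}=\theta\widetilde{a}^\eps_{\theta;k,\ell}$ from \cref{eq:QQtilde} together with a first-order Taylor argument in $\theta$: first I would establish a clean lower bound on $\widetilde{a}^\eps_{0;k,\ell}$ of order $|\log\eps|^{-1/2}|\widehat{\rho}(\eps k)|^2|k|^{-2}$, then use \cref{prop:simpleUB} to bound the $\theta$-derivative of $\widetilde{a}^\eps_{\theta;k,\ell}$ by the same order, and finally absorb the derivative term by choosing $\theta_2$ small.

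At $\theta=0$ the tilted measure reduces to the Wiener measure, so
\[
\widetilde{a}^\eps_{0;k,\ell}=|\log\eps|^{-\frac{1}{2}}|\widehat{\rho}(\eps k)|^2\int\left(\mathbb{E}_{X_1^{1,0}}|\mathscr{S}_{k,\ell}[X_1+x]|^2-\left|\mathbb{E}_{X^{1,0}}\mathscr{S}_{k,\ell}[X+x]\right|^2\right)\dif x,
\]
since $X_1,X_2$ are independent under $\mathbb{P}_{\mathbf{X}^{1,\mathbf{0}}}$. The first term equals $M_{k,\ell;2}\ge c|k|^{-2}$ by \cref{eq:secondmoment}. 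For the second, the explicit computation in the proof of \cref{eq:explowerbound} yields $|\mathbb{E}_{X^{1,0}}\mathscr{S}_{k,\ell}[X+x]|\le(\pi^2|k|^2)^{-1}$, so the second term is $O(|k|^{-4})$. Hence, for $|k|\ge k_0$ large enough, $\widetilde{a}^\eps_{0;k,\ell}\ge \tfrac{c}{2}|\log\eps|^{-1/2}|\widehat{\rho}(\eps k)|^2|k|^{-2}$.

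To control the $\theta$-derivative I would apply \cref{prop:simpleUB} with $J=2$, $t=1$, $\mathbf{x}=\mathbf{0}$, and $\mathscr{Q}=\mathscr{A}_{k,\ell}$ (which is bounded since $|\mathscr{S}_{k,\ell}[\cdot]|\le 1$). This gives
\[
\left|\tfrac{\dif}{\dif\theta}\widetilde{a}^\eps_{\theta;k,\ell}\right|\le C|\log\eps|^{-\frac{1}{2}}|\widehat{\rho}(\eps k)|^2\bigl(\mathbb{E}_{\mathbf{X}^{1,\mathbf{0}}}|\mathscr{A}_{k,\ell}[\mathbf{X}]|^2\bigr)^{1/2}.
\]
I would bound $\mathbb{E}|\mathscr{A}_{k,\ell}|^2\lesssim |k|^{-4}$ by splitting $\mathscr{A}_{k,\ell}$ into its diagonal and cross terms, applying Cauchy--Schwarz in $x$ to control the diagonal piece by $\int|\mathscr{S}_{k,\ell}[X_1+x]|^4\dif x$ whose expectation is $M_{k,\ell;4}\le C|k|^{-4}$ via the $p=2$ case of \cref{eq:upperbound}, and using the independence of $X_1,X_2$ together with $M_{k,\ell;2}^2\le C|k|^{-4}$ for the cross piece. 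Consequently $|\tfrac{\dif}{\dif\theta}\widetilde{a}^\eps_{\theta;k,\ell}|\le C'|\log\eps|^{-1/2}|\widehat{\rho}(\eps k)|^2|k|^{-2}$.

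Combining, for $\theta\le\theta_2$ sufficiently small,
\[
\widetilde{a}^\eps_{\theta;k,\ell}\ge \widetilde{a}^\eps_{0;k,\ell}-\theta\sup_{\theta'\in[0,\theta]}\left|\tfrac{\dif}{\dif\theta'}\widetilde{a}^\eps_{\theta';k,\ell}\right|\ge \tfrac{c}{4}|\log\eps|^{-\frac{1}{2}}|\widehat{\rho}(\eps k)|^2|k|^{-2},
\]
and multiplying by $\theta$ yields \cref{eq:Alowerbound}. The main obstacle is the lower bound at $\theta=0$: one must verify that the ``diagonal'' contribution of order $|k|^{-2}$ genuinely dominates the ``cross'' contribution, which is only possible once $|k|$ is taken large enough, hence the appearance of the threshold $k_0$ in the statement.
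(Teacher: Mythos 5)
Your proposal is correct and follows essentially the same route as the paper: lower-bound $\widetilde{a}^\eps_{0;k,\ell}$ via $M_{k,\ell;2}$ (from \cref{eq:secondmoment}) minus the $O(|k|^{-4})$ contribution of $|\mathbb{E}\mathscr{S}_{k,\ell}|^2$ (from \cref{eq:explowerbound}), then control $|\partial_\theta\widetilde{a}^\eps_{\theta;k,\ell}|$ by applying \cref{prop:simpleUB} to $\mathscr{Q}=\mathscr{A}_{k,\ell}$ together with an $M_{k,\ell;4}\le C|k|^{-4}$ bound (from \cref{eq:upperbound}), and absorb by taking $\theta$ small. The only cosmetic difference is that you split $\mathscr{A}_{k,\ell}$ into diagonal and cross pieces before applying Cauchy--Schwarz, whereas the paper applies Cauchy--Schwarz to the full expression in one shot; both yield the same $|k|^{-4}$ bound.
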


\begin{proof}
By the mean value theorem, we have
\[
\widetilde{a}_{\theta;k,\ell}^{\eps}\ge\widetilde{a}_{0;k,\ell}^{\eps}-\theta\max_{\theta'\in[0,\theta]}\left|\frac{\partial}{\partial\theta}\widetilde{a}_{\theta;k,\ell}^{\eps}\right|.
\]
Now,
\begin{align*}
&\mathbb{E}_{\mathbf{X}^{1,\mathbf{0}}}\mathscr{A}_{k,\ell}[\mathbf{X}] \\
&= \int\mathbb{E}_{\mathbf{X}^{1,\mathbf{0}}}\left(|\mathscr{S}_{k,\ell}[X_{1}+x]|^{2}-\mathscr{S}_{k, \ell}[X_{1}+x]\overline{\mathscr{S}_{k,\ell}[X_{2}+x]}\right)\,\dif x\\
&= M_{k,\ell; 2} - \int (\mathbb{E}_{X^{1,0}}\mathscr{S}_{k,\ell}[X])^2 \, \dif x.
\end{align*}
Therefore it follows from \cref{eq:secondmoment} and \cref{eq:explowerbound} that
there is some $k_{0}>0$ and some $c>0$ so that, as long as $|k|\ge k_{0}$ and $\ell\le|k|^{2}$,
we have
\[
\mathbb{E}_{\mathbf{X}^{1,\mathbf{0}}}\mathscr{A}_{k,\ell}[\mathbf{X}]\ge c|k|^{-2}.
\]
Thus (recalling \cref{eq:Qtildedef}), we have that
\[
\widetilde{a}_{0;k,\ell}^{\eps}\ge c|\log\eps|^{-\frac{1}{2}}|\widehat{\rho}(\eps k)|^{2}|k|^{-2}.
\]
Moreover, we can use the Cauchy--Schwarz inequality on \cref{eq:Axiexpr}
and then apply \cref{eq:upperbound} to write
\[
\mathbb{E}_{\mathbf{X}^{1,\mathbf{0}}}\mathscr{A}_{k,\ell}[\mathbf{X}]^{2}\le4\mathbb{E}_{X^{1,0}}\int|\mathscr{S}_{k,\ell}[X_{1}+x]|^{4}\,\dif x\le C|k|^{-4}.
\]
This means that, by \cref{eq:dAtildebound}, we have, as long as $\theta<\theta_{0}$ (where $\theta_{0}$ is as in \cref{prop:simpleUB}),
\begin{align*}
\max_{\theta'\in[0,\theta]}\left|\frac{\partial}{\partial\theta}\widetilde{a}_{\theta;k,\ell}^{\eps}\right|&\le C|\log\eps|^{-\frac{1}{2}}|\widehat{\rho}(\eps k)|^{2}\left(\mathbb{E}_{\mathbf{X}^{t,\mathbf{x}}}\mathscr{A}_{k,\ell}[\mathbf{X}]^{2}\right)^{1/2}\\
&\le C|\log\eps|^{-\frac{1}{2}}|\widehat{\rho}(\eps k)|^{2}|k|^{-2}.
\end{align*}
Therefore, we have
\[
\widetilde{a}_{\theta;k,\ell}^{\eps}\ge(c-C\theta)|\log\eps|^{-\frac{1}{2}}|k|^{-2}|\widehat{\rho}(\eps k)|^{2}.
\]
So as long as $\theta<\frac{c}{2C}$, we have
\[
\widetilde{a}_{\theta;k,\ell}^{\eps}\ge\frac{c}{2}|\log\eps|^{-\frac{1}{2}}|\widehat{\rho}(\eps k)|^{2}|k|^{-2},
\]
which implies \cref{eq:Alowerbound} in light of \cref{eq:QQtilde}.
\end{proof}
\begin{proof}[Proof of Theorem \ref{thm:fdoesntgoto0}.]
Define 
\[
\mathbf{Z}_{+}^{2}=\{k=(k_{1},k_{2})\in\mathbf{Z}^{2}: k_{1}>0\text{ or }(k_{1}=0\text{ and }k_{2}>0)\},
\]
so that if $k\ne k'\in\mathbf{Z}_{+}^{2}$ then $k\not\in\{k',-k'\}$, and so
\[
\int_{0}^{1}\int\e_{k,\ell}(t,x)\e_{k',\ell'}(t,x)\,\dif x\,\dif t=0
\]
and
\[
\int_{0}^{1}\int\e_{k,\ell}(t,x)\overline{\e_{k',\ell'}(t,x)}\,\dif x\,\dif t=\int_{0}^{1}\int\e_{k,\ell}(t,x)\e_{-k',-\ell'}(t,x)\,\dif x\,\dif t=0.
\]
By the discussion surrounding \cref{eq:pseudocovariance}, this  means that the random variables $\{W_{1}[\e_{k,\ell}]: k\in\mathbf{Z}_{+}^{2},\ell\in\mathbf{N}\}$
are independent. Therefore, the set 
\[
\{ W_{1}[\e_{k,\ell}]\overline{W_{1}[\e_{k,\ell}]}-1\}_{k\in\mathbf{Z}_{+}^{2},\ell\in\mathbf{N}}
\]
is an $L^2$-orthogonal collection of complex random variables. It is easy to verify that these variables have $L^2$ norm $1$, and therefore this set is actually orthonormal. Let $k_0$ be as in \cref{lem:abound}, and define
\begin{equation}
A_{\theta}^{\eps}=\sum_{\substack{k\in\mathbf{Z}_{+}^{2}\\
|k|\ge k_{0}
}
}\sum_{\ell=0}^{|k|^{2}-1}a_{\theta;k,\ell}^{\eps}(W_{1}[\e_{k,\ell}]\overline{W_{1}[\e_{k,\ell}]}-1),\label{eq:Aepsthetadef}
\end{equation}
\[
B_{\theta}^{\eps}=\int f_{\theta}^{\eps}(1,x)\,\dif x-A_{\theta}^{\eps},
\]
and
\[
E_{\theta}^{\eps}=\int h_{\theta}^{\eps}(1,x)\,\dif x-A_{\theta}^{\eps}.
\]
Then, by the orthonormality of $\{ W_{1}[\e_{k,\ell}]\overline{W_{1}[\e_{k,\ell}]}-1\}$, along with
 \cref{eq:Alowerbound}, we have
\begin{align*}
\mathbf{E}[(A_{\theta}^{\eps})^{2}]=\sum_{\substack{k\in\mathbf{Z}_{+}^{2}\\
|k|\ge k_{0}
}
}\sum_{\ell=0}^{|k|^{2}-1}(a_{\theta;k,\ell}^{\eps})^{2} & \ge c\sum_{\substack{k\in\mathbf{Z}_{+}^{2}\\
|k|\ge k_{0}
}
}\sum_{\ell=0}^{|k|^{2}-1}|\log\eps|^{-1}\theta^{2}|k|^{-4}|\widehat{\rho}(\eps k)|^{4}\\
 & =c\theta^{2}|\log\eps|^{-1}\sum_{\substack{k\in\mathbf{Z}_{+}^{2}\\
|k|\ge k_{0}
}
}|k|^{-2}|\widehat{\rho}(\eps k)|^{4}.
\end{align*}
Now, there is a $\delta>0$ so that $|\widehat{\rho}(\xi)|^{4}\ge|\widehat{\rho}(0)|^{4}/2$
whenever $|\xi|\le\delta$, so we have, as long as $\eps<\frac{\delta}{2k_{0}}$,
\begin{align*}
\mathbf{E}(A_{\theta}^{\eps})^{2}&\ge c\theta^{2}|\log\eps|^{-1}\sum_{\substack{k\in\mathbf{Z}_{+}^{2}\\
k_{0}\le|k|\le\delta/\eps
}
}|k|^{-2}|\widehat{\rho}(\eps k)|^{4}\\
&\ge c\theta^{2}|\log\eps|^{-1}\frac{|\widehat{\rho}(0)|^{4}}{2}\sum_{\substack{k\in\mathbf{Z}_{+}^{2}\\
k_{0}\le|k|\le\delta/\eps
}
}|k|^{-2}\ge c'\theta^{2}
\end{align*}
for some constant $c'>0$ depending on $\rho$. Directly from \cref{eq:Aepsthetadef} we have have $\mathbf{E}A_{\theta}^{\eps}=0.$
Furthermore, it is easy to see by \cref{eq:hvexp0} and \cref{eq:adef} that $A_{\theta}^{\eps}$ is an orthogonal projection
of $\int f_{\theta}^{\eps}(1,x)\,\dif x$ and also of $\int h_{\theta}^{\eps}(1,x)\,\dif x$ on to the $L^2$-subspace spanned by 
\[
\{ W_{1}[\e_{k,\ell}]\overline{W_{1}[\e_{k,\ell}]}-1: k\in\mathbf{Z}^2_+, \, |k|\ge k_0,\, 0\le \ell\le |k|^2-1\}.
\]
Thus, we have
\[
\mathbf{E}A_{\theta}^{\eps}B_{\theta}^{\eps}=\mathbf{E}A_{\theta}^{\eps}E_{\theta}^{\eps}=0.
\]
Recall that $A_{\theta}^{\eps}$ is a sum of squares of Gaussian random
variables, minus their expectation. (That is, it is a homogeneous element of the second
Wiener chaos.) A well-known fact about the sums of squares of Gaussian random variables (which is a special case of Gaussian hypercontractivity; see e.g. \cite[Theorem 3.50]{Jan97}) is that their higher central moments are all controlled by their variance. More precisely, for any $p>2$, there is a $C_{p}<\infty$ so that
\begin{align*}
(\mathbf{E}|A_{\theta}^{\eps}|^{p})^{1/p}&\le C_{p}(\mathbf{E}|A_{\theta}^{\eps}|^{2})^{1/2}\le C_{p}\biggl[\mathbf{E}\biggl(\int f_{\theta}^{\eps}(1,x)\,\dif x\biggr)^{2}\biggr]^{1/2}\\
&\le C_{p}\biggl[\mathbf{E}\biggl(\int h_{\theta}^{\eps}(1,x)\,\dif x\biggr)^{2}\biggr]^{1/2}\le CC_{p},
\end{align*}
where the last inequality is by \cref{eq:final-varbound-1}. Also by
\cref{eq:final-varbound-1}, we have that
\[
\mathbf{E}|B_{\theta}^{\eps}|^{2}\le\mathbf{E}\left(\int f_{\theta}^{\eps}(1,x)\,\dif x\right)^{2}\le\mathbf{E}\left(\int h_{\theta}^{\eps}(1,x)\,\dif x\right)^{2}\le C,
\]
and that
\[
\mathbf{E}|E_{\theta}^{\eps}|^{2}\le\mathbf{E}\left(\int h_{\theta}^{\eps}(1,x)\,\dif x\right)^{2}\le C.
\]
Therefore, the hypotheses of \cref{lem:nontrivialitycondition} are
satisfied with $A=A^\eps_\theta$, $B=B^\eps_\theta$ and also with $A=A^\eps_\theta$, $B=E^\eps_\theta$. Thus, neither $\int f_{\theta}^{\eps}(1,x)\,\dif x$ nor $\int h_{\theta}^{\eps}(1,x)\,\dif x$ can converge
to $0$ in probability along any subsequence.
\end{proof}

\section{Technical proofs\label{sec:techproofs}}
In this section we prove the technical lemmas which have been stated without proof earlier. 
\subsection{Derivative computations\label{subsec:derivcomps}}
Here  we give the proofs of the lemmas from \cref{sec:derivatives}.
\begin{proof}[Proof of Lemma \ref{lem:Pderivs}.]
First we prove \cref{eq:ddtheta}. For simplicity of notation, we will use the following abbreviations throughout this proof:
\[
\mathbb{E} = \mathbb{E}_{\mathbf{X}^{t,\mathbf{x}}}, \ \ \ \ \  \widehat{\mathbb{E}} = \widehat{\mathbb{E}}_{\mathbf{X}^{t,\mathbf{x}}, \widetilde{\mathbf{X}}^{t,\mathbf{x}}}^{\theta, W, \eps}, \ \ \ \ \ \mathscrbf{E}[W,\mathbf{X}] = \mathscrbf{E}_{\theta,t}^{\eps}[W,\mathbf{X}].
\] (We will sometimes use $\widehat{\mathbb{E}}$ on an expression in which only $\mathbf{X}$ appears, in which case it will be the same as if we had defined $\widehat{\mathbb{E}} = \widehat{\mathbb{E}}_{\mathbf{X}^{t,\mathbf{x}}}^{\theta, W, \eps}$.)
Let
\[
\mathscr{R}[W,\mathbf{X}] = \frac{1}{2}(\theta|\log\eps|)^{-\frac{1}{2}}\sum_{k=1}^{J}\int_{0}^{t}\int \rho^{\eps}(X_{k}(s)-y)W(\dif y\,\dif s).
\]
It is an immediate consequence of
\cref{eq:dbetaFunnyE} that
\begin{align}
\frac{\partial}{\partial\theta}\mathscrbf{E}[W,\mathbf{X}]&=\biggl(\mathscr{R}[W,\mathbf{X}] -J\frac{\partial\kappa_{\theta}^{\eps}}{\partial\theta}(t)\biggr)\mathscrbf{E}[W,\mathbf{X}].\label{eq:dbetaEE}
\end{align}
We then compute, using \cref{eq:dbetaEE}:
\[
\frac{\partial}{\partial\theta}\mathbb{E}(  \mathscr{Q}[\mathbf{X}]\mathscrbf{E}[W,\mathbf{X}]) =\mathbb{E}\biggl[\mathscr{Q}[\mathbf{X}]\biggl(\mathscr{R}[W,\mathbf{X}] -J\frac{\partial\kappa_{\theta}^{\eps}}{\partial\theta}(t)\biggr)\mathscrbf{E}[W,\mathbf{X}]\biggr].
\]
This shows that
\begin{align*}
\frac{\frac{\partial}{\partial\theta}\mathbb{E}(  \mathscr{Q}[\mathbf{X}]\mathscrbf{E}[W,\mathbf{X}])}{\mathbb{E}\mathscrbf{E}[W,\mathbf{X}]} &= \widehat{\mathbb{E}}\biggl[\mathscr{Q}[\mathbf{X}]\biggl(\mathscr{R}[W,\mathbf{X}] -J\frac{\partial\kappa_{\theta}^{\eps}}{\partial\theta}(t)\biggr)\biggr].
\end{align*}
Thus,
\begin{align*}
&\frac{\partial}{\partial\theta}\widehat{\mathbb{E}}\mathscr{Q}[\mathbf{X}] = \frac{\frac{\partial}{\partial\theta}\mathbb{E}(  \mathscr{Q}[\mathbf{X}]\mathscrbf{E}[W,\mathbf{X}])}{\mathbb{E}\mathscrbf{E}[W,\mathbf{X}]} - \frac{\mathbb{E}(  \mathscr{Q}[\mathbf{X}]\mathscrbf{E}[W,\mathbf{X}])}{\mathbb{E}\mathscrbf{E}[W,\mathbf{X}]}\frac{\frac{\partial}{\partial\theta}\mathbb{E}( \mathscrbf{E}[W,\mathbf{X}])}{\mathbb{E}\mathscrbf{E}[W,\mathbf{X}]}\\
&= \widehat{\mathbb{E}}\biggl[\mathscr{Q}[\mathbf{X}]\biggl(\mathscr{R}[W,\mathbf{X}] -J\frac{\partial\kappa_{\theta}^{\eps}}{\partial\theta}(t)\biggr)\biggr]  - \widehat{\mathbb{E}}(\mathscr{Q}[\mathbf{X}])\widehat{\mathbb{E}}\biggl(\mathscr{R}[W,\mathbf{X}] -J\frac{\partial\kappa_{\theta}^{\eps}}{\partial\theta}(t)\biggr)\\
&= \widehat{\mathbb{E}}[\mathscr{Q}[\mathbf{X}](\mathscr{R}[W,\mathbf{X}] -\mathscr{R}[W,\widetilde{\mathbf{X}}])].
\end{align*}
This completes the proof of \cref{eq:ddtheta}. The proof of \cref{eq:malliavinderiv-noE} is similar. Using \cref{eq:detaE}, we compute
\[
\Dif_{s,y}\mathscrbf{E}[W,\mathbf{X}]=\theta^{\frac{1}{2}}|\log\eps|^{-\frac{1}{2}}\sum_{k=1}^{J}\mathscr{Q}[\mathbf{X}]\rho^{\eps}(X_{k}(s)-y)\mathscrbf{E}[W,\mathbf{X}].
\]
The quotient rule then gives us
\begin{multline*}
\Dif_{s,y}\frac{\mathscrbf{E}[W,\mathbf{X}]}{\mathbb{E}\mathscrbf{E}[W,\mathbf{X}]}=\theta^{\frac{1}{2}}|\log\eps|^{-\frac{1}{2}}\sum_{k=1}^{J}\frac{\rho^{\eps}(X_{k}(s)-y)\mathscrbf{E}[W,\mathbf{X}]}{\mathbb{E}\mathscrbf{E}[W,\mathbf{X}]}\\
-\theta^{\frac{1}{2}}|\log\eps|^{-\frac{1}{2}}\sum_{k=1}^{J}\frac{\mathscrbf{E}[W,\mathbf{X}]\mathbb{E}(\rho^{\eps}(X_{k}(s)-y)\mathscrbf{E}[W,\mathbf{X}])}{(\mathbb{E}\mathscrbf{E}[W,\mathbf{X}])^{2}},
\end{multline*}
which is equation \cref{eq:malliavinderiv-noE}. Multiplying \cref{eq:malliavinderiv-noE}
by $\mathscr{Q}[\mathbf{X}]$ and taking the expectation yields \cref{eq:malliavinderiv}.
\end{proof}
\begin{proof}[Proof of Lemma \ref{lem:fullexpexpansion}.]
Define 
\[
\mathscr{H}_{k,s,y}[\mathbf{X}]=\rho^{\eps}(X_{k}(s)-y).
\]
Let $\mathbf{X}_{*}=(\mathbf{X},\widetilde{\text{\textbf{X}}},\widetilde{\widetilde{\mathbf{X}}})$
and $\mathbf{x}_{*}=(\mathbf{x},\mathbf{x},\mathbf{x})$. As in the proof of \cref{lem:Pderivs}, we abbreviate for the sake of convenience
\[
 \widehat{\mathbb{E}} = \widehat{\mathbb{E}}_{\mathbf{X}_*^{t,\mathbf{x}_*}}^{\theta, W, \eps},
\]
which, again as in the proof of \cref{lem:Pderivs}, reduces to $\widehat{\mathbb{E}} = \widehat{\mathbb{E}}_{\mathbf{X}^{t,\mathbf{x}}}^{\theta, W, \eps}$ in contexts in which only $\mathbf{X}$ appears.
Using \cref{lem:Pderivs}, we get
\begin{align*}
\frac{\partial}{\partial\theta}&\mathbf{E} \widehat{\mathbb{E}}\mathscr{Q}[\mathbf{X}]= \frac{1}{2}(\theta|\log\eps|)^{-\frac{1}{2}}\mathbf{E}\biggl[\int_{0}^{t}\int\sum_{k=1}^{J}\widehat{\mathbb{E}}(\mathscr{Q}[\mathbf{X}](\mathscr{H}_{k,s,y}[\mathbf{X}]\\
&\qquad \qquad \qquad \qquad \qquad \qquad -\mathscr{H}_{k,s,y}[\widetilde{\mathbf{X}}]))W(\dif y\,\dif s)\biggr]\\
&= \frac{1}{2}(\theta|\log\eps|)^{-\frac{1}{2}}\mathbb{E}_{\mathbf{X}^{t,\mathbf{x}}}\mathscr{Q}[\mathbf{X}]\mathbf{E}\biggl[\frac{\mathscrbf{E}_{\theta,t}^{\eps}[W,\mathbf{X}]}{\mathbb{E}_{\mathbf{X}^{t,\mathbf{x}}}\mathscrbf{E}_{\theta,t}^{\eps}[W,\mathbf{X}]}\int_{0}^{t}\int\sum_{k=1}^{J}(\mathscr{H}_{k,s,y}[\mathbf{X}]\\
&\qquad \qquad \qquad \qquad \qquad \qquad -\mathscr{H}_{k,s,y}[\widetilde{\mathbf{X}}])W(\dif y\,\dif s)\biggr]
\end{align*}
By the Gaussian integration by parts formula \cref{eq:GIP}, the above expression equals
\begin{align*}
\mathbf{E}\biggl[\int_{0}^{t}\int \sum_{k=1}^{J}\Dif_{s,y}\widehat{\mathbb{E}}(\mathscr{Q}[\mathbf{X}](\mathscr{H}_{k,s,y}[\mathbf{X}]-\mathscr{H}_{k,s,y}[\widetilde{\mathbf{X}}]))\,\dif y\,\dif s\biggr].
\end{align*}
It is not difficult to see by \cref{eq:malliavinderiv} and symmetry considerations that 
\begin{align*}
&\Dif_{s,y}\widehat{\mathbb{E}}(\mathscr{Q}[\mathbf{X}](\mathscr{H}_{k,s,y}^{\eps}[\mathbf{X}]-\mathscr{H}_{k,s,y}^{\eps}[\widetilde{\mathbf{X}}]))\\
&= \theta^{\frac{1}{2}}|\log\eps|^{-\frac{1}{2}}\sum_{\ell=1}^{J}\widehat{\mathbb{E}}(\mathscr{Q}[\mathbf{X}] A_{k,\ell}(s,y)),
\end{align*}
where
\begin{align*}
A_{k,\ell}(s,y) &= (\mathscr{H}_{k,s,y}[\mathbf{X}]-\mathscr{H}_{k,s,y}[\widetilde{\mathbf{X}}])(\mathscr{H}_{\ell,s,y}[\mathbf{X}]-\mathscr{H}_{\ell,s,y}[\widetilde{\widetilde{\mathbf{X}}}])\\
 & \qquad\qquad+(\mathscr{H}_{k,s,y}[\mathbf{X}]-\mathscr{H}_{k,s,y}[\widetilde{\mathbf{X}}])(\mathscr{H}_{\ell,s,y}[\widetilde{\mathbf{X}}]-\mathscr{H}_{\ell,s,y}[\widetilde{\widetilde{\mathbf{X}}}]).
\end{align*}
By \cref{eq:itform}, we get
\begin{align*}
&\int_0^t \int A_{k,\ell}(s,y)\, \dif y\, \dif s\\
&= \mathscr{I}_{t}^{\eps}[X_{k},X_{\ell}]-\mathscr{I}_{t}^{\eps}[\widetilde{X}_{k},X_{\ell}]-\mathscr{I}_{t}^{\eps}[X_{k},\widetilde{\widetilde{X}}_{\ell}]+\mathscr{I}_{t}^{\eps}[\widetilde{X}_{k},\widetilde{\widetilde{X}}_{\ell}]\\
 & \qquad+\mathscr{I}_{t}^{\eps}[X_{k},\widetilde{X}_{\ell}]-\mathscr{I}_{t}^{\eps}[\widetilde{X}_{k},\widetilde{X}_{\ell}]-\mathscr{I}_{t}^{\eps}[X_k,\widetilde{\widetilde{X}}_{\ell}]+\mathscr{I}_{t}^{\eps}[\widetilde{X}_{k},\widetilde{\widetilde{X}}_{\ell}].
 \end{align*}
By symmetry,
\begin{align*}
\mathbf{E}(\mathscr{Q}[\mathbf{X}] \mathscr{I}_{t}^{\eps}[X_{k},\widetilde{X}_{\ell}]) &= \mathbf{E}(\mathscr{Q}[\mathbf{X}] \mathscr{I}_{t}^{\eps}[X_{k},\widetilde{\widetilde{X}}_{\ell}]).
\end{align*}
With this simplification, we get
\begin{align*}
&\mathbf{E}\biggl(\mathscr{Q}[\mathbf{X}]\int_0^t \int A_{k,\ell}(s,y)\, \dif y\, \dif s\biggr)\\
&=\mathbf{E}(\mathscr{Q}[\mathbf{X}] (\mathscr{I}_{t}^{\eps}[X_{k},X_{\ell}]- \mathscr{I}_{t}^{\eps}[X_{k}, \widetilde{X}_{\ell}]- \mathscr{I}_{t}^{\eps}[\widetilde{X}_{k}, X_{\ell}] \\
&\qquad \qquad - \mathscr{I}_{t}^{\eps}[\widetilde{X}_{k}, \widetilde{X}_{\ell}]+2 \mathscr{I}_{t}^{\eps}[\widetilde{X}_{k},\widetilde{\widetilde{X}}_{\ell}])).
\end{align*}
Since $\mathscr{I}_t^\eps$ is symmetric in its arguments, this shows that
\begin{align*}
&\sum_{k,\ell=1}^J\mathbf{E}\biggl(\mathscr{Q}[\mathbf{X}]\int_0^t \int A_{k,\ell}(s,y)\, \dif y\, \dif s\biggr)\\
&=\sum_{k,\ell=1}^J\mathbf{E}(\mathscr{Q}[\mathbf{X}] (\mathscr{I}_{t}^{\eps}[X_{k},X_{\ell}]- 2\mathscr{I}_{t}^{\eps}[X_{k}, \widetilde{X}_{\ell}] \\
&\qquad \qquad - \mathscr{I}_{t}^{\eps}[\widetilde{X}_{k}, \widetilde{X}_{\ell}]+2 \mathscr{I}_{t}^{\eps}[\widetilde{X}_{k},\widetilde{\widetilde{X}}_{\ell}])).
\end{align*}
If $k=\ell$, then \cref{eq:itsame} implies that $\mathscr{I}_{t}^{\eps}[X_{k},X_{\ell}] = \mathscr{I}_{t}^{\eps}[\widetilde{X}_{k}, \widetilde{X}_{\ell}]$. On the other hand, if $k\ne \ell$, then 
\[
\mathbf{E}(\mathscr{Q}[\mathbf{X}]\mathscr{I}_{t}^{\eps}[\widetilde{X}_{k}, \widetilde{X}_{\ell}]) = \mathbf{E}(\mathscr{Q}[\mathbf{X}]\mathscr{I}_{t}^{\eps}[\widetilde{X}_{k}, \widetilde{\widetilde{X}}_{\ell}]).
\]
This shows that 
\begin{align*}
&\sum_{k,\ell=1}^J\mathbf{E}\biggl(\mathscr{Q}[\mathbf{X}]\int_0^t \int A_{k,\ell}(s,y)\, \dif y\, \dif s\biggr)\\
&=\sum_{k,\ell=1}^J\mathbf{E}(\mathscr{Q}[\mathbf{X}] (\mathscr{I}_{t}^{\eps}[X_{k},X_{\ell}]\mathbf{1}_{k\ne \ell}- 2\mathscr{I}_{t}^{\eps}[X_{k}, \widetilde{X}_{\ell}] \\
&\qquad \qquad + (1+\mathbf{1}_{k=\ell}) \mathscr{I}_{t}^{\eps}[\widetilde{X}_{k},\widetilde{\widetilde{X}}_{\ell}])).
\end{align*}
The proof is now easily completed by combining the above calculations.
\end{proof}

\subsection{Brownian motion computations\label{subsec:BMcomps}}

We need two preliminary lemmas.
\begin{lem}\label{lem:heat}
Let 
\[
p_{t}(x)=\frac{1}{2\pi t}\sum_{z\in\mathbf{Z}^{2}}\e^{-\frac{|x+z|^{2}}{2t}}
\]
be the periodic heat kernel. Then there is a constant $C$ so that
\[
p_{t}(x)\le C(1+t^{-1})\e^{-\frac{1}{2t}|x|_{\mathbf{T}^{2}}^{2}}.
\]
\end{lem}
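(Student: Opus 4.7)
The plan is to work with the representative $x \in (-1/2,1/2]^2$, so that $|x|_{\mathbf{T}^2} = |x|$ is the ordinary Euclidean norm and the nearest lattice image is $z = 0$, contributing the principal term $\frac{1}{2\pi t} e^{-|x|^2/(2t)}$ to $p_t(x)$. The remaining work is to show that the contribution of every other lattice image is dominated by a constant times this principal term, with an extra Gaussian factor $e^{-c|z|^2/t}$ that is summable up to a factor of $C(1+t)$.

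First, for the eight ``close'' lattice points $z \in \{-1,0,1\}^2 \setminus \{0\}$, I would expand $|x+z|^2 = |x|^2 + 2x\cdot z + |z|^2$ and use the crude bound $|2x\cdot z| \le |z_1| + |z_2|$, which is valid since $|x_i| \le 1/2$. For integer $z_i$ one has $|z_i|^2 \ge |z_i|$, so $|x+z|^2 \ge |x|^2$ and each of these eight terms contributes at most $e^{-|x|^2/(2t)}$.

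Next, for the ``far'' lattice points $|z| \ge 2$, the triangle inequality gives $|x+z| \ge |z| - |x| \ge |z|/2$, using $|x| \le \sqrt{2}/2 \le |z|/2$. Hence
\[
|x+z|^2 \ge \tfrac{|z|^2}{4} = \tfrac{|z|^2}{8} + \tfrac{|z|^2}{8} \ge |x|^2 + \tfrac{|z|^2}{8},
\]
since $|z|^2/8 \ge 1/2 \ge |x|^2$. Thus $e^{-|x+z|^2/(2t)} \le e^{-|x|^2/(2t)} e^{-|z|^2/(16t)}$. A standard comparison with a Gaussian integral yields $\sum_{z\in\mathbf{Z}^2} e^{-|z|^2/(16t)} \le C(1+t)$ for some absolute constant $C$.

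Adding the contributions from $z = 0$, the eight close points, and the far points gives $\sum_{z \in \mathbf{Z}^2} e^{-|x+z|^2/(2t)} \le C'(1+t)e^{-|x|^2/(2t)}$, and dividing by $2\pi t$ yields $p_t(x) \le C''(1+t^{-1})e^{-|x|_{\mathbf{T}^2}^2/(2t)}$, which is the claim. There is no real obstacle; the only point requiring care is the choice of the split between close and far lattice points, arranged so that the elementary bound $|x+z| \ge |z|/2$ holds uniformly for the far ones and the factor $e^{-|x|^2/(2t)}$ can be extracted from every term.
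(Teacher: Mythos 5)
Your proof is correct, and it takes a genuinely different route from the paper's. The paper reduces matters to showing that $t^{-1}\sum_{z\ne 0}\e^{-|x+z|^2/(2t)}$ is bounded by an absolute constant for small $t$, using only the fact that every $z\ne 0$ satisfies $|x+z|\ge 1/2$ when $|x|=|x|_{\mathbf{T}^2}$, so that the factor $t^{-1}$ is absorbed by the super-exponential smallness of $\e^{-1/(8t)}$. As literally written, that reduction produces only the additive estimate $p_t(x)\le \tfrac{1}{2\pi t}\e^{-|x|^2/(2t)}+C$, which does not imply the multiplicative bound $p_t(x)\le C(1+t^{-1})\e^{-|x|^2/(2t)}$ uniformly as $t\downarrow 0$ (the right side can be far smaller than any fixed constant when $|x|^2$ is near $1/2$). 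Your decomposition sidesteps this: by splitting the lattice images into the nine points of $\{-1,0,1\}^2$, for which $|x+z|^2\ge|x|^2$ exactly because $|z_i|^2=|z_i|$ for $z_i\in\{-1,0,1\}$, and the far points $|z|\ge 2$, for which $|x+z|^2\ge|x|^2+|z|^2/8$, you extract the Gaussian factor $\e^{-|x|^2/(2t)}$ from every single term and obtain the multiplicative statement directly. The only minor remark is that, since $|x|=|x|_{\mathbf{T}^2}$ already forces $|x+z|\ge|x|$ for all $z\in\mathbf{Z}^2$, you could bound each of the eight close terms by $\e^{-|x|^2/(2t)}$ without the explicit expansion of $|x+z|^2$; the explicit check you give is fine, just not strictly needed. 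Your argument is complete and, in my view, the cleaner one for the lemma as stated.
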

\begin{proof}
 Without loss of generality, we may assume that $|x|_{\mathbf{T}^2} = |x|$. It is then sufficient to show that
\[ \sum_{z\in\mathbf{Z}^{2}\setminus\{0\}}\e^{-\frac{|x+z|^{2}}{2t}-\log t}\]
is bounded by a constant independent of $t$ and $x$. For $t\ge c$ the result is trivial, so we may assume that $t<c$ for some constant $c$ to be chosen later. We note that since $|x|=|x|_{\mathbf{T}^2}$, for all $z\in\mathbf{Z}^2\setminus\{0\}$ we have that $|z+x|\ge 1/2$, and so 
\[
 \frac{|x+z|^2}{2t}+\log t\ge  \frac{|x+z|^2}{t} \ge \frac{1}{c}|x+z|^2
\]
for $t$ sufficiently small. (Choose $c$ small enough so that this holds.) Then the result follows from the fact that
\[ 
\sum_{z\in\mathbf{Z}^{2}\setminus\{0\}}\e^{-\frac{1}{c}|x+z|^{2}}<\infty, \]
which is a simple exercise.
\end{proof}

 \begin{lem}\label{lem:rlemma}
There is an absolute constant $C$ such that for any $z\in \mathbf{T}^2$ and any $\eps<1/4$, 
\[
\left|\int R^{\eps}(w)\log|w-z|_{\mathbf{T}^{2}}^{-2}\,\dif w\right|\le C\left(1+\log|z|_{\mathbf{T}^{2}}^{-2}\right).
\]
\end{lem}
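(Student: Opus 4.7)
The plan is to split into two cases according to whether $|z|_{\mathbf{T}^2}$ is large or small compared to $\eps$. The key facts to exploit are: $R^\eps$ is a probability density on $\mathbf{T}^2$, it is supported in the set $\{w : |w|_{\mathbf{T}^2} \le \eps\}$ (since $\rho$ is supported in $(-1/2,1/2)^2$ so $\rho^\eps$ is supported in $(-\eps/2,\eps/2)^2$, hence $R^\eps = \rho^\eps * \rho^\eps$ in the support $(-\eps,\eps)^2$), and $\|R^\eps\|_{L^\infty} \le \|\rho^\eps\|_{L^1}\|\rho^\eps\|_{L^\infty} \le C\eps^{-2}$. I will also use the trivial observation that since $|w-z|_{\mathbf{T}^2} \le 1/\sqrt{2} < 1$, the integrand $\log|w-z|_{\mathbf{T}^2}^{-2}$ is always nonnegative, so the absolute value on the left-hand side is really just an upper bound.

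First, in the regime $|z|_{\mathbf{T}^2} \ge 2\eps$: for any $w$ in the support of $R^\eps$, the triangle inequality on the torus gives $|w-z|_{\mathbf{T}^2} \ge |z|_{\mathbf{T}^2} - |w|_{\mathbf{T}^2} \ge |z|_{\mathbf{T}^2}/2$. Therefore
\[
\int R^\eps(w)\log|w-z|_{\mathbf{T}^2}^{-2}\,\dif w \le \log(|z|_{\mathbf{T}^2}/2)^{-2} = \log|z|_{\mathbf{T}^2}^{-2} + 2\log 2,
\]
which is of the desired form.

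Second, in the regime $|z|_{\mathbf{T}^2} < 2\eps$: I use the $L^\infty$ bound on $R^\eps$ together with the local integrability of $\log$. Since $\eps < 1/4$, the set $\{w : |w|_{\mathbf{T}^2} \le \eps\}$ is identified with the Euclidean ball $B(0,\eps) \subset \mathbf{R}^2$, and for $w,z$ in $[-1/2,1/2]^2$ with $|z|<2\eps$, the vector $w-z$ lies in $B(0,3\eps)$ and satisfies $|w-z|_{\mathbf{T}^2} = |w-z|$. Thus, after the change of variables $u = w-z$,
\[
\int R^\eps(w)\log|w-z|_{\mathbf{T}^2}^{-2}\,\dif w \le C\eps^{-2}\int_{B(0,3\eps)}\log|u|^{-2}\,\dif u.
\]
The right-hand integral is computed via polar coordinates to be $O(\eps^2\log\eps^{-1})$, so the whole expression is bounded by $C\log\eps^{-1}$. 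Since $|z|_{\mathbf{T}^2} < 2\eps$ gives $\log\eps^{-1} \le \log|z|_{\mathbf{T}^2}^{-1} + \log 2$, this is bounded by $C(1 + \log|z|_{\mathbf{T}^2}^{-2})$, as desired.

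Combining the two cases yields the claim. The only genuine care needed — the main (mild) obstacle — is verifying that the torus distance coincides with the Euclidean distance on the small balls that appear, which is handled by the assumption $\eps < 1/4$ that ensures all the relevant balls stay within a single fundamental domain.
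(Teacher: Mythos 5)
Your proof is correct and takes essentially the same approach as the paper: split on whether $|z|_{\mathbf{T}^2}$ is comparable to $\eps$, use the compact support of $R^\eps$ to bound $\log|w-z|^{-2}$ pointwise when $|z|$ is large, and use the $L^\infty$ bound $\|R^\eps\|_{L^\infty}\lesssim\eps^{-2}$ together with the local integrability of $\log$ when $|z|$ is small. One small slip: the support of $R^\eps$ is contained in $(-\eps,\eps)^2$, hence in the Euclidean ball of radius $\sqrt{2}\eps$, not $\eps$; adjusting the threshold between the two cases (and the constant in the triangle-inequality step) accordingly repairs this without changing the argument.
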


\begin{proof}
Consider $R^\eps$ as a function on $\mathbf{R}^2$, by identifying $\mathbf{T}^2$ with $(-1/2,1/2)^2$ and defining $R^\eps$ to be zero outside this square. When $\eps< 1/4$, it is not difficult to see that  $R^\eps(x)=0$ for all $|x|>\sqrt{2}\eps$. Also, there is an absolute constant $c$ such that any $w$ with $|w|<\sqrt{2}/4$ satisfies $|w-z|_{\mathbf{T}^2} \ge c|w-z|$ for all $z\in \mathbf{T}^2$. Thus, when $\eps<1/4$, it suffices to show that
\begin{equation}
\int_{\mathbf{R}^{2}}R^{\eps}(w)\log|w-z|^{-2}\,\dif w\le C\left(1+\log|z|^{-2}\right).\label{eq:logbound}
\end{equation}
First, suppose that $|z|\le 4\eps$. In this situation, if $|w-z|> 6\eps$, then $|w|>2\eps$ and hence $R^\eps(w)=0$. Thus,
\begin{align}
\int R^{\eps}(w)\log|w-z|^{-2}\,\dif w & \le\|R^\eps\|_{L^{\infty}}\int_{\{|w-z|\le6\eps\}}\log|w-z|^{-2}\,\dif w\nonumber \\
 & \le C \eps^{-2}\int_{\{|w|\le6\eps\}}\log|w|^{-2}\,\dif w\nonumber \\
 & =C\eps^{-2}\int_{0}^{6\eps}(-2r\log r)\,\dif r\nonumber \\
 &\le C\log \eps^{-1}\le C(1+\log |z|^{-2}).\label{eq:littlez}
\end{align}
On the other hand, suppose that $|z|>4\eps$. In this situation, if $|w-z|\le |z|/2$, then $|w|\ge |z|-|w-z|\ge |z|/2>2\eps$, and hence $R(w)=0$. Thus, 
\begin{align}
\int R^{\eps}(w)\log|w-z|^{-2}\,\dif w & \le\int R^{\eps}(w)\log(|z|/2)^{-2}\,\dif w \nonumber\\
&\le C(1+\log|z|^{-2}).\label{eq:bigz}
\end{align}
The two bounds \cref{eq:littlez} and \cref{eq:bigz} together imply \cref{eq:logbound}.
\end{proof}

\begin{proof}[Proof of Lemma \ref{lem:logupperbound}.]
Define the integration domain 
\[
\Xi^{r}(t)=\{(s_1,\ldots,s_r):0\le  s_{1}\le\cdots\le s_{r}\le t\}
\]
and put $Z=X-Y$. Note that
\begin{align*}
&\mathbb{E}_{X^{t,x}}\mathscr{I}_{t}^{\eps}[X,Y]^{r}=\mathbb{E}_{X^{t,x}}\biggl[\int_{[0,t]^{r}}\prod_{i=1}^{r}R^{\eps}(Z(s_{i}))\,\dif s_{1}\cdots\dif s_{r}\biggr]\\
&=r!\int_{\Xi^{r}(t)}\mathbb{E}_{X^{t,x}}\biggl(\prod_{i=1}^{r}R^{\eps}(Z(s_{i}))\,\dif s_{1}\cdots\dif s_{r}\biggr)\\
&=r!\int_{\Xi^{r}(t)}\mathbb{E}_{X^{t,x}}\biggl(\prod_{i=1}^{r}R^{\eps}(X(s_{i})-X(s_{i+1})+X(s_{i+1})- Y(s_i))\biggr)\,\dif s_{1}\cdots\dif s_{r},
\end{align*}
where we set $s_{r+1} = t$. 
Recalling that $X$ is a Brownian motion running backward in time, and the independent increments property of Brownian motion, it is easy to show by backward induction that the last integral is bounded by
\begin{align*}
&\int_{\Xi^{r}(t)}\max_{z_{1},\ldots,z_{r}\in\mathbf{T}^{2}}\prod_{i=1}^{r}\mathbb{E}_{X^{t,x}}[R^{\eps}(X(s_{i})-X(s_{i+1})+z_i)]\,\dif s_{1}\cdots\dif s_{r}.
\end{align*}
In the following we will use the notations $a\wedge b$ and $a\vee b$ to denote the minimum and maximum of $a$ and $b$, respectively. By \cref{lem:heat}, 
\begin{align}
&\mathbb{E}_{X^{t,x}}[R^{\eps}(X(s_{i})-X(s_{i+1})+z)]=\int R^{\eps}(w+z)p_{s_{i+1}-s_i}(w)\,\dif w\notag\\
&\le (\|R^\eps\|_{L^1}\|p_{s_{i+1}-s_i}\|_{L^\infty})\wedge (\|R^\eps\|_{L^\infty}\|p_{s_{i+1}-s_i}\|_{L^1})\notag\\
&\le C\left(1+\frac{1}{s_{i}-s_{i-1}}\right)\wedge(\|\rho^\eps\|_{L^1}\|\rho^\eps\|_{L^\infty})\notag\\
&\le\frac{C}{((s_{i}-s_{i-1})\wedge1)\vee \eps^2}.\label{eq:ER}
\end{align}
Thus there is a constant $C$ so that, if we define the integration
domain 
\[
\Delta^{r}(t)=\{(s_{1},\ldots,s_{r})\in\mathbf{R}_{\ge 0}^r: s_{1}+\cdots+s_{r}\le t\},
\]
then
\begin{align}
\mathbb{E}_{X^{t,x}}\mathscr{I}_{t}^{\eps}[X,Y]^{r}&\le C^{r}r!\int_{\Xi^{r}(t)}\prod_{i=1}^{r}\frac{1}{((s_{i}-s_{i-1})\wedge1)\vee\eps^{2}}\,\dif s_{1}\cdots\dif s_{r}\notag\\
&=C^{r}r!\int_{\Delta^{r}(t)}\prod_{i=1}^{r}\frac{1}{(u_{i}\wedge1)\vee\eps^{2}}\,\dif u_{1}\cdots\dif u_{r}\notag\\
&\le C^{r}r!\left(\int_{0}^{t}\frac{1}{(u\wedge1)\vee\eps^{2}}\,\dif u \right)^{r}\notag\\
&\le C^{r}r!\left(t+\log\eps^{-2}\right)^{r}.\label{eq:calc}
\end{align}
The statement of the lemma follows by the assumption that $\log\eps^{-2}\ge t$.
\end{proof}
\begin{proof}[Proof of Lemma \ref{lem:X0Yest}.]
We first prove \cref{eq:IX0Ymoments} for $r=1$. In this case we have,
using \cref{lem:heat},
\begin{align}
&\mathbb{E}_{X_{1}^{t,x_{1}},X_{2}^{t,x_{2}}}\biggl(\int_{0}^{t}R^{\eps}(X_{1}(s)-X_{2}(s))\,\dif s\biggr)\notag\\
&=\int_{0}^{t}\int R^{\eps}(w)p_{2(t-s)}(w-(x_{1}-x_{2}))\,\dif w\,\dif s\notag\\
&\le C\int R^{\eps}(w)\int_{0}^{t}\biggl(1+\frac{1}{2(t-s)}\biggr)\e^{-\frac{1}{4(t-s)}|w-(x_{1}-x_{2})|_{\mathbf{T}^{2}}^{2}}\,\dif s\,\dif w.\notag
\end{align}
Applying a change of variable to the inner integral, and noting that $\|R^\eps\|_{L^1} = \|\rho^\eps\|_{L^1}^2=\|\rho\|_{L^1}^2$ does not depend on $\eps$, we see  that the above quantity is bounded by
\begin{align*}
&C\left(t+\int R^{\eps}(w)\int_{0}^{t|w-(x_{1}-x_{2})|_{\mathbf{T}^{2}}^{-2}}\frac{1}{2s}\e^{-\frac{1}{4s}}\,\dif s\,\dif w\right)\\
&\le C\left(t+\int R^{\eps}(w)\int_{0}^{(t+1)|w-(x_{1}-x_{2})|_{\mathbf{T}^{2}}^{-2}}\frac{1}{2s}\e^{-\frac{1}{4s}}\,\dif s\,\dif w\right)\\
&\le C\left(t+\log (t+1) + \int R^{\eps}(w)\log|w-(x_{1}-x_{2})|_{\mathbf{T}^{2}}^{-2}\,\dif w\right)\\
&\le  C\left(t+1 + \int R^{\eps}(w)\log|w-(x_{1}-x_{2})|_{\mathbf{T}^{2}}^{-2}\,\dif w\right)
\end{align*}
Thus, by \cref{lem:rlemma},
\begin{equation}
\mathbb{E}_{X_{1}^{t,x_{1}},X_{2}^{t,x_{2}}}\biggl(\int_{0}^{t}R^{\eps}(X_{1}(s)-X_{2}(s))\,\dif s\biggr)
\le C(t+1 + \log|x_{1}-x_{2}|_{\mathbf{T}^{2}}^{-2}).\label{eq:farapartr1}
\end{equation}
Now we can estimate the general case. Let $Z=X_{1}-X_{2}$ and abbreviate
$\mathbb{E}=\mathbb{E}_{X_{1}^{t,x_{1}},X_{2}^{t,x_{2}}}$. Then 
\begin{align*}
 & \mathbb{E}\left(\int_{0}^{t}R^{\eps}(Z(s))\,\dif s\right)^{r}\\
 &=\int_{[0,t]^{r}}\mathbb{E}\biggl[\prod_{i=1}^{r}R^{\eps}(Z(s_{i}))\biggr]\,\dif s_{1}\cdots\,\dif s_{r}\\
 & =r!\int_{\Xi^{r}(t)}\mathbb{E}\biggl[\prod_{i=1}^{r}R^{\eps}(Z(s_{i})-Z(s_{i+1})+Z(s_{i+1}))\biggr]\,\dif s_{1}\cdots\,\dif s_{r},
 \end{align*}
 where, as before, we use the convention $s_{r+1}=t$. 
By the independent increments property of Brownian paths (flowing backward in time) and the bounds \cref{eq:ER}, \cref{eq:calc} and \cref{eq:farapartr1}, the last integral is bounded by 
\begin{align*}
 & \int_{\Xi^{r}(t)}\max_{\substack{z_1,\ldots,z_{r-1}\in\mathbf{T}^{2}\\ z_r = x_1-x_2}}\prod_{i=1}^{r}\mathbb{E}[R^{\eps}(Z(s_{i})-Z(s_{i+1})+z_i)]\,\dif s_{1}\cdots\,\dif s_{r}\\
 & \le C^{r}\int_{\Xi^{r}(t)}\left(t+1+\log|x_{1}-x_{2}|_{\mathbf{T}^{2}}^{-2}\right)\\
 &\qquad \qquad \cdot\prod_{i=1}^{r-1}(((s_{i}-s_{i-1})\wedge1)\vee\eps^{2})^{-1}\,\dif s_{1}\cdots\,\dif s_{r}\\
 & \le C^{r}(t+1+\log|x_{1}-x_{2}|_{\mathbf{T}^{2}}^{-2})(t+\log\eps^{-2})^{r-1}.
\end{align*}
The assumption that $t\le\log\eps^{-2}$ completes the argument.
\end{proof}

\begin{proof}[Proof of Lemma \ref{lem:Mp}.]
Let $\Xi^{2p} = \Xi^{2p}(1)$, where $\Xi^{2p}(1)$ is defined as in the proof of \cref{lem:logupperbound} above. Similarly, let $\Delta^{2p} = \Delta^{2p}(1)$. Let $Z_{1},\ldots,Z_{m}$ be i.i.d.~standard Gaussian random variables, and let 
\[
Q_{2p}=\biggl\{\alpha\in\{-1,1\}^{2p}:\sum_{j=1}^{2p}\alpha_{j}=0\biggr\}.
\] 
Using \cref{eq:brownian}, we can expand and integrate $M_{k,\ell;2p}$ as
\begin{align}
M_{k,\ell;2p} & = (2p)!\mathbb{E}_{X^{1,0}}\biggl[\int\int_{\Xi^{2p}}\sum_{\alpha\in Q_{2p}}\prod_{n=1}^{2p}\exp\{2\pi\alpha_{n}\ii(\ell s_{n}\notag \\
&\qquad \qquad \qquad \qquad +k\cdot X(s_{n})+2\pi k\cdot x)\}\,\dif s_{1}\cdots\dif s_{2p}\,\dif x\biggr]\nonumber \\
 & =(2p)!\int_{\Xi^{2p}}\sum_{\substack{\alpha\in Q_{2p}}
}\mathbf{E}\prod_{n=1}^{2p}\exp\biggl\{ 2\pi\alpha_{n}\ii\biggl(\ell s_{n}\notag\\
&\qquad \qquad \qquad \qquad +|k|\sum_{m=1}^{n}\sqrt{s_{m}-s_{m-1}}Z_{m}\biggr)\biggr\} \,\dif s_{1}\cdots\dif s_{2p}.\label{eq:M2Xi}
\end{align}
Put $t_{m}=s_{m}-s_{m-1}$. Then the expectation in \cref{eq:M2Xi}
is
\begin{align}
&\mathbf{E}\prod_{n=1}^{2p}\exp\left\{ 2\pi\alpha_{n}\ii\left(\ell s_{n}+|k|\sum_{m=1}^{n}\sqrt{t_{m}}Z_{m}\right)\right\}  \notag\\
& =\exp\left\{ 2\pi\ii\ell\sum_{m=1}^{2p}\widetilde{\alpha}_{m}t_{m}\right\} \prod_{m=1}^{2p}\mathbf{E}\exp\left\{ 2\pi\ii|k|\widetilde{\alpha}_{m}\sqrt{t_{m}}Z_{m}\right\} \nonumber \\
 & =\exp\left\{ 2\pi\sum_{m=1}^{2p}\left(\ii\ell\widetilde{\alpha}_{m}-\pi|k|^{2}\widetilde{\alpha}_{m}^{2}\right)t_{m}\right\} ,\label{eq:Mp2}
\end{align}
where we use the notation 
$
\widetilde{\alpha}_{m}=\sum_{n=m}^{2p}\alpha_{n}.\label{eq:atildedef}
$
 Substituting \cref{eq:Mp2} into \cref{eq:M2Xi}, 
 we have
\begin{align}
M_{k,\ell;2p} & =(2p)!\int_{\Xi^{2p}}\sum_{\alpha\in Q_{2p}}\exp\biggl\{ 2\pi\sum_{m=1}^{2p}(\ii\ell\widetilde{\alpha}_{m}-\pi|k|^{2}\widetilde{\alpha}_{m}^{2})\notag \\
&\qquad \qquad \qquad \qquad \qquad \cdot(s_{m}-s_{m-1})\biggr\} \,\dif s_{1}\cdots\dif s_{2p}\notag \\
&= (2p)!\int_{\Delta^{2p}}\sum_{\alpha\in Q_{2p}}\exp\biggl\{ 2\pi\sum_{m=1}^{2p}(\ii\ell\widetilde{\alpha}_{m}
-\pi|k|^{2}\widetilde{\alpha}_{m}^{2})t_{m}\biggr\} \,\dif t_{1}\cdots\dif t_{2p}.\label{eq:M2plastequality}
\end{align}
Now define a modified integration domain
\[
B(\alpha)=\bigtimes_{m=1}^{2p}B_{m}(\alpha),
\]
where
\[
B_{m}(\alpha)=\begin{cases}
[0,1] & \widetilde{\alpha}_{m}=0\\{}
[0,\infty) & \text{otherwise.}
\end{cases}
\]
Noting that $\Delta^{2p}\subset B(\alpha)$ for each $\alpha$, we 
estimate
\begin{align*}
|M_{k,\ell;2p}|&\le(2p)!\sum_{\alpha\in Q_{2p}}\int_{B(\alpha)}\exp\biggl\{ -2\pi^2|k|^{2}\sum_{m=1}^{2p}\widetilde{\alpha}_{m}^{2}t_{m}\biggr\} \,\dif t_{1}\cdots\dif t_{2p}\\
&= (2p)! \sum_{\alpha\in Q_{2p}} \prod_{m\, :\, \widetilde{\alpha}_m \ne 0}  \frac{1}{2\pi^2\widetilde{\alpha}_m^2|k|^2}.
\end{align*}
Since $|\widetilde{\alpha}_{m}-\widetilde{\alpha}_{m-1}|=1$
for each $m$, we must have that $|\{m:\widetilde{\alpha}_{m}=0\}|\le p$. Moreover, each $\widetilde{\alpha}_m$ is an integer. Therefore,  we get
\begin{align*}
|M_{k,\ell;2p}|  &\le(2p)!|Q_{2p}|(2\pi^2|k|^2)^{-p} = (2p)!\binom{2p}{p}(2\pi^2|k|^2)^{-p},
\end{align*}
which proves \cref{eq:upperbound}. Now we prove \cref{eq:secondmoment}. By \cref{eq:M2plastequality} applied
when $p=1$, we have (noting that $Q_2$ has only two elements),
\begin{align}
M_{2}  &=2\int_{\Delta^{2}}(\exp\{ 2\pi(\ii\ell-\pi|k|^{2})t_{2}\} 
+\exp\{ 2\pi(-\ii\ell-\pi|k|^{2})t_{2}\})\,\dif t_{1}\,\dif t_{2}\nonumber \\
 & =2\int_{0}^{1}(1-t)(\exp\{ 2\pi(\ii\ell-\pi|k|^{2})t\} 
+\exp\{ 2\pi(-\ii\ell-\pi|k|^{2})t\} )\,\dif t.\label{eq:M2decomp}
\end{align}
Now note that
\begin{align}
&\int_{0}^{1}(\exp\{ 2\pi(\ii\ell-\pi|k|^{2})t\} +\exp\{ 2\pi(-\ii\ell-\pi|k|^{2})t\} )\,\dif t\notag\\
&=\frac{1}{2\pi}\biggl(\frac{1-\e^{-\ii\ell-\pi|k|^{2}}}{\ii\ell+\pi|k|^{2}}+\frac{1-\e^{\ii\ell-\pi|k|^{2}}}{-\ii\ell+\pi|k|^{2}}\biggr)\notag\\
&= \frac{|k|^{2}}{\ell^{2}+\pi^{2}|k|^{4}}-\frac{1}{2\pi}\biggl(\frac{\e^{-\ii\ell}}{\ii\ell+\pi|k|^{2}}+\frac{\e^{\ii\ell}}{-\ii\ell+\pi|k|^{2}}\biggr)e^{-\pi|k|^2}.\label{eq:M2UB}
\end{align}
We further have
\begin{align}
\biggl|\int_{0}^{1}t\exp\{ 2\pi(\pm\ii\ell-\pi|k|^{2})t\} \,\dif t\biggr|&\le\int_{0}^{\infty}t\exp\{ -2\pi^{2}|k|^{2}t\} \,\dif t
=\frac{1}{4\pi^{4}|k|^{4}}.\label{eq:M2LB}
\end{align}
Combining \cref{eq:M2decomp}, \cref{eq:M2UB} and \cref{eq:M2LB}, and recalling that $M_2\ge0$, we get
\begin{align*}
M_2 &= |M_2|\\
&\ge \frac{|k|^{2}}{\ell^{2}+\pi^{2}|k|^{4}}-\biggl|\frac{1}{2\pi}\biggl(\frac{\e^{-\ii\ell}}{\ii\ell+\pi|k|^{2}}+\frac{\e^{\ii\ell}}{-\ii\ell+\pi|k|^{2}}\biggr)e^{-\pi|k|^2}\biggr|-\frac{1}{4\pi^{4}|k|^{4}}\\
&\ge \frac{|k|^{2}}{\ell^{2}+\pi^{2}|k|^{4}}-e^{-\pi|k|^2}-\frac{1}{4\pi^{4}|k|^{4}}.
\end{align*}
Since $|\ell|\le |k|^2$, this proves \cref{eq:secondmoment}.
\end{proof}

\subsection{Proof of \cref{lem:nontrivialitycondition}\label{subsec:prob-proofs}}
Suppose for the sake of contradiction that $A_{n}+B_{n}$ converges
in probability to $0$. Then for each $\delta,\eta>0$ we have an
$n$ so that
\[
\mathbf{P}(|A_{n}+B_{n}|\ge\eta)<\delta.
\]
Note that
\begin{align}
0&=  \mathbf{E}A_{n}B_{n}\notag\\
&=\mathbf{E}A_{n}B_{n}\mathbf{1}\{|A_{n}+B_{n}|<\eta\}+\mathbf{E}A_{n}B_{n}\mathbf{1}\{|A_{n}+B_{n}|\ge\eta\}.\label{eq:splitupcov}
\end{align}
Now choose 
\[
\alpha=\frac{1}{1/p+1/2},\qquad\beta=\frac{\alpha}{\alpha-1},\qquad r=\frac{p}{\alpha},\qquad q=\frac{2}{\alpha},
\]
so that $(\alpha,\beta,r,q)\in(1,\infty)$, $1/\alpha+1/\beta=1/r+1/q=1$,
$r\alpha=p$, and $q\alpha=2$. By the H\"older and Young inequalities,
we then have
\begin{multline*}
\left|\mathbf{E}A_{n}B_{n}\mathbf{1}\{|A_{n}+B_{n}|\ge\eta\}\right|\le\delta^{1/\beta}(\mathbf{E}|A_{n}|^{\alpha}|B_{n}|^{\alpha})^{1/\alpha}\\
\le\delta^{1/\beta}\left(\frac{\mathbf{E}|A_{n}|^{r\alpha}}{r}+\frac{\mathbf{E}|B_{n}|^{q\alpha}}{q}\right)^{1/\alpha}
=\delta^{1/\beta}\left(\frac{\mathbf{E}|A_{n}|^{p}}{r}+\frac{\mathbf{E}|B_{n}|^{2}}{q}\right)^{1/\alpha}\\\le\delta^{1/\beta}C^{1/\alpha}.
\end{multline*}
Thus, by \cref{eq:splitupcov}, we get
\begin{equation}
\left|\mathbf{E}A_{n}B_{n}\mathbf{1}\{|A_{n}+B_{n}|<\eta\}\right|\le\delta^{1/\beta}C^{1/\alpha}.\label{eq:covUB}
\end{equation}
On the other hand, we have
\begin{align*}
&|\mathbf{E}A_{n}B_{n}\mathbf{1}\{|A_{n}+B_{n}|<\eta\}|\\
&\ge |\mathbf{E}(-A_n^2\mathbf{1}\{|A_{n}+B_{n}|<\eta\})| - |\mathbf{E}A_{n}(A_n+B_{n})\mathbf{1}\{|A_{n}+B_{n}|<\eta\}|\\
&\ge \mathbf{E}(A_n^2) - \mathbf{E}(A_n^2\mathbf{1}\{|A_{n}+B_{n}|\ge\eta\}) - \eta \mathbf{E}|A_n|.
\end{align*}
By H\"older's inequality,
\begin{align*}
\mathbf{E}(A_n^2\mathbf{1}\{|A_{n}+B_{n}|\ge\eta\}) &\le (\mathbf{E}|A_n|^p)^{2/p}(\mathbf{P}(|A_n+B_n|\ge \eta))^{(p-2)/p}\\
&\le C^{2/p}\delta^{(p-2)/p}.
\end{align*}
Also, $\mathbf{E}|A_n|\le C^{1/p}$. Combining the last three displays, we get
\begin{align}
|\mathbf{E}A_{n}B_{n}\mathbf{1}\{|A_{n}+B_{n}|<\eta\}| &\ge \mathbf{E}(A_{n}^{2}) - C^{2/p}\delta^{(p-2)/p} - \eta C^{1/p}.\label{eq:covLB}
\end{align} 
But, combining \cref{eq:covUB}
and \cref{eq:covLB}, we get
\[
\delta^{1/\beta}C^{1/\alpha}\ge c- C^{2/p}\delta^{(p-2)/p} - \eta C^{1/p},
\]
which is absurd once we choose $\eta$ and $\delta$ sufficiently small.
(Note that we can do this since $\alpha$ and $\beta$ depend only
on $p$ and not on $\eta$ or $\delta$.)

\end{document}